\documentclass[a4paper,11pt,reqno]{amsart}

\usepackage{amsmath,amssymb,amsthm}
\usepackage{enumitem}
\usepackage{xcolor}
\usepackage[margin=30mm]{geometry}
\usepackage[colorlinks=true,linkcolor=blue!55!black,citecolor=blue!55!black,urlcolor=blue!55!black]{hyperref}

\newtheorem{theorem}{Theorem}[section]
\newtheorem{proposition}[theorem]{Proposition}
\newtheorem{lemma}[theorem]{Lemma}
\newtheorem{corollary}[theorem]{Corollary}
\newtheorem{theoremA}{Theorem}

\newtheorem{theoremB}{Theorem}

\theoremstyle{definition}
\newtheorem{question}[theorem]{Question}
\newtheorem{example}[theorem]{Example}
\theoremstyle{remark}
\newtheorem{remark}[theorem]{Remark}

\newcommand{\m}{\mathfrak m}
\newcommand{\calR}{\mathcal R}
\newcommand{\gr}{\operatorname{gr}}
\newcommand{\ord}{\operatorname{ord}}
\newcommand{\Ap}{\operatorname{Ap}}
\newcommand{\ol}[1]{\overline{#1}}
\newcommand{\defword}[1]{\textit{#1}}

\newcommand{\Ker}{\operatorname{Ker}}
\newcommand{\fka}{\mathfrak{a}}
\newcommand{\fkp}{\mathfrak{p}}
\newcommand{\fkm}{\mathfrak{m}}

\newcommand{\Spec}{\operatorname{Spec}}
\newcommand{\embdim}{\operatorname{edim}}
\newcommand{\mult}{\operatorname{mult}}
\newcommand{\Fr}{\operatorname{F}}
\newcommand{\ch}{\operatorname{char}}

\title[Normality of ideals beyond the standard graded setting]
{Normality of ideals beyond the standard graded setting: families from numerical semigroup rings}
\author{Naoyuki Matsuoka}
\date{}
\dedicatory{Dedicated to the memory of Professor Shiro Goto.}
\subjclass[2020]{Primary 13B22; Secondary 13A30, 20M14.}
\thanks{The author was partially supported by JSPS KAKENHI Grant Number JP25K06941.}
\thanks{
The author used OpenAI's ChatGPT Work (5.6 Sol) to assist in finding explicit reductions and one-variable homomorphisms used in the coefficient-separation arguments, and in drafting and revising parts of the English exposition. The overall proof strategy was developed by the author. All AI-assisted suggestions and calculations were independently verified by the author, who takes full responsibility for the contents of this paper.
}

\begin{document}

\begin{abstract}
Let $k$ be an arbitrary field and let $S=k[x,y,z]$ be the polynomial ring with three variables $x,y,z$.  We study integrally
closed $(x,y,z)$-primary ideals of $S$ that are homogeneous for a positive
weighted grading but need not be homogeneous for the standard grading.  From
a numerical semigroup $H$ of embedding dimension three, we obtain such ideals
as inverse images
$I_h=\varphi_H^{-1}(t^hk[t]\cap k[H])$.  If $\ell$ is the least degree of a
defining relation of $k[H]$, then $I_\ell$ is monomial, whereas
$I_{\ell+1}$ has a binomial generator in the cases considered here.  We
determine $I_{\ell+1}$ for numerical semigroups of embedding dimension three
and multiplicity three or four.  The six-generated cases arising in
multiplicity three and in the symmetric multiplicity-four case form two
explicit families.  For every ideal $I$ in these families, we prove that its
Rees algebra is a Cohen--Macaulay normal domain.  In the non-symmetric
multiplicity-four case, $I_{\ell+1}$ is seven-generated; for
$H=\langle4,9,15\rangle$, we prove that its Rees algebra is again a
Cohen--Macaulay normal domain.
\end{abstract}

\maketitle
\tableofcontents

\section{Introduction}
\label{sec:introduction}

An ideal is said to be integrally closed if it coincides with its integral
closure, whereas it is said to be normal if all its positive powers are
integrally closed.  Thus an integrally closed ideal need not be normal.  Since
the integral closedness of an ideal alone provides no general procedure for
controlling its higher powers, it is natural to ask what restrictions on the
number and form of the generators ensure that an integrally closed ideal
is normal.  When the ambient ring is a normal domain, the normalization of the
Rees algebra of $I$ is $\bigoplus_{n\geq0}\ol{I^n}T^n$.  Hence normality of
$I$ is equivalent to normality of its Rees algebra, so this question is also
naturally connected with the study of normal blowup algebras.

Results organized by the number of generators provide a natural point of
comparison.  Let $(R,\m)$ be a regular local ring of dimension $d$, let
$I$ be an integrally closed $\m$-primary ideal, and write $\mu_R(I)$ for the
minimal number of generators of $I$.  Goto \cite{Goto1987} proved that the Rees
algebra of $I$ is a Cohen--Macaulay normal domain when $\mu_R(I)=d$, that is,
when $I$ is a complete intersection
\cite[Corollary~1.3]{Goto1987}.  Ciuperc\u{a} \cite{Ciuperca2006} obtained the same conclusion when
$\mu_R(I)=d+1$
\cite[Theorem~1.1]{Ciuperca2006}.  Endo, Goto, Hong, and Ulrich \cite{EndoGotoHongUlrich2026} extended these
results by proving the conclusion whenever $\mu_R(I)\leq d+2$
\cite[Theorem~2.2 and Corollary~2.4]{EndoGotoHongUlrich2026}.  They further proved that, over a field of characteristic zero, the Rees algebra of an
integrally closed zero-dimensional ideal generated by $d+3$ homogeneous
polynomials in the standard graded polynomial ring $k[x_1,x_2,\ldots,x_d]$ is a
Cohen--Macaulay normal domain
\cite[Theorem~3.4]{EndoGotoHongUlrich2026}.  Thus, in three variables, their
results cover every integrally closed zero-dimensional ideal with at most five
minimal generators, and also cover six-generated ideals in the standard
graded case in characteristic zero.

For monomial ideals in three variables, Ataka and the author
\cite{AtakaMatsuoka2026} proved that, over an arbitrary field, every
integrally closed zero-dimensional monomial ideal with at most seven minimal
generators is normal \cite[Theorem~3.1]{AtakaMatsuoka2026}.  
This bound is sharp: the integral closure \(\overline{(x^7,y^3,z^2)}\) is 
minimally generated by eight monomials but is not normal; 
see \cite[Exercise 1.14]{SwansonHuneke2006} and \cite[Example 4.5]{AtakaMatsuoka2026}. 
We revisit this ideal in Example \ref{ex:successive-cutoffs-eight-generators}, where it occurs as the next cutoff after a normal ideal.
Together with the standard facts recalled in the next paragraph, the theorem of
Ataka--Matsuoka also implies that the corresponding Rees algebra is a
Cohen--Macaulay normal domain.  Consequently, among six-generated ideals in
three variables, the first case not covered by the results above is the
non-monomial case outside the standard graded setting; in positive characteristic, even the
standard graded nonmonomial case lies outside the theorem of
Endo--Goto--Hong--Ulrich.

The monomial case has two particularly powerful features.  First, a theorem
of Reid, Roberts, and Vitulli states that a monomial ideal in $d$ variables is
normal once its first $d-1$ positive powers are integrally closed
\cite[Proposition~3.1]{ReidRobertsVitulli2003}.  In particular, for an
integrally closed monomial ideal in three variables, it is enough to verify
the integral closedness of its square.  Second, the Rees algebra of a monomial
ideal is an affine semigroup ring.  Once the ideal is normal, its Rees algebra
is therefore a normal affine semigroup ring and is Cohen--Macaulay by
Hochster's theorem \cite{Hochster1972}.  Neither result applies directly to an
ideal containing a binomial generator.  In particular, the finite criterion
of Reid--Roberts--Vitulli does not reduce the problem studied here to the
integral closedness of the square.  Moreover, when the relevant grading is not
standard, the theorem of Endo--Goto--Hong--Ulrich does not apply either.  Thus,
outside both the monomial and standard graded settings, the preceding results
provide neither a finite criterion for normality nor a direct conclusion that
the Rees algebra is normal.

These results suggest the following question.

\begin{question}\label{que:seven-generators}
Let $k$ be an arbitrary field, let $S=k[x,y,z]$, and let $I$ be an integrally
closed $(x,y,z)$-primary ideal that is homogeneous with respect to some
positive weighted grading of $S$.  If $\mu_S(I)\leq7$, must $I$ be normal?
\end{question}

The purpose of this paper is to study a natural class of six-generated
integrally closed ideals that lies just beyond these monomial and standard
graded settings.  Recall that a numerical semigroup is a submonoid
$H$ of $\mathbb N$ such that $\mathbb N\setminus H$ is finite. Here, $\mathbb{N}$ denotes the set of all non-negative integers.
Its multiplicity $\mult(H)$ is its least positive element, and its embedding
dimension is the cardinality of its minimal system of generators.  Writing
$\Fr(H)=\max(\mathbb N\setminus H)$, called the Frobenius number of $H$, we say that $H$ is symmetric if
$a \in H$ if and only if $\Fr(H)-a \notin H$ for every integer $a$.  We
determine the ideals arising from the numerical semigroup rings $k[H]$ when
$H$ has embedding dimension three and multiplicity three or four.  In the
multiplicity-three and symmetric multiplicity-four cases, apart from the
cases obtained by adjoining one variable to an ideal in a two-variable
polynomial ring, the resulting six-generated ideals form two explicit
families.  We give an affirmative answer to
Question~\ref{que:seven-generators} for these two families by proving that
their Rees algebras are Cohen--Macaulay normal domains.

We first state the main result directly in terms of the resulting ideals.
Their numerical-semigroup origin is explained after Theorem~\ref{thm:main}.  Let $k$ be
an arbitrary field, let $S=k[x,y,z]$, and put $\m=(x,y,z)$.  Consider either
of the following ideals:
\begin{equation*}\tag{$\sharp$}\label{eq:family-one}
\begin{aligned}
 I={}&(y^2-x^{b-d}z,\ x^{2b+2-e},\ x^{b+1}y,\
        x^{b-d+1}z,\ yz,\ z^2),\\
 &\hspace{35mm} b>d\geq0,\qquad e\in\{0,1\},
\end{aligned}
\end{equation*}
or
\begin{equation*}\tag{$\sharp\sharp$}\label{eq:family-two}
\begin{aligned}
 I={}&(y^2-x^{2b+1},\ x^{2b+2},\ x^{b+1}y,\ x^rz,\ yz,\ z^2),\\
 &\hspace{35mm} b\geq1,\qquad 1\leq r\leq b+1.
\end{aligned}
\end{equation*}
Each of these ideals is homogeneous with respect to a suitable positive
grading.  For example, one may take $\deg x=3$, $\deg y=3b+2-e$, and
$\deg z=3b+3d+4-2e$ for \eqref{eq:family-one}, while one may take
$\deg x=4$, $\deg y=4b+2$, and any positive value of $\deg z$ for
\eqref{eq:family-two}.  Each ideal is $\m$-primary and minimally generated
by six elements.  By
Theorem~\ref{thm:classification}(4), every ideal of either form arises from
the numerical-semigroup construction explained after Theorem~\ref{thm:main}
and is therefore integrally closed.

\begin{theoremA}\label{thm:main}
For every ideal $I$ of either form \eqref{eq:family-one} or
\eqref{eq:family-two}, the Rees algebra of $I$ is a Cohen--Macaulay normal
domain.
\end{theoremA}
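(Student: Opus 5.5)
A natural route goes through a reduction $J\subseteq I$ generated by three elements with reduction number one, $I^2=JI$. For an $\m$-primary ideal in the three-dimensional regular ring $S$ (or its localization), $I^2=JI$ gives $\gr_I(S)$ Cohen--Macaulay by the Valabrega--Valla / Goto--Shimoda criterion. Then $\calR(I)$ is Cohen--Macaulay, since $a(\gr_I(S))<0$ when the reduction number is less than $d=3$. Normality will be proved separately, and since $S$ is a normal domain it suffices to show $I^n=\ol{I^n}$ for all $n$.

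\textbf{Step 1 (reductions).} For family \eqref{eq:family-one} I would try
\[
J=\bigl(y^2-x^{b-d}z+\alpha z^2,\ x^{2b+2-e}+\beta yz,\ z^2+\gamma x^{b+1}y\bigr)
\]
or a similar combination; the aim is that $J$ is $\m$-primary and its $I$-degree-one part generates $I^2$ modulo $JI$. Family \eqref{eq:family-two} would be handled analogously, for example with $J=(y^2-x^{2b+1}+z^2,\ x^{2b+2},\ x^{r}z+x^{b+1}y)$ or a variant. We must check $I^2=JI$ by listing the 21 products of generators and reducing each one modulo $JI$. Since $k$ is arbitrary, the coefficients must avoid characteristic-dependent cancellations; I would choose them to be $0$ or $\pm1$ and verify that $J$ is $\m$-primary by computing its radical directly. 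This is where explicit, weight-homogeneous reductions are needed. If no single reduction works uniformly, I would split into cases, such as $e=0$ versus $e=1$, $d=0$ versus $d>0$, and $r\le b$ versus $r=b+1$.

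\textbf{Step 2 (normality).} With $\calR(I)$ Cohen--Macaulay, it satisfies $S_2$, so normality reduces to $R_1$. Equivalently, by Serre's criterion it suffices to show that $\calR(I)_P$ is a DVR at each height one prime $P$ containing $It$, or that $\gr_I(S)$ is reduced at its minimal primes. The minimal primes of $\gr_I(S)$ correspond to Rees valuations of $I$. Passing to the fiber $\gr_I(S)/\m\gr_I(S)$ and to the coordinate localizations $S[I/f]$ for the monomial generators $f$ (for instance $f=z^2$ and $f=x^{2b+2-e}$), I would show that these charts are regular in codimension one. Each chart is presented explicitly using the binomial relation $y^2-x^{b-d}z$, which becomes a monomial-type relation after inverting $x$.

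A cleaner alternative uses the numerical semigroup origin. By Theorem~\ref{thm:classification}(4), $I=\varphi_H^{-1}(t^{h}k[t]\cap k[H])$, so $I$ is the intersection of $\ker\varphi_H+\m$-type valuation ideals with a monomial valuation. I would try to show $\ol{I^n}=I^n$ by computing $\ol{I^n}$ as $\{f: v_i(f)\ge n v_i(I)\}$ for the finitely many Rees valuations: the $t$-adic order through $\varphi_H$ and a few monomial valuations. Then I would check membership in $I^n=J^{n-1}I$ by induction on $n$, using a coefficient-separation argument. For $f\in\ol{I^n}$, write $f=g\cdot(y^2-x^{b-d}z)+h$ with $h$ free of $y^2$. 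One then compares $\varphi_H$-orders and monomial orders to force $g\in\ol{I^{n-1}}$ and $h$ into the monomial part.

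\textbf{Main obstacle.} The main obstacle is this last membership argument: identifying the Rees valuations exactly and showing that $\ol{I^n}\subseteq J^{n-1}I$ for every $n$, uniformly in $b,d,e,r$ and over every field. I would attack it through the one-variable homomorphisms $\varphi_H$, together with substitutions $y\mapsto$ unit multiples that separate coefficients, and I would treat the symmetric family \eqref{eq:family-two} first, where $z$ appears only in $x^rz,yz,z^2$.
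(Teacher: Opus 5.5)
Your Cohen--Macaulay plan matches the paper's approach: a three-generated $Q\subseteq I$ with reduction number one, followed by Valabrega--Valla and Goto--Shimoda in $S_\m$. However, the explicit reductions are the whole content of that step, and yours are unverified. Your sample for \eqref{eq:family-two} is not even $\m$-primary, since modulo $x$ it becomes $(y^2+z^2)$. For \eqref{eq:family-two} the paper takes $Q=(g_1,g_3+g_6,g_4)$ or $(g_1,g_3+g_6,g_5)$. For \eqref{eq:family-one} it needs four case-dependent choices and proves only $I^2=QI+\m I^2$, which suffices by Nakayama after localizing.

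The genuine gap is normality. Your condition ``$\gr_I(S)$ reduced at its minimal primes'' can never hold here. If it held at $P$, then $S[IT,T^{-1}]$ localized at the preimage of $P$ would be a DVR with valuation $w$ and uniformizer $T^{-1}$. Choose $c\in I$ with $w(cT)=0$; this is possible because that preimage cannot contain $IT$. Then $w(c)=1$, whereas $c\in I\subseteq\m^2$ and $w\geq1$ on $\m$ force $w(c)\geq2$. The Serre route itself is sound, but the primes to check are those lying over $\m$, i.e.\ the minimal primes of $\m\calR(I)$, and you never determine them. Your ``cleaner alternative'' misidentifies the Rees valuations. First, $\ord_t\circ\varphi_H$ is not a valuation on $S$, since it is infinite on $\fkp_H\ni g_1$. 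Second, the Rees valuations cannot all be monomial, since then $I=\ol{I}$ would be a monomial ideal. So the proposed description of $\ol{I^n}$ and the induction toward $I^n=J^{n-1}I$ have nothing to run on, as your own ``main obstacle'' concedes.

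The missing idea is to sandwich $I$ inside a normal monomial ideal. With $f=y^2$, the ideal $I+(f)=I_\ell$ is an integrally closed monomial ideal with six generators, hence normal by Ataka--Matsuoka, and moreover $\m f\subseteq I$. For homogeneous $\xi\in\ol{I^s}\setminus I^s$ this gives $\xi\in(I+(f))^s=\sum_j f^jI^{s-j}$. Take $m$ minimal with $\xi=\xi_0+f^m\omega$, where $\xi_0\in\sum_{j<m}f^jI^{s-j}$ and $\omega\in I^{s-m}$. Since $f^m\m I^{s-m}\subseteq f^{m-1}I^{s-m+1}$, you may take $\omega=\sum_\lambda c_\lambda g^\lambda$ with $c_\lambda\in k$. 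Identities such as $fg_6=g_5^2$ and, in \eqref{eq:family-two}, $fg_2=g_3^2$ then prune the index set. Now use arcs $\psi_\varepsilon\colon S\to k[[t]]$ along which the two terms of $g_1$ have cancelling leading terms, with $\ord_t\psi_\varepsilon(f)=\nu-1$, $\ord_t\psi_\varepsilon(g_i)\geq\nu$, and an $\varepsilon$-dependent leading coefficient for $g_1$. Integrality forces $\ord_t\psi_\varepsilon(\omega)\geq(s-m)\nu+1$, so the coefficient of $t^{(s-m)\nu}$ vanishes for every $\varepsilon$. When the surviving products carry distinct $\varepsilon$-exponents, all $c_\lambda$ vanish, contradicting the minimality of $m$. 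This filtration by powers of $f$ is what turns coefficient separation into a proof; without it your outline does not establish normality.
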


The ideal in \eqref{eq:family-one} is homogeneous for the standard grading
precisely when $b-d=1$, whereas no ideal in \eqref{eq:family-two} is
homogeneous for the standard grading.  When $b-d=1$, the
characteristic-zero case is covered by the theorem of
Endo--Goto--Hong--Ulrich, but the positive-characteristic case is not.  The
ideals in \eqref{eq:family-one} with $b-d>1$, as well as all the ideals in
\eqref{eq:family-two}, lie outside the scope of that theorem in every
characteristic.  Thus Theorem~\ref{thm:main} goes beyond the ranges covered by the two
results above, including in positive characteristic along the standard graded
boundary.

We now explain the origin of these ideals.  Let
$H=\langle a_1,a_2,a_3\rangle$ be a numerical semigroup of embedding
dimension three, where $a_1,a_2,a_3$ form its minimal system of generators,
let $k[H]=k[t^{a_1},t^{a_2},t^{a_3}]$ be the $k$-subalgebra of the polynomial ring $k[t]$, and consider the graded homomorphism
\[
 \varphi_H\colon S=k[x_1,x_2,x_3]\longrightarrow k[H],
 \qquad x_i\longmapsto t^{a_i},
\]
where $\deg x_i=a_i$.  For each $0<h\in\mathbb N$, set
\[
 I_h=\varphi_H^{-1}\bigl(t^hk[t]\cap k[H]\bigr).
\]
The ideal $t^hk[t]$ is integrally closed in the polynomial ring $k[t]$.
Hence its contraction to $k[H]$, and therefore its inverse image
$I_h$, is integrally closed.

Let $\fkp_H=\Ker\varphi_H$, and let $\ell$ be the least weighted degree of a
nonzero homogeneous element of $\fkp_H$.  If $h\leq\ell$, then $I_h$ is a
monomial ideal.  The descriptions of the defining ideals of numerical
semigroup rings of embedding dimension three used here are based on Herzog's work
\cite{Herzog1970}.  For the numerical semigroups considered in this paper,
the degree-$\ell$ component of $\fkp_H$ is one-dimensional over $k$ and is
spanned by a binomial.  Consequently, when $h=\ell+1$, precisely this relation
remains as a binomial generator, while the relations of larger degree are
absorbed into the monomial part of $I_{\ell+1}$.  This is why the ideals in
Theorem~\ref{thm:main} have five monomial generators and one binomial generator.
The present paper focuses on those $I_{\ell+1}$ that are minimally generated
by six elements, and investigates whether the relations among their
generators provide enough control over arbitrary powers to prove normality.

The following classification shows that the two families in Theorem~\ref{thm:main}
are precisely the six-generated families arising from this construction.
When comparing $I_{\ell+1}$ with the ideals
in \eqref{eq:family-one} and \eqref{eq:family-two}, we use $x,y,z$ for a
suitable ordering of $x_1,x_2,x_3$.

\begin{theoremB}\label{thm:classification}
Let $H$ be a numerical semigroup of embedding dimension three and multiplicity
three or four, let $\ell$ be as above, and consider $I_{\ell+1}$.
\begin{enumerate}[label=\textup{(\arabic*)}]
 \item If $H$ has multiplicity three, then, after a possible interchange of
 variables, $I_{\ell+1}$ is of the form \eqref{eq:family-one}.
 \item If $H$ is symmetric of multiplicity four, equivalently, if $k[H]$ is
 Gorenstein, then, after a possible interchange of variables, either
 \[
  I_{\ell+1}=(x_3)+JS
 \]
 for an integrally closed $(x_1,x_2)$-primary ideal
 $J\subseteq k[x_1,x_2]$, or $I_{\ell+1}$ is of the form
 \eqref{eq:family-one} or \eqref{eq:family-two}.
 \item If $H$ is non-symmetric of multiplicity four, then $I_{\ell+1}$ is
 minimally generated by seven elements.
 \item Conversely, every choice of the parameters in
 \eqref{eq:family-one} is realized by a numerical semigroup of multiplicity
 three, and every choice of the parameters in \eqref{eq:family-two} is
 realized by a symmetric numerical semigroup of multiplicity four.
\end{enumerate}
\end{theoremB}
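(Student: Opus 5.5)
The plan is to compute $I_{\ell+1}$ directly from Herzog's explicit description of $\fkp_H$. I will use that $I_h$ is spanned, as a $k$-vector space, by the monomials of weighted degree $\geq h$ together with the homogeneous elements of $\fkp_H$ of degree $<h$. This holds because $\varphi_H$ is graded and $t^hk[t]\cap k[H]$ is spanned by the $t^s$ with $s\in H$, $s\geq h$. So $I_{\ell+1}=M+(f)$, where:
\begin{itemize}
\item $M$ is the monomial ideal generated by the monomials of degree $\geq\ell+1$;
\item $f$ spans $(\fkp_H)_\ell$.
\end{itemize}
I still have to verify that this component is one-dimensional and spanned by a binomial. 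By Herzog \cite{Herzog1970}, either $H$ is non-symmetric and $\fkp_H$ is generated by the $2\times2$ minors of
\[
\begin{pmatrix} x_1^{\alpha_1}&x_2^{\alpha_2}&x_3^{\alpha_3}\\ x_2^{\beta_2}&x_3^{\beta_3}&x_1^{\beta_1}\end{pmatrix},
\]
or $H$ is symmetric, $\fkp_H=(x_i^{c_i}-x_j^{p}x_k^{q},\,x_j^{c_j}-x_k^{c_k})$ is a complete intersection, and $H$ is a gluing. In each case I determine $\ell$ as the minimum of the degrees of these generators. I then check that the minimum is attained by exactly one of them, up to scalars. The main tool is that, in multiplicity three or four, one exponent in the relevant relation is at most $2$ or $3$.

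For (1), write $H=\langle3,a,b\rangle$ with $a\equiv1$, $b\equiv2 \pmod 3$ (or the reverse). First I observe that $H$ is never symmetric: a gluing would force $3$ to share a nontrivial common factor with one of the other generators. Since each row of Herzog's matrix has exponents summing to the degree data, $\alpha_i+\beta_i$ equals the least multiple $c_i$ with $c_ia_i\in\langle a_j,a_k\rangle$. Because $c_1\le$ small, the only possibilities are that the $3\times$ relation has $c=3$ and the others have $c=2$; that is, $x_2^2=x_1^{m}x_3$ and $x_3^2=x_1^{n}x_2$ and $x_1^{c}=x_2x_3$, suitably. I rename the variable whose square appears in the minimal-degree relation as $y$, the other as $z$, and $x=x_1$. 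This yields $f=y^2-x^{b-d}z$ with $b>d\ge0$ as parameters. Next I list the monomials of degree $\ge\ell+1$ minimal under divisibility by computing $\deg$ of $x^iy^jz^k$ for $j,k\le1$ or $2$, and find exactly $x^{2b+2-e},x^{b+1}y,x^{b-d+1}z,yz,z^2$. The monomial $y^2$ is not in $M$, but $y^2\equiv x^{b-d}z$ modulo $f$. Here $e$ records whether $a\equiv 1$ or $2$, i.e.\ which of the two "staircases" occurs.

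For (2), symmetric of multiplicity four means $H=\langle4,2u,v\rangle$ with $u,v$ odd, glued as $\langle 4,2u\rangle$ and $v$. Then
\[
\fkp_H=(x_2^2-x_1^{u},\; x_3^2-x_1^{p}x_2^{q}),\qquad q\in\{0,1\}.
\]
I split according to which generator has smaller degree, $4u$ or $2v$:
\begin{itemize}
\item If $x_3^2-x_1^px_2^q$ has smaller degree with $q=0$, the roles reverse into a two-variable ideal plus a variable, giving $(x_3)+JS$ in suitable coordinates. Similarly, if $\ell+1\le\deg x_3$, then $x_3$ itself lies in $I_{\ell+1}$.
\item Otherwise, with $y=x_2$, $x=x_1$, $u=2b+1$, the computation of $M$ gives \eqref{eq:family-two}, with $r$ determined by $v$. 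Alternatively, with $y=x_3$, it gives \eqref{eq:family-one}.
\end{itemize}
I must also confirm that $J$ is integrally closed. This follows because $I_{\ell+1}\cap k[x_1,x_2]$ is again a contraction of $t^{\ell+1}k[t]$.

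For (3), Herzog's matrix in multiplicity four non-symmetric has some $c_i=2$ and the others $3$ (or similar). The monomial part then needs one more generator, which I would exhibit directly. Minimality of all seven generators is checked by degree comparison.

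For (4), I give explicit semigroups. For \eqref{eq:family-one} I take $H=\langle3,3b+2-e,3b+3d+4-2e\rangle$, matching the grading in the introduction. For \eqref{eq:family-two} I take $H=\langle4,4b+2,v\rangle$ with $v$ odd chosen so that $\deg(x^rz)\ge\ell+1>\deg(x^{r-1}z)$; for example $v=4(b-r)+\ldots$ adjusted to be odd and large enough that $z^2$ and $yz$ exceed $\ell=8b+4$. Then I rerun the computation of (1)/(2) to confirm the generators.

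I expect the main obstacle to be the bookkeeping in (2) and (3). Determining exactly which monomials are minimal of degree $\ge\ell+1$ requires a careful subcase analysis of inequalities among $u$, $v$, $p$, $q$. I would organize it by first fixing the minimal-degree relation and then bounding each pure power $x^i$, $x^iy$, $x^iz$ separately.
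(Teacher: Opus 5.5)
\paragraph{Parts (1), (2) and (4).} Your mechanism is the paper's: write $I_{\ell+1}=M+(f)$, where $M$ is generated by the monomials of degree $\ge\ell+1$ and $f$ is the unique Herzog relation of least degree. Parts (1) and (4) go through this way, up to small slips. In (1), the $x_1$-relation is $x_1^{(a+b)/3}-x_2x_3$, not one with exponent $3$. Also, $M$ has the additional minimal generators $xy^2$ and $y^3$, which become redundant only modulo $f$. In (4), an explicit valid choice is $v=8b+7-4r$.

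\paragraph{Part (3): the genuine gap.} This is the only part asserting an exact count, and there the argument is only announced.
\begin{enumerate}
\item You never compute $I_{\ell+1}$.
\item Your Herzog data are off. For $H=\langle4,u,v\rangle$ with $4<u<v$ odd, the relations are $x_1^{(u+v)/4}-x_2x_3$, $x_2^3-x_1^{(3u-v)/4}x_3$ and $x_3^2-x_1^{(v-u)/2}x_2^2$, of degrees $u+v$, $3u$, $2v$. This forces the split $v<2u$ versus $2u<v<3u$.
\item ``The monomial part needs one more generator'' is not an argument.
\item Above all, the lower bound $\mu_S(I_{\ell+1})\ge7$ cannot come from weighted-degree comparison. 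The relations of non-least degree can have degree $\ge\ell+4$, where $\m I_{\ell+1}$ is nonzero. For $\langle4,9,15\rangle$, the relation $z^2-x^3y^2$ has degree $30$, and $x^3y^2=x\cdot x^2y^2\in\m I_{25}$.
\end{enumerate}
What is missing is the count of Lemma~\ref{lem:number-of-generators}:
\[
\mu_S(I_{\ell+1})=\dim_k\frac{\fkp_H}{\fkp_H\cap\m I_{\ell+1}}+\mu_R\bigl(t^{\ell+1}k[t]\cap R\bigr).
\]
The second term is $4$, since the four residue-class monomials have degrees $\ell+1,\dots,\ell+4$ and $1,2,3\notin H$. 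The first term is $3$ only after you check that no Herzog relation lies in $\m I_{\ell+1}$:
\begin{itemize}
\item the least one by degree;
\item each of the other two either has degree at most $\ell+3$, or is congruent modulo $\m I_{\ell+1}$ to $x_2^3$ or $x_3^2$ (if $v<2u$), respectively to $-x_2x_3$ or $x_3^2$ (if $2u<v<3u$);
\item those monomials are excluded by setting $x_1=x_3=0$, $x_1=x_2=0$, or $x_1=0$, respectively.
\end{itemize}

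\paragraph{The case split in (2).} As written, it is wrong. Since $u$ and $v$ are odd, $v=2p+qu$ forces $q$ to be odd; take $q=1$ and $p=(v-u)/2$. So your branch $q=0$ is vacuous. Moreover, when the $x_3$-relation has the smaller degree ($2v<4u$), one has $v<\ell$, so $x_3\notin I_{\ell+1}$. The result is then \eqref{eq:family-one} with $(x,y,z)=(x_1,x_3,x_2)$, not $(x_3)+JS$. The correct trichotomy is:
\begin{itemize}
\item $v<2u$ gives \eqref{eq:family-one};
\item $2u<v\le4u$ gives \eqref{eq:family-two};
\item $v\ge4u+1$ puts $x_3$ in $I_{\ell+1}$, giving $(x_3)+JS$.
\end{itemize}
Parity gives $2v\ne4u$, so the least-degree relation is unique. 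In the first case you must still verify the rigid exponent pattern of \eqref{eq:family-one}. The least power of $x_1$ in $M$ is $x_1^{(v+1)/2}$, and the least $w$ with $x_1^wx_3\in M$ is $\lceil(v+1)/4\rceil$. Hence $(v+1)/2=2w-e$ with $e\in\{0,1\}$ and $b=w-1$, as required.
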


Thus Theorem~\ref{thm:main} covers every six-generated ideal arising from this construction
when the semigroup has multiplicity three or is symmetric of multiplicity
four, apart from the cases that reduce immediately to two variables.  We
emphasize that Theorem~\ref{thm:classification} classifies the ideals produced by this
numerical-semigroup construction; it is not a classification of all
six-generated integrally closed ideals that are homogeneous for some positive
grading.

These results provide evidence for the expectation that, in a three-dimensional regular local ring $(A,\mathfrak n)$, 
every six-generated integrally closed $\mathfrak n$-primary ideal has a Cohen–Macaulay normal Rees algebra. 
The two families treated here establish this conclusion for a class of non-monomial ideals over an arbitrary field, including ideals that are not homogeneous for the standard grading.

For Cohen–Macaulayness, we explicitly choose a three-generated ideal $Q\subseteq I$ and verify that $I^2=QI+\mathfrak mI^2$. 
After localization, this equality allows us to apply the standard criterion for Cohen–Macaulayness of Rees algebras. 
The verification uses relations among the generators specific to each family.

The first case of Theorem~\ref{thm:classification}(2) is covered by
\cite[Theorem~2.1]{EndoGotoHongUlrich2026}, as explained in
Subsection~\ref{subsec:symmetric-multiplicity-four}.

The non-symmetric case of multiplicity four lies just beyond the scope of
Theorem~\ref{thm:main} because it produces seven generators.  To give a seven-generated
affirmative instance of Question~\ref{que:seven-generators}, we study in
detail the ideal obtained from the numerical semigroup
$H=\langle4,9,15\rangle$.  After determining its seven minimal generators in
the proof of Theorem~\ref{thm:classification}, we prove that it is normal,
that its localization at $\m$ has reduction number one, and that its Rees
algebra is a Cohen--Macaulay normal domain.  This example provides evidence
for the first part of Question~\ref{que:seven-from-semigroups}, concerning
all seven-generated ideals arising from non-symmetric numerical semigroups of
embedding dimension three and multiplicity four.  Although we do not claim a general theorem for this class,
we expect the same method to apply throughout the non-symmetric
multiplicity-four case. Since the additional case analysis is unlikely to
yield further mathematical insight, we restrict the detailed treatment to
one seven-generated example.

The construction also suggests a question about successive degree bounds.
For $h\in H$, set
\[
 \sigma(h)=\min\{n\in H\mid n>h\}.
\]
Assume that both $I_h$ and $I_{\sigma(h)}$ have at most seven minimal
generators.  If $I_h$ is normal, must $I_{\sigma(h)}$ also be normal?
Without the generator bounds, this implication can fail even for two
successive monomial ideals, as Example~\ref{ex:successive-cutoffs-eight-generators}
shows.  The successor
$\sigma(h)$, rather than $h+1$, is the natural quantity here because $h+1$
need not belong to $H$, and several consecutive integers may therefore define
the same ideal.  Although $\ell+1$ need not belong to $H$, one always has
$I_{\ell+1}=I_{\sigma(\ell)}$.  The passage from $I_\ell$ to
$I_{\ell+1}$ is the first passage from a monomial ideal to
an ideal with one binomial generator.  
Because the strategy of Subsection \ref{subsec:normality} uses the normality of $J=I+(f)$ independently of how it is established, it may be applied successively to the pairs $I_{\sigma(h)}\subset I_h$, provided that the product reductions and coefficient-separation conditions can be verified at each step.

The paper is organized as follows.  Section~\ref{sec:general-setup} fixes the
numerical-semigroup setup and explains the two common proof strategies: the
reduction calculation used to prove the Cohen--Macaulayness of the Rees
algebra and the coefficient-separation argument used to prove normality.  In
Section~\ref{sec:target-ideals} we determine the ideals $I_{\ell+1}$ arising
in multiplicities three and four and prove
Theorem~\ref{thm:classification}.
Section~\ref{sec:main-theorem} proves
Theorem~\ref{thm:main}; the families in \eqref{eq:family-one} and
\eqref{eq:family-two} are treated in Subsections~\ref{sec:family-one}
and~\ref{sec:family-two}, respectively.
Section~\ref{sec:nonsymmetric-example} is devoted to the seven-generated ideal
arising from $\langle4,9,15\rangle$.
Section~\ref{sec:further-questions} records several questions suggested by
these results, including the question concerning successive ideals $I_h$.

\section{Setup and strategy of the proofs}
\label{sec:general-setup}

In this section, we recall the basic terminology and describe the strategies used in the proof of the main theorem.
We first review basic notions and properties of numerical semigroups and numerical semigroup rings, and then describe the strategies used to prove the Cohen--Macaulayness and normality of the Rees algebras appearing in the main theorem.

In Subsections~\ref{subsec:cm-strategy} and \ref{subsec:normality}, whenever
an infinite ground field is needed, we may replace $k$ by a purely
transcendental extension and assume that it is infinite; the assertions
concerning Cohen--Macaulayness and integral closedness descend by faithful
flatness.

\subsection{Basic notions of numerical semigroups and numerical semigroup rings}\label{subsec:basicnotion}

A \defword{numerical semigroup} $H$ is a submonoid of $\mathbb{N}$ with finite complement $\mathbb{N} \setminus H$.
For every numerical semigroup $H$, there exists a unique minimal system of generators $\{a_1, a_2, \ldots , a_n\}$ of $H$; that is, $H=\{\lambda_1 a_1 + \lambda_2 a_2 + \cdots + \lambda_n a_n \mid \lambda_1, \lambda_2, \ldots , \lambda_n\in \mathbb{N}\}$, and no proper subset of $\{a_1,a_2,\ldots,a_n\}$ generates $H$. Note that the greatest common divisor of $a_1, a_2, \ldots , a_n$ must be one. 
We write $\left<a_1, a_2, \ldots , a_n\right>$ for the numerical semigroup generated by $a_1, a_2 ,\ldots , a_n$.
The number $n$ is called the \defword{embedding dimension} of $H$, denoted by $\embdim(H)$, and the least positive integer in $H$ is called the \defword{multiplicity} of $H$, denoted by $\mult(H)$.
We have $\embdim(H)\leq\mult(H)$.
For any $h \in H \setminus \{0\}$, we put
$$
\Ap(H,h) = \{h' \in H \mid h' - h \notin H\}
$$
and call it the \defword{Ap\'ery set} of $H$ with respect to $h$. Obviously, if we put
$$
h'_i = \min \{h' \in H \mid h' \equiv i \pmod{h}\}
$$
for each $0 \le i \le h-1$, then $\Ap(H,h) = \{h'_0, h'_1, \ldots , h'_{h-1}\}$.
The largest integer not belonging to $H$ is called the \defword{Frobenius number} of $H$, denoted by $\Fr(H)$.
Hence, we have $a \in H$ for every integer $a \ge \Fr(H) + 1$, and $\Fr(H) = \max \Ap(H,h) - h$ for every $h \in H \setminus \{0\}$. 

Throughout this paper, let $k$ denote an arbitrary field. The semigroup ring of $H = \left<a_1, a_2, \ldots , a_n\right>$ over $k$ is the $k$-subalgebra 
$$
k[H] = k[t^h \mid h \in H] = k[t^{a_1}, t^{a_2}, \ldots , t^{a_n}]
$$
of the polynomial ring $k[t]$. The ring $k[H]$ is a one-dimensional Cohen--Macaulay domain. We regard $k[t]$ as a graded ring by $\deg t =1$, and hence $k[H]$ is a graded subring of $k[t]$.
The embedding dimension of $H$ coincides with the embedding dimension of the ring $k[H]$, and the multiplicity of $H$ is equal to the multiplicity of $k[H]$ with respect to the irrelevant maximal ideal $\fkm = (t^{a_1}, t^{a_2}, \ldots , t^{a_n})$.
Note that, for every $h \in H\setminus \{0\}$, the residue classes of the monomials $t^{h'}$, where $h'\in\Ap(H,h)$, form a basis of the $k$-vector space $k[H]/t^hk[H]$. 
Hence $k[H]$ is Gorenstein if and only if $\Ap(H,h)$ is symmetric in the following sense for every (equivalently, for some) $h\in H \setminus \{0\}$: if we write $\Ap(H,h) = \{c_1 < c_2 < \cdots < c_h\}$, then $c_i + c_{h+1-i} = c_h$ for each $1 \le i \le h$. When this is the case, we call $H$ a \defword{symmetric} numerical semigroup.
Suppose in addition that $H\ne\mathbb N$. If $\embdim(H)=\mult(H)$, equivalently, if the ring $k[H]$ has maximal embedding dimension, then $H$ is symmetric if and only if $\embdim(H) = 2$.

Let $H=\left<a_1, a_2, \ldots ,a_n\right>$, and let $S=k[x_1, x_2, \ldots , x_n]$ be the polynomial ring graded by $\deg x_i =a_i$.
Consider the graded ring homomorphism $\varphi_H: S \to k[H]$ defined by $\varphi_H(x_i) = t^{a_i}$, and put $\fkp_H = \Ker \varphi_H$. We call $\fkp_H$ the \defword{defining ideal} of $k[H]$.

Since only the case $n=3$ will be discussed in the remainder of this paper, the basic results for this case are summarized below. The following results are due to Herzog \cite{Herzog1970}.

\begin{theorem}[\cite{Herzog1970}]\label{thm:herzog}
	Let $H=\left<a_1, a_2, a_3\right>$ be a numerical semigroup with $\embdim(H) = 3$. Then the following assertions hold.
	\begin{enumerate}
		\item $H$ is symmetric if and only if, after a suitable reordering of $a_1, a_2, a_3$, the greatest common divisor $d$ of $a_1$ and $a_2$ is at least 2 and $a_3 \in \left<\frac{a_1}{d}, \frac{a_2}{d}\right>$.
		\item For each $1\le i\le3$, put $q_i = \min\{q >0\mid qa_i \in \left<\{a_1, a_2,a_3\} \setminus \{a_i\}\right>\}$ and choose any $p_{ij}\in\mathbb N$ $(j\ne i)$ such that $q_i a_i = \sum_{j\ne i}p_{ij}a_j$. Then $$\fkp_H = \left(x_i^{q_i} - \prod_{j\ne i}x_j^{p_{ij}} ~\middle|~ i = 1,2,3\right).$$
		If $H$ is non-symmetric, these three binomials form a minimal system of
		generators of $\fkp_H$.
		\item If $H$ is symmetric and $a_1,a_2,a_3$ have been ordered as in (1), then $$\fkp_H = (x_1^{\frac{a_2}{d}} - x_2^{\frac{a_1}{d}}, x_3^d - x_1^{p_1}x_2^{p_2}),$$ where $a_3 = p_1 \frac{a_1}{d} + p_2 \frac{a_2}{d}$ $(p_1, p_2 \in \mathbb{N})$. These two binomials form a minimal system of generators of $\fkp_H$, and this agrees with the presentation in (2).
	\end{enumerate}
\end{theorem}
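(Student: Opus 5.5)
Since Theorem~\ref{thm:herzog} is quoted from \cite{Herzog1970}, I would not reprove it in full here. If a self-contained argument were wanted, I would organize it as follows, proving (2) first and deducing (1) and (3) from it.

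\emph{Step 1: the ideal $\fkp_H$ modulo $x_1$.} For each $i$ the binomial $f_i=x_i^{q_i}-\prod_{j\ne i}x_j^{p_{ij}}$ is homogeneous and lies in $\fkp_H$. Put $F=(f_1,f_2,f_3)$. Since $\varphi_H(x_1)=t^{a_1}$ is a nonzerodivisor on the domain $k[H]$, we have
\[
 \ell_S\bigl(S/(\fkp_H+(x_1))\bigr)=\dim_k k[H]/t^{a_1}k[H]=\#\Ap(H,a_1)=a_1 .
\]
The plan is to show that $\ell_S\bigl(S/(F+(x_1))\bigr)\le a_1$. Because $F+(x_1)\subseteq\fkp_H+(x_1)$, this forces $F+(x_1)=\fkp_H+(x_1)$. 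Then $\fkp_H\subseteq F+x_1S$. Any $g\in\fkp_H$ can be written $g=f+x_1s$ with $f\in F$, and then $x_1s\in\fkp_H$, so $s\in\fkp_H$ because $\fkp_H$ is prime and $x_1\notin\fkp_H$. This gives $\fkp_H=F+x_1\fkp_H$, and the graded Nakayama lemma yields $\fkp_H=F$.

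\emph{Step 2: the numerical identities (the main obstacle).} To compute the length in Step~1, I would first treat the case where all $p_{ij}>0$. Here I would prove two identities. The first is $q_i=p_{ji}+p_{ki}$ for $\{i,j,k\}=\{1,2,3\}$. To get it, subtract the relations $q_ja_j=p_{ji}a_i+p_{jk}a_k$ and $q_ka_k=p_{ki}a_i+p_{kj}a_j$ and use the minimality of the $q$'s. This produces a relation $(q_i-p_{ji}-p_{ki})a_i\in\langle a_j,a_k\rangle$ with a coefficient that must vanish. The second identity is $a_1=q_2q_3-p_{23}p_{32}$. To get it, I would show that $\Ap(H,a_1)$ consists exactly of the elements $\lambda_2a_2+\lambda_3a_3$ with $0\le\lambda_2<q_2$, $0\le\lambda_3<q_3$, and not both $\lambda_2\ge p_{32}$ and $\lambda_3\ge p_{23}$. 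I would then check that these elements are pairwise incongruent modulo $a_1$.

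Granting these identities, we get
\[
 F+(x_1)=\bigl(x_1,\ x_2^{q_2},\ x_3^{q_3},\ x_2^{p_{32}}x_3^{p_{23}}\bigr),
\]
because the monomial $x_2^{p_{32}}x_3^{p_{23}}$ appears in $f_1$ and the other terms of $f_2,f_3$ involve $x_1$. This monomial ideal has colength $q_2q_3-(q_2-p_{32})(q_3-p_{23})=q_2p_{23}+q_3p_{32}-p_{23}p_{32}$. Using $q_2=p_{12}+p_{32}$ and $q_3=p_{13}+p_{23}$, this should simplify to $a_1$. I expect this bookkeeping, and making the Ap\'ery description rigorous, to be the hardest part.

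\emph{Step 3: the case where some $p_{ij}=0$, and parts (1) and (3).} If some $p_{ij}=0$, say $q_1a_1=p_{12}a_2$, then $d=\gcd(a_1,a_2)\ge2$ and $q_1=a_2/d$. One then checks that $a_3\in\langle a_1/d,a_2/d\rangle$ with $q_3=d$, and that $F$ is generated by the two binomials in (3). In this case the length count is simply $\ell\bigl(S/(x_1,x_2^{a_1/d},x_3^d)\bigr)=a_1$, so Step~1 applies. For (1): $k[H]$ is Gorenstein if and only if it is a complete intersection, since a Gorenstein ideal of height two is a complete intersection (Serre). Together with the dichotomy just described and the symmetry criterion of Subsection~\ref{subsec:basicnotion}, this gives the equivalence in (1). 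Finally, in the non-symmetric case all $p_{ij}>0$, so each pure power $x_i^{q_i}$ is the lowest pure power of $x_i$ appearing in $\fkp_H$. A degree comparison then shows that no $f_i$ lies in the ideal generated by the other two, which proves minimality in (2).
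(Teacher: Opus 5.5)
The paper does not prove this statement; it cites Herzog, so your outline is a self-contained alternative rather than a variant of an argument in the text. Its skeleton is sound: Step~1 (colength $a_1$ of $\fkp_H+(x_1)$, then $\fkp_H=F+x_1\fkp_H$ and graded Nakayama) works, and so does the use of Serre's theorem for (1).

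\textbf{Step 2 fails as written because of an index error.} Modulo $x_1$, the binomial $f_1=x_1^{q_1}-x_2^{p_{12}}x_3^{p_{13}}$ leaves the monomial $x_2^{p_{12}}x_3^{p_{13}}$, not $x_2^{p_{32}}x_3^{p_{23}}$. With your monomial, your first identity gives colength $q_2q_3-(q_2-p_{32})(q_3-p_{23})=q_2q_3-p_{12}p_{13}$. In general this is not your (correct) value $a_1=q_2q_3-p_{23}p_{32}$.

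For $H=\langle4,9,15\rangle$ one has $q_2=3$, $q_3=2$, $p_{12}=p_{13}=p_{23}=1$ and $p_{32}=2$, so you get $5\neq4$. Your description of $\Ap(H,4)$ then contains $9+15=24=6\cdot4$, so the planned incongruence check is false. The correct statement is $F+(x_1)=(x_1,x_2^{q_2},x_3^{q_3},x_2^{p_{12}}x_3^{p_{13}})$, with colength $q_2q_3-(q_2-p_{12})(q_3-p_{13})$.

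Once this is fixed, neither identity is needed. Step~1 only requires colength $\le a_1$. That is, distinct exponent pairs $(\lambda_2,\lambda_3)$ outside this monomial ideal must give elements $\lambda_2a_2+\lambda_3a_3$ that are incongruent mod $a_1$. This follows from the minimality of $q_1,q_2,q_3$ by splitting according to the signs of $\lambda_2-\mu_2$ and $\lambda_3-\mu_3$. When both differences are $\ge0$, compare with $q_1a_1=p_{12}a_2+p_{13}a_3$.

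Separately, your derivation of $q_i=p_{ji}+p_{ki}$ does not work as described. The two relations you combine only yield $p_{ji}+p_{ki}\ge q_i$, and only after you first prove $p_{kj}<q_j$ and $p_{jk}<q_k$. Equality comes from adding all three relations. This gives $\sum_i\bigl(\sum_{j\ne i}p_{ji}-q_i\bigr)a_i=0$, whose coefficients are nonnegative and hence all zero.

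\textbf{Step 3 leaves the key implication unproved.} You say ``one then checks'' that $q_1a_1=p_{12}a_2$ forces $a_3\in\langle a_1/d,a_2/d\rangle$ and $q_3=d$. This is not a routine check. Both the hard direction of (1) and the minimality assertion in (2) rest on it. The first runs: symmetric $\Rightarrow$ complete intersection $\Rightarrow$ some $p_{ij}=0$ $\Rightarrow$ the stated structure. The second needs: non-symmetric $\Rightarrow$ all $p_{ij}>0$.

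Since $\gcd(a_3,d)=1$, $q_3$ is automatically a multiple of $d$, so what must be shown is $da_3\in\langle a_1,a_2\rangle$. Here is one way to argue.
\begin{itemize}
\item Comparing $q_1a_1=(a_1/d)a_2$ with $q_2a_2=p_{21}a_1+p_{23}a_3$ shows that $q_2=a_1/d$ or $p_{21}=0$.
\item If $q_2=a_1/d$, consider the $a_1$ elements $\lambda_2a_2+\lambda_3a_3$ with $\lambda_2<a_1/d$ and $\lambda_3<d$. They are pairwise incongruent mod $a_1$ (reduce mod $d$). Using the minimality of $q_2$ together with a reduction mod $d$, they all lie in $\Ap(H,a_1)$, so they form the Ap\'ery set. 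The one congruent to $da_3$ must have $\lambda_3=0$, which gives $da_3\in\langle a_1,a_2\rangle$.
\item If $p_{21}=0$, you must pass to the pair $(a_2,a_3)$ and rule out a cycle of three pure relations. One way is to compare $q_1q_2q_3$ with the product of the bounds $q_1\le a_3/\gcd(a_1,a_3)$, $q_2<a_1/d$ and $q_3<a_2/\gcd(a_2,a_3)$.
\end{itemize}
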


\begin{example}
	\begin{enumerate}
		\item Let $H=\left<3,5,7\right>$. It is not symmetric because $3,5,7$ are pairwise coprime. In this case, $q_1 = 4$, $q_2 = q_3 = 2$, $p_{12} = p_{13} = p_{21} = p_{23} = p_{32} = 1$, and $p_{31} = 3$. Hence $\fkp_H= (x_1^4 - x_2x_3, x_2^2-x_1x_3,x_3^2-x_1^3x_2)$.
		\item Let $H=\left<4,6,11\right>$. Since $11 = 4 \cdot \frac{4}{2} + \frac{6}{2} \in \left<\frac{4}{2}, \frac{6}{2}\right>$, $H$ is symmetric, and $\fkp_H = (x_1^3-x_2^2, x_3^2-x_1^4x_2)$.
	\end{enumerate}
\end{example}

\begin{corollary}\label{cor:emb4symm}
	Let $H=\left<4,a_2, a_3\right>$ be a numerical semigroup with $\embdim(H) = 3$ and $\mult(H) = 4$.
	Then $H$ is symmetric if and only if one of $a_2$ and $a_3$ is even. When this is the case, the other generator must be odd.
	Moreover, there exist an odd integer $p\geq3$ and an integer $q\geq1$ such
	that $H=\left<4,2p,p+2q\right>$.
\end{corollary}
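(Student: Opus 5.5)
We apply Theorem~\ref{thm:herzog}(1) with $a_1=4$. Since $\embdim(H)=3$, the generators are $4,a_2,a_3$ with $a_2,a_3\notin 4\mathbb N$, and, because $\gcd(4,a_2,a_3)=1$, at least one of $a_2,a_3$ is odd.

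First suppose $H$ is symmetric. By Theorem~\ref{thm:herzog}(1), after reordering, two of the generators have a greatest common divisor $d\ge2$. Any pair $\{4,c\}$ with $d=\gcd(4,c)\ge2$ forces $c$ even, because $c\notin4\mathbb N$ gives $d=2$. A pair $\{a_2,a_3\}$ with $d\ge2$ would require $a_2,a_3$ both even, or both divisible by an odd prime $p$. The first is impossible, since one of them is odd. In the second case, Theorem~\ref{thm:herzog}(1) requires $4\in\langle a_2/d,a_3/d\rangle$. The integers $a_2/d$ and $a_3/d$ are distinct, coprime, both greater than $1$, and both less than $4$ (since $a_2,a_3>4$). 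Hence they are $2$ and $3$, and neither $4=2\cdot2$ nor any other combination avoids making one of the original generators redundant. Indeed, with $\{a_2/d,a_3/d\}=\{2,3\}$, the only representation $4=2+2$ gives $a_2/d=2$ and $4=2(a_2/d)$. I will check whether this contradicts minimality. If not, I will argue directly that one of $a_2,a_3$ is even via the Apéry set below, which handles every case uniformly. Either way, we conclude that one generator is even, and the other is then odd by the gcd condition.

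Conversely, suppose $a_2=2p$ is even; then $p$ is odd, since $4\nmid a_2$, and $a_3$ is odd. Take the pair $\{4,2p\}$, which has $d=2$. Then $a_3\in\langle 2,p\rangle$ holds because every integer at least $p-1$ lies in $\langle2,p\rangle$, and any odd $a_3\ge p$ does too. If $a_3<p$, we reorder so that $a_3$ plays the role of the second generator. To avoid this, note that the odd element $a_3$ with $a_3<2p$ and $a_3 \notin\langle 4\rangle$ means the only issue is $a_3<p$. In that case $2p$ would have to be non-redundant: $2p\notin\langle4,a_3\rangle$. But $2p-a_3$ is odd, so $2p=ma_3+4n$ with $m$ even, $m\ge2$, which is possible only if $2p\ge 2a_3$. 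Here I expect some care, and the cleanest fix is to check the statement $a_3\in\langle 2,p\rangle$ directly: it holds iff $a_3\ge p$, since $a_3$ is odd. If $a_3<p$, then $2a_3<2p$ and $2p-2a_3\equiv 2p-2a_3 \pmod 4$. Since $p$ and $a_3$ are both odd, $2p-2a_3\equiv0\pmod4$, so $2p\in\langle4,a_3\rangle$, contradicting $\embdim(H)=3$. Hence $a_3\ge p$, $a_3\in\langle2,p\rangle$, and $H$ is symmetric.

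Finally, writing $a_3=p+2q$ with $q\ge1$ is now immediate, since $a_3$ and $p$ are odd and $a_3>p$, so $H=\langle4,2p,p+2q\rangle$. Moreover $p\ge3$, because $p=1$ gives $2p=2<4$, contradicting $\mult(H)=4$. The main obstacle is the forward direction: excluding a symmetric presentation using the pair $\{a_2,a_3\}$ with both odd. As a cleaner alternative for that direction, I would compute $\Ap(H,4)=\{0,c_1,c_2,c_3\}$ and use the symmetry $c_1+c_2=c_3$. If $a_2$ and $a_3$ are both odd, they occupy the two odd residues mod $4$. Then $c_3\equiv2 \pmod 4$ is $\min(2a_2,2a_3,a_2+a_3)$, and symmetry forces $c_3=a_2+a_3$. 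This requires $a_2+a_3\le 2\min(a_2,a_3)$, so $a_2=a_3$, which is impossible.
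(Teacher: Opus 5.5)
The proposal has a real gap in the forward direction; neither of your two versions is complete as written. Your converse and the normal form $H=\langle 4,2p,p+2q\rangle$ follow the paper's argument: $p$ is odd since $4\nmid 2p$, and $a_3\le p$ is impossible since $2p=2a_3+4\cdot\frac{p-a_3}{2}\in\langle 4,a_3\rangle$. One small point there: you exclude only $a_3<p$ and then use $a_3>p$ to get $q\ge1$. The case $a_3=p$ must also be ruled out, which is immediate because then $2p=2a_3$.

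\textbf{The Herzog version.} You use Theorem~\ref{thm:herzog}(1) and claim that $a_2/d$ and $a_3/d$ are both less than $4$. This does not follow from $a_2,a_3>4$, and it is false. For $\langle 4,6,15\rangle$ the pair $\{6,15\}$ has $d=3$, quotients $2$ and $5$, and $4=2\cdot 2$. You then leave the conclusion open (``I will check whether this contradicts minimality''), but no such check is needed. Both quotients exceed $1$, since otherwise one generator divides the other. Writing $4$ as a combination of them therefore forces one quotient to be $2$ or $4$. That quotient is even, so the corresponding $a_i$ is even, which is exactly the conclusion you want. Your own deduction $a_2/d=2$ already gives $a_2=2d$. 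This one-line parity observation is the paper's proof.

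\textbf{The Ap\'ery version.} You assume, without justification, that the largest element of $\Ap(H,4)$ is the one congruent to $2$ modulo $4$. This is not automatic: for $\langle 4,5,11\rangle$ one has $\Ap(H,4)=\{0,5,10,11\}$, whose maximum is odd. Under your hypotheses (both generators odd, $H$ symmetric), it is precisely the case that cannot occur. By minimality, $a_2$ and $a_3$ lie in distinct odd classes, so $a_2+a_3\equiv 0\pmod 4$; in particular $a_2+a_3$ is not a class-$2$ candidate. If the largest Ap\'ery element were in class $2$, the two middle ones would lie in classes $1$ and $3$, and their sum would not be congruent to $2$ modulo $4$.

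So the case you treat is disposed of correctly, and even more simply by residues. The case you omit, where the maximum is an odd generator, is the one that needs an argument. It is settled by minimality: if $a_2<a_3$ and $a_3$ is the maximum, then $\Ap(H,4)=\{0,a_2,2a_2,a_3\}$, and symmetry gives $a_3=3a_2$, a contradiction. You should also say why $a_2$ and $a_3$ are themselves Ap\'ery elements lying in distinct odd classes; again this is by minimality.

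With these repairs, the Ap\'ery route is a valid alternative that avoids Herzog's criterion in the forward direction. The paper's parity argument is considerably shorter.
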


\begin{proof}
	Suppose that $H$ is symmetric and that both $a_2$ and $a_3$ are odd. 
	Then, by Theorem~\ref{thm:herzog} (1), $d = \gcd(a_2, a_3) \ge 2$ and $4 \in \left<\frac{a_2}{d}, \frac{a_3}{d}\right>$. For each $i=2,3$, $\frac{a_i}{d}$ cannot equal $1$, since otherwise $\embdim(H) < 3$. 
	Since both $\frac{a_2}{d}$ and $\frac{a_3}{d}$ are greater than one, an expression of $4$ as a non-negative integral combination of them forces one of them to be either $2$ or $4$. 
	Hence $\frac{a_i}{d}$ must be even for some $i=2,3$, which implies that $a_i$ is even; this is a contradiction.

	Conversely, suppose that one of $a_2$ and $a_3$ is even.  Denote the even
	generator by $2p$ and the other generator by $a$.  Then $a$ is odd, since
	$\gcd(4,a_2,a_3)=1$.  The integer $p$ is also odd, since otherwise $2p$
	would be a multiple of $4$, contrary to the minimality of the generating
	set.  In particular, $p\geq3$.
	Suppose that $a\leq p$.  Since $a$ and $p$ are odd, we have
	$2p=2a+4(p-a)/2\in\left<4,a\right>$, again contradicting minimality.
	Therefore $a>p$, so $a=p+2q$ for some $q\geq1$.  In particular,
	$a\in\left<2,p\right>$, and Theorem~\ref{thm:herzog}(1) shows that $H$ is
	symmetric.
\end{proof}

\subsection{Strategy for Cohen--Macaulayness}
\label{subsec:cm-strategy}

We next explain the strategy used to prove the Cohen--Macaulayness of the Rees algebra.

For a commutative ring $A$ and an ideal $J$ of $A$, put
$$
\calR_A(J)=A[JT]=\bigoplus_{n\geq0}J^nT^n\subseteq A[T], \qquad \gr_J(A)=\bigoplus_{n\geq0}J^n/J^{n+1}.
$$
where $T$ is an indeterminate over $A$.

Throughout this subsection, let $S=k[x,y,z]$, let $\m=(x,y,z)$, and let $I$ be an $\m$-primary homogeneous ideal of $S$. 
For simplicity, $\calR_S(I)$ will be denoted simply as $\calR(I)$ in what follows.

Our strategy is the standard one of proving that the reduction number is one.  
However, even when $I$ is a monomial ideal, it can be quite difficult to find a minimal reduction of $I$ generated by $3=\dim S$ elements, even in situations where the existence of such a reduction is clear, for example when $k$ is infinite. The ideals considered in this paper are not monomial.
We therefore seek a three-generated ideal whose localization at $\m$ is a reduction of $IS_\m$, and verify directly that the local reduction number is one.

The following proposition shows that this local calculation suffices to prove that $\calR(I)$ is Cohen--Macaulay.  We state it in three variables, which is the setting of this paper, although the same proof works in $n$ variables.

\begin{proposition}
\label{prop:local-reduction-criterion}
Suppose that there is a three-generated ideal $Q\subseteq I$ such that
\[
 I^2=QI+\m I^2.
\]
Then $\calR(I)$ is Cohen--Macaulay.
\end{proposition}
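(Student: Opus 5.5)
The plan is to localize at $\m$, apply the classical reduction-number-one criterion for Cohen--Macaulayness of Rees algebras over a regular local ring, and then globalize using the grading.

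\emph{Step 1: reduce to the local ring.} Put $R=S_\m$, $\mathfrak n=\m R$, and regard $IR$ and $QR$ as ideals of $R$. Localizing the hypothesis gives $(IR)^2=QR\cdot IR+\mathfrak n(IR)^2$. Since $(IR)^2$ is finitely generated, Nakayama's lemma yields $(IR)^2=QR\cdot IR$. Hence $QR$ is a reduction of $IR$ with reduction number at most one. Because $IR$ is $\mathfrak n$-primary and $\dim R=3$, the radical of $QR$ is $\mathfrak n$. So the three generators of $QR$ form a system of parameters, and therefore a regular sequence in the regular local ring $R$.

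\emph{Step 2: apply the local criterion.} For an $\mathfrak n$-primary ideal in a Cohen--Macaulay local ring of dimension $d\ge1$ with a minimal reduction $Q$ satisfying $I^2=QI$, the associated graded ring $\gr_I(R)$ is Cohen--Macaulay (Valla, Goto--Shimoda). Its $a$-invariant is $r-d=1-3<0$. The Goto--Shimoda criterion then shows that $\calR_R(IR)=\calR(I)_\m$ is Cohen--Macaulay. Alternatively, Johnston--Katz gives this directly from reduction number $\le d-1$ in a Cohen--Macaulay local ring.

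\emph{Step 3: globalize.} The ring $\calR(I)$ is $\mathbb N^2$-graded, or at least positively graded once the total degree of $S$ is combined with the $T$-degree. Its graded maximal ideal is $M=\m\calR(I)+I T\calR(I)$. A finitely generated positively graded algebra over the field $k=\calR(I)_0$ is Cohen--Macaulay if and only if its localization at the unique homogeneous maximal ideal is Cohen--Macaulay. So it suffices to show that $\calR(I)_M$ is Cohen--Macaulay. Moreover, $\calR(I)_M$ is a localization of $\calR(I)_\m=\calR_R(IR)$: elements of $S\setminus\m$ lie outside $M$, since $M\cap S=\m$. Localizations of Cohen--Macaulay rings are Cohen--Macaulay, so Step 2 finishes the proof.

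\emph{Main obstacle.} The delicate point is the globalization in Step 3. We must grade $S$ by the weighted grading that makes $I$ homogeneous, so that $\calR(I)$ is positively graded with degree-zero part exactly $k$ and a unique homogeneous maximal ideal $M$. We then invoke the standard fact (Bruns--Herzog, Exercise 2.1.27) that Cohen--Macaulayness of such a graded ring is detected at $M$. The local steps are classical and only need to be cited carefully. Note in particular that no infinite residue field is needed, because $Q$ is given explicitly.
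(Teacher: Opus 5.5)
Your proof is correct. Steps 1 and 2 coincide with the paper's argument: Nakayama at $S_\m$, then Valabrega--Valla and Goto--Shimoda. The difference lies in how you pass back from $S_\m$ to $S$.

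\textbf{The paper's globalization.} It argues prime by prime. For $P\in\Spec\calR(I)$ with $\fkp=P\cap S$, the ring $\calR(I)_P$ is a localization of $\calR_{S_\fkp}(IS_\fkp)$. If $\fkp=\m$, this is the local Rees algebra already shown to be Cohen--Macaulay. If $\fkp\neq\m$, then $IS_\fkp=S_\fkp$ because $\sqrt I=\m$ is maximal, so the ring is the polynomial ring $S_\fkp[T]$. This uses no grading at all. It therefore proves the proposition for every $\m$-primary ideal of $S$, homogeneous or not, and needs only that localizations of Cohen--Macaulay rings are Cohen--Macaulay.

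\textbf{Your globalization.} You instead use the standing homogeneity hypothesis. With $\deg(FT^n)=\deg F+n$, the Rees algebra is positively graded with degree-zero part $k$, so it has a unique homogeneous maximal ideal $M$. You then invoke the graded-local principle (Bruns--Herzog) to reduce to the single ring $\calR(I)_M$. You correctly note that this is a localization of $\calR_{S_\m}(IS_\m)$, since $M\cap S=\m$.

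\textbf{Comparison.} Your route is equally short and avoids looking at primes away from $\m$. However, it depends on a positive grading making $I$ homogeneous, and it uses a somewhat heavier citation. The paper's argument is more elementary and more general.

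\textbf{Minor points.} First, $\sqrt{QR}=\mathfrak n$ holds because $QR$ is a reduction of $IR$, so $(IR)^2\subseteq QR$; it does not follow from $\mathfrak n$-primariness of $IR$ alone. Second, the $a$-invariant is $r-3$ with $r\le 1$, and $r$ may be $0$. Third, your Johnston--Katz aside needs $\gr_I(R)$ to be Cohen--Macaulay. That hypothesis does hold here, because the reduction number is at most one.
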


\begin{proof}
Put $A=S_\m$.  The assumed equality
gives
\[
 (IA)^2=(QA)(IA)+(\m A)(IA)^2,
\]
and hence Nakayama's lemma yields $(IA)^2=(QA)(IA)$.  Thus $QA$ is a
reduction of the $\m A$-primary ideal $IA$.  Since $Q$ is generated by three
elements and $\dim A=3$, the ideal $QA$ is a parameter ideal for $A$ and
hence a minimal reduction of $IA$.
By the Valabrega--Valla criterion \cite[Proposition~3.1]{ValabregaValla1978}, the
associated graded ring $\gr_{IA}(A)$ is Cohen--Macaulay.  Since the reduction
number of $IA$ with respect to $QA$ is at most one, it is at most
$\dim A-1=2$.  The Goto--Shimoda criterion
\cite[Theorem~3.1]{GotoShimoda1982} now shows
that $\calR_A(IA)$ is Cohen--Macaulay.

It remains to pass back to $S$.  Let $P\in\Spec\calR(I)$ and put
$\fkp=P\cap S$.  The ring $\calR(I)_P$ is a localization of
$\calR_{S_\fkp}(IS_\fkp)$.  If $\fkp=\m$, $\calR(I)_P$ is a localization of the
Cohen--Macaulay ring $\calR_A(IA)$.  If $\fkp\ne\m$, then
$IS_\fkp=S_\fkp$, since $\sqrt I=\m$.  Hence
\[
 \calR_{S_\fkp}(IS_\fkp)=S_\fkp[T],
\]
which is Cohen--Macaulay.  Thus $\calR(I)$ is Cohen--Macaulay.
\end{proof}

To prove the Cohen--Macaulay part of Theorem~\ref{thm:main}, we give a three-generated ideal $Q\subseteq I$ for each ideal $I$ under consideration.
After writing
\[
 I=Q+(f_1,f_2,\ldots,f_r),
\]
it is enough to check that $f_if_j\in QI+\m I^2$ for every $1\leq i\leq j\leq r$.  These inclusions give $I^2=QI+\m I^2$, and Proposition~\ref{prop:local-reduction-criterion} then shows that $\calR(I)$ is Cohen--Macaulay.

\subsection{Strategy for Normality}
\label{subsec:normality}

We now describe our strategy for proving the normality of the ideals in the families under consideration.

Let $S=k[x_1,\ldots,x_n]$ have a positive weighted grading, let
$\m=(x_1,\ldots,x_n)$, and let $I=(g_1,\ldots,g_\mu)$ be an
$\m$-primary homogeneous ideal with this minimal system of generators;
thus $\mu=\mu_S(I)$. Suppose that there is a homogeneous element
$f$ of degree $\ell$ such that $J=I+(f)$ is normal and $\m f\subseteq I$.
In the applications below, $g_1=f-f'$ is a binomial with monomial terms
$f,f'$, and $J=I+(f)=I+(f')$ is an integrally closed monomial ideal
with at most seven generators. Its normality follows from
\cite{AtakaMatsuoka2026}; the inclusion $\m f\subseteq I$ is checked
for each family. The argument below does not depend on the number of generators of $I$.

Suppose, to the contrary, that $I^s$ is not integrally closed for some $s\geq1$.
Since the integral closure of a homogeneous ideal is again homogeneous, 
we may take a homogeneous element
$\xi\in\ol{I^s}\setminus I^s$
of degree $\delta$. 
Then the normality of $J$ gives
$$
 \xi\in \ol{I^s} \subseteq \ol{J^s} = J^s
 = I^s + fI^{s-1} + \cdots + (f^s).
$$
Now choose the least positive integer $m$ such that
$$
 \xi\in I^s+fI^{s-1}+\cdots+f^m I^{s-m}.
$$
We then have
$$
 \xi=\xi_0+f^m\omega,
$$
for some $\xi_0\in \fka := I^s+fI^{s-1}+\cdots+f^{m-1}I^{s-m+1}$ and $\omega\in I^{s-m}$.
Since $\omega$ is homogeneous of degree $\delta-m\ell$, we may choose homogeneous elements $c_\lambda\in S$, indexed by $\lambda=(\lambda_1,\lambda_2,\ldots,\lambda_{\mu})\in\mathbb N^{\mu}$ with $\lambda_1+\lambda_2+\cdots+\lambda_{\mu}=s-m$.  For such a multi-index, write $g^\lambda=g_1^{\lambda_1}g_2^{\lambda_2}\cdots g_{\mu}^{\lambda_{\mu}}$.  Then
$$
\omega=\sum_{\lambda}c_\lambda g^\lambda,
$$
where
$$
\deg c_\lambda+\lambda_1\deg g_1+\lambda_2\deg g_2+\cdots+\lambda_{\mu}\deg g_{\mu}=\delta-m\ell
$$
whenever $c_\lambda\ne0$.  Since $\m f\subseteq I$, we have
$$
f^m\m I^{s-m}\subseteq f^{m-1}I^{s-m+1}\subseteq\fka.
$$
Thus, after absorbing into $\xi_0$ all terms for which $c_\lambda\in\m$, we may assume that every $c_\lambda$ belongs to $k$.  Consequently, only products of weighted degree $\delta-m\ell$ remain.  For $p,q\in\mathbb N$, put
$$
\begin{aligned}
\Lambda^p_q
=\{\lambda = (\lambda_1,\lambda_2,\ldots,\lambda_{\mu})\in\mathbb N^{\mu}\mid {}&
\lambda_1+\lambda_2+\cdots+\lambda_{\mu}=p,\\
&\lambda_1\deg g_1+\lambda_2\deg g_2+\cdots+\lambda_{\mu}\deg g_{\mu}=q\}.
\end{aligned}
$$
We may now write
$$
\omega=\sum_{\lambda\in\Lambda^{s-m}_{\delta-m\ell}}c_\lambda g^\lambda.
$$

If $fg_i\in I^2$ for some $i$, then, for every $\lambda\in\Lambda^{s-m}_{\delta-m\ell}$ with $\lambda_i>0$, we have $c_\lambda f^m g^\lambda\in\fka$.  By absorbing this term into $\xi_0$, we may therefore replace $c_\lambda$ by zero.
Similarly, if $fg_ig_j\in I^3$ for some $i\ne j$, we may take $c_\lambda=0$ whenever $\lambda_i>0$ and $\lambda_j>0$.  In this way, we restrict the set over which $\lambda$ ranges in each case and denote the resulting subset of $\Lambda^{s-m}_{\delta-m\ell}$ by $\Lambda$.  Thus
$$
\omega = \sum_{\lambda \in \Lambda} c_\lambda g^\lambda.
$$
More generally, suppose that two products $g^\lambda$ and $g^{\lambda'}$
have the same number of factors and the same weighted degree.  If
$g^\lambda-g^{\lambda'}\in\m I^{s-m}$, then
$f^m(g^\lambda-g^{\lambda'})\in\fka$, so either product may be replaced by
the other in $\omega$ without changing $f^m\omega$ modulo $\fka$.
The preceding reductions are not exhaustive.  In some cases, further adjustments of the exponent vectors are made using relations among products of the generators.  All such adjustments, together with the resulting set $\Lambda$, will be specified explicitly in each case.

The remaining step is to separate the coefficients.  We first fix the data
arising from the preceding reduction.  Let $k$ be an infinite field, let
$n,\mu\geq1$ be integers, and let $S=k[x_1,x_2,\ldots,x_n]$.  Let
$I=(g_1,g_2,\ldots,g_\mu)$ be an ideal of $S$, and let $f\in S$.  For
$\lambda=(\lambda_1,\lambda_2,\ldots,\lambda_\mu)\in\mathbb N^\mu$, write
$|\lambda|=\lambda_1+\lambda_2+\cdots+\lambda_\mu$.
Choose integers $s,m$ with $1\leq m\leq s$, a subset
$\Lambda\subseteq\{\lambda\in\mathbb N^\mu\mid |\lambda|=s-m\}$,
an element $\xi_0\in\sum_{j=0}^{m-1}f^jI^{s-j}$, and coefficients
$c_\lambda\in k$ for $\lambda\in\Lambda$.  Put
$$
 \omega=\sum_{\lambda\in\Lambda}c_\lambda g^\lambda,
 \qquad \xi=\xi_0+f^m\omega,
$$
and suppose that $\xi\in\ol{I^s}$.

The essential step is to construct one-variable maps that separate the
remaining products.  We record the resulting criterion.

\begin{lemma}\label{lem:coefficient-separation}
Let $a$ and $\nu$ be positive integers.
For $\varepsilon=(\varepsilon_1,\varepsilon_2,\ldots,\varepsilon_a)
\in(k^\times)^a$ and $\alpha=(\alpha_1,\alpha_2,\ldots,\alpha_a)
\in\mathbb Z^a$, write
$\varepsilon^\alpha=\varepsilon_1^{\alpha_1}\varepsilon_2^{\alpha_2}
\cdots\varepsilon_a^{\alpha_a}$.
For each $\varepsilon\in(k^\times)^a$, suppose that there is a
$k$-algebra homomorphism $\psi_\varepsilon:S\to k[[t]]$ such that
\[
 \ord_t\psi_\varepsilon(f)=\nu-1,\qquad
 \ord_t\psi_\varepsilon(g_i)\geq\nu\quad(1\leq i\leq\mu).
\]
Here, $\ord_t$ denotes the discrete valuation of $k[[t]]$.
Suppose that the set
\[
 G_\varepsilon=\{i\in\{1,\ldots,\mu\}\mid
 \ord_t\psi_\varepsilon(g_i)=\nu\}
\]
is independent of $\varepsilon$, and denote this common set by $G$.
Suppose further that, for each $i\in G$, there are
$u_i\in k^\times$ and $\alpha_i\in\mathbb Z^a$ such that, for every
$\varepsilon\in(k^\times)^a$,
\[
 \psi_\varepsilon(g_i)
 =u_i\varepsilon^{\alpha_i}t^\nu+\text{(higher-order terms)}.
\]
For $\lambda\in\mathbb N^\mu$, write
\[
 \operatorname{supp}\lambda
 =\{i\in\{1,\ldots,\mu\}\mid\lambda_i>0\}.
\]
Put $\Lambda_G=\{\lambda\in\Lambda\mid\operatorname{supp}\lambda\subseteq G\}$.
If $\lambda\mapsto\sum_{i\in G}\lambda_i\alpha_i$ is injective on
$\Lambda_G$, then $c_\lambda=0$ for every $\lambda\in\Lambda_G$.
\end{lemma}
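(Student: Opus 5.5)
The plan is to push the relation $\xi=\xi_0+f^m\omega$ through each $\psi_\varepsilon$ and compare $t$-orders. Everything then reduces to the statement that a certain Laurent polynomial in $\varepsilon$ vanishes on all of $(k^\times)^a$.

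\emph{Lower bound from integrality.} Since $\xi\in\ol{I^s}$, the element $\xi$ satisfies an equation $\xi^N+b_1\xi^{N-1}+\cdots+b_N=0$ with $b_j\in I^{sj}$. Applying $\psi_\varepsilon$ gives $\ord_t\psi_\varepsilon(b_j)\geq sj\nu$, because every generator $g_i$ has order at least $\nu$. The standard valuation argument then yields $\ord_t\psi_\varepsilon(\xi)\geq s\nu$. Indeed, if $\ord_t\psi_\varepsilon(\xi)<s\nu$, then the term $\psi_\varepsilon(\xi)^N$ would have strictly smaller order than every other term of the equation, which is impossible.

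\emph{Upper bound for the correction term.} Each $\xi_0\in f^jI^{s-j}$ with $j\leq m-1$ maps to an element of order at least $j(\nu-1)+(s-j)\nu=s\nu-j\geq s\nu-m+1$. Hence $\ord_t\psi_\varepsilon(\xi_0)\geq s\nu-m+1$, and the same bound holds for $\psi_\varepsilon(\xi)$. It follows that $\psi_\varepsilon(f^m\omega)$ has order at least $s\nu-m+1$.

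\emph{Extracting the coefficient.} Write $\psi_\varepsilon(f)=v(\varepsilon)t^{\nu-1}+\cdots$ with $v(\varepsilon)\neq0$. Then $\psi_\varepsilon(f^m\omega)=v(\varepsilon)^m t^{m(\nu-1)}\psi_\varepsilon(\omega)$. Since $m(\nu-1)+(s-m)\nu=s\nu-m$, the previous bound forces the coefficient of $t^{(s-m)\nu}$ in $\psi_\varepsilon(\omega)$ to vanish. Every $g^\lambda$ with $|\lambda|=s-m$ has order at least $(s-m)\nu$. Its coefficient of $t^{(s-m)\nu}$ is nonzero only if $\operatorname{supp}\lambda\subseteq G$, and in that case it equals $\prod_{i\in G}u_i^{\lambda_i}\,\varepsilon^{\sum_i\lambda_i\alpha_i}$. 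We therefore obtain
\[
 \sum_{\lambda\in\Lambda_G}c_\lambda\Bigl(\prod_{i\in G}u_i^{\lambda_i}\Bigr)\varepsilon^{\sum_{i\in G}\lambda_i\alpha_i}=0
 \quad\text{for all }\varepsilon\in(k^\times)^a .
\]

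\emph{Conclusion and main obstacle.} By the injectivity hypothesis, the monomials $\varepsilon^{\sum\lambda_i\alpha_i}$ appearing here are pairwise distinct. Clearing denominators by a common monomial turns the left-hand side into a polynomial that vanishes on $(k^\times)^a$. Because $k$ is infinite, as arranged at the start of Section~\ref{sec:general-setup}, this polynomial is identically zero. Hence every coefficient $c_\lambda\prod u_i^{\lambda_i}$ is zero, and so $c_\lambda=0$ for every $\lambda\in\Lambda_G$.

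The only delicate point is the integrality step. One has to check that $\ord_t$ on $k[[t]]$ behaves as a valuation even though $\psi_\varepsilon$ need not be injective. If $\psi_\varepsilon(\xi)=0$, the bound $\ord_t\psi_\varepsilon(\xi)\geq s\nu$ holds trivially. Otherwise the dominance argument above applies verbatim.
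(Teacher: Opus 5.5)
Your proposal is correct and follows essentially the paper's argument: push $\xi=\xi_0+f^m\omega$ through $\psi_\varepsilon$, deduce $\ord_t\psi_\varepsilon(\omega)\geq(s-m)\nu+1$, and conclude from the vanishing of the coefficient of $t^{(s-m)\nu}$. That coefficient is a Laurent polynomial in $\varepsilon$ whose monomials are distinct by injectivity, so it vanishes identically over the infinite field $k$. The only differences are cosmetic. You check $\ord_t\psi_\varepsilon(\xi)\geq s\nu$ by the dominant-term argument rather than citing that ideals of the DVR $k[[t]]$ are integrally closed. Also, your identity $\psi_\varepsilon(f^m\omega)=v(\varepsilon)^mt^{m(\nu-1)}\psi_\varepsilon(\omega)$ holds only up to a unit of $k[[t]]$ with constant term $1$, which does not affect the coefficient comparison.
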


\begin{proof}
Applying $\psi_\varepsilon$ to an equation of integral dependence for
$\xi$ shows that $\psi_\varepsilon(\xi)$ is integral over
$\psi_\varepsilon(I^s)k[[t]]$. Since every ideal of the discrete valuation
ring $k[[t]]$ is integrally closed, we have
\[
 \psi_\varepsilon(\xi)\in\psi_\varepsilon(I^s)k[[t]]
 \subseteq(t^{s\nu}).
\]
On the other hand, since
 $
 \ord_t\psi_\varepsilon(\xi_0)\geq s\nu-m+1.
 $
As $m\geq1$, the equality $\xi=\xi_0+f^m\omega$ therefore gives
\[
 \ord_t\psi_\varepsilon(\omega)\geq(s-m)\nu+1.
\]
The coefficient of $t^{(s-m)\nu}$ in $\psi_\varepsilon(\omega)$ is thus zero:
\[
 \sum_{\lambda\in\Lambda_G}
 c_\lambda\left(\prod_{i\in G}u_i^{\lambda_i}\right)
 \varepsilon^{\sum_{i\in G}\lambda_i\alpha_i}=0.
\]
This holds for every $\varepsilon\in(k^\times)^a$, so it is an identity
of Laurent polynomials over the infinite field $k$. Injectivity makes
the parameter monomials distinct; since each $u_i$ is nonzero, the
asserted coefficients vanish.
\end{proof}

In our applications, $\Lambda$ also has fixed weighted degree
$\sum_i\lambda_i\deg g_i=\delta-m\ell$. The parameter exponents,
together with this equality and $|\lambda|=s-m$, determine $\lambda$;
we verify this injectivity separately in each case.
If one map has $\Lambda_G=\Lambda$, the lemma gives $\omega=0$,
contrary to the minimality of $m$. If several maps are needed, we apply
the lemma successively to the remaining coefficients and check that
the subsets treated by these maps cover $\Lambda$.

\section{Proof of Theorem B: The ideals arising from numerical semigroup rings}
\label{sec:target-ideals}

In this section, we give a proof of Theorem B. 
Throughout the remainder of this paper, let
$H=\langle a_1,a_2,a_3\rangle$ be a numerical semigroup of embedding
dimension three, and retain the notation $S$, $\varphi_H$, and $\fkp_H$
introduced in Subsection~\ref{subsec:basicnotion}.  Put $R=k[H]$, and let $\fkm=(t^{a_1},t^{a_2},t^{a_3})R$ denote the irrelevant maximal ideal of $R$.
For $h\in\mathbb N$, define
$$
 I_h=\varphi_H^{-1}\bigl(t^hk[t]\cap R\bigr).
$$
Equivalently, $I_h$ is generated by $\fkp_H$ together with all monomials of
weighted degree at least $h$.
Let
\[
 \ell=\min\{\deg\rho\mid
 \rho\in\fkp_H\setminus\{0\}\text{ is homogeneous}\}.
\]

Although $I_{\ell+1}$ is defined as an inverse image, this description does
not control its powers.  Indeed,
$\varphi_H^{-1}((t^{\ell+1}k[t]\cap R)^s)=I_{\ell+1}^s+\fkp_H$, which need not
equal $I_{\ell+1}^s$.  In particular, a relation in $\fkp_H$ of degree
$\ell$ belongs to $\varphi_H^{-1}((t^{\ell+1}k[t]\cap R)^2)$ but not to
$I_{\ell+1}^2$, whose nonzero homogeneous elements have degree at least
$2\ell$.  Thus the normality of $I_{\ell+1}$ cannot be deduced merely by
taking inverse images of the powers of $t^{\ell+1}k[t]\cap R$.

\begin{proposition}
\label{prop:inverse-image-ideals}
The following assertions hold.
\begin{enumerate}
	\item The ideal $I_h$ is integrally closed.
	\item If $H$ is symmetric, then $\mu_S(I_h) \le \mult(H) + 2$.
	\item If $H$ is not symmetric, then $\mu_S(I_h) \le \mult(H) + 3$.
	\item $I_h$ is a monomial ideal if and only if $h\leq\ell$.
\end{enumerate}
\end{proposition}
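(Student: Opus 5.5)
The plan is to work throughout with the description of $I_h$ as generated by $\fkp_H$ together with the monomials of weighted degree at least $h$. Each of the four assertions is then deduced from the structure of $R=k[H]$ as a monoid algebra.

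For (1), I argue by transport of integral dependence.
\begin{enumerate}
 \item[(a)] Let $\xi\in S$ be integral over $I_h$. Applying $\varphi_H$ to an equation of integral dependence shows that $\varphi_H(\xi)$ is integral over $\varphi_H(I_h)$.
 \item[(b)] Since $\varphi_H$ is surjective, $\varphi_H(I_h)=t^hk[t]\cap R$. Hence $\varphi_H(\xi)$ is integral over $t^hk[t]$ inside $k[t]$.
 \item[(c)] The principal ideal $t^hk[t]$ of the normal domain $k[t]$ is integrally closed. So $\varphi_H(\xi)\in t^hk[t]\cap R$, and therefore $\xi\in I_h$.
\end{enumerate}
This is the argument sketched in the introduction.

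For (2) and (3), I count generators in two pieces: those of $\fkp_H$, and those of the ideal $\mathfrak b_h=t^hk[t]\cap R$ of $R$. By Theorem~\ref{thm:herzog}, $\fkp_H$ is generated by two binomials if $H$ is symmetric and by three otherwise. For $\mathfrak b_h$, put $e=\mult(H)$ and note that $\mathfrak b_h$ is the $k$-span of the $t^n$ with $n\in H$ and $n\geq h$. For each residue class $i$ modulo $e$, let $n_i$ be the least element of $H$ with $n_i\geq h$ and $n_i\equiv i\pmod e$. Every admissible $n$ in that class has the form $n_i+je$, so $t^n=(t^e)^jt^{n_i}$. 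Thus the $e$ monomials $t^{n_i}$ generate $\mathfrak b_h$, already as a $k[t^e]$-module. Lifting each $t^{n_i}$ to a monomial of $S$ of degree $n_i$ gives
\[
I_h=\fkp_H+(\text{these $e$ monomials}),
\]
which yields $\mu_S(I_h)\leq e+2$ in the symmetric case and $\mu_S(I_h)\leq e+3$ in the non-symmetric case.

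For (4), I treat the two directions separately.
\begin{enumerate}
 \item[(a)] Suppose $h\leq\ell$. Every Herzog binomial generator is homogeneous of degree at least $\ell\geq h$, so both of its monomial terms have degree at least $h$. Hence the binomials lie in the monomial ideal generated by the monomials of degree at least $h$, and $I_h$ is that monomial ideal.
 \item[(b)] Suppose $h>\ell$. The degree-$\ell$ component of $\fkp_H$ is the kernel of $S_\ell\to kt^\ell$, which sends every monomial to $t^\ell$. It therefore consists of the combinations $\sum c_uu$ with $\sum c_u=0$, so it is spanned by differences of monomials, and it is nonzero by the definition of $\ell$. Pick such a binomial $\rho=u-v\in I_h$.
 \item[(c)] If $I_h$ were monomial, then $u\in I_h$, so $t^\ell=\varphi_H(u)\in t^hk[t]$. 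This contradicts $\ell<h$.
\end{enumerate}

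The only slightly delicate point is this last step. It relies on the homogeneous components of a toric kernel being spanned by binomials, and on the fact that a monomial ideal contains every term of each of its elements.
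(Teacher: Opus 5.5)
Your proof is correct and follows the paper's argument for all four parts. Part (1) uses the contraction and inverse image of the integrally closed ideal $t^hk[t]$; parts (2) and (3) count generators via the least elements of $H$ above $h$ in each residue class modulo $\mult(H)$ together with Herzog's presentation of $\fkp_H$; and part (4) uses a degree-$\ell$ binomial in $\fkp_H$, whose existence you justify explicitly via the kernel of $S_\ell\to kt^\ell$, whereas the paper takes it for granted.
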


\begin{proof}
(1) The ideal $t^hk[t]$ is integrally closed in $k[t]$.  Its contraction
$t^hk[t]\cap k[H]$ is therefore integrally closed in $k[H]$, and its inverse
image $I_h$ is integrally closed in $S$.

For (2) and (3), the case $h=0$ is immediate, since $I_0=S$. Assume that $h>0$, and put $J=t^hk[t]\cap R$. Then $J$ is an $\fkm$-primary ideal of $R$.  For each $0 \le i \le \mult(H)-1$, choose the least element $h_i$ of $H$ such that $h_i \equiv i \pmod{\mult(H)}$ and $h_i \ge h$.  Then $J=(t^{h_i} \mid 0 \le i \le \mult(H)-1)$. Hence $\mu_R(J)\leq\mult(H)$.
Choose generators $f_1,f_2,\ldots,f_q$ of $J$ with $q\leq\mult(H)$, and choose $g_1,g_2,\ldots,g_q\in S$ such that $\varphi_H(g_i)=f_i$ for every $1\leq i\leq q$.  Since $I_h=\varphi_H^{-1}(J)$, we have
\[
 I_h=\fkp_H+(g_1, g_2, \ldots , g_q).
\]
By Theorem~\ref{thm:herzog}, $\fkp_H$ can be generated by two elements if $H$ is symmetric and by three elements otherwise.  Consequently, $\mu_S(I_h)\leq\mult(H)+2$ in the symmetric case and $\mu_S(I_h)\leq\mult(H)+3$ in the non-symmetric case.

(4) Let $L_h$ be the monomial ideal generated by all monomials of weighted degree
at least $h$.  Then
\[
 I_h=\fkp_H+L_h.
\]
If $h\leq\ell$, every homogeneous element of $\fkp_H$ belongs to $L_h$.
Hence $I_h=L_h$ is a monomial ideal.
Conversely, suppose that $h>\ell$, and choose a nonzero homogeneous binomial in $\fkp_H$ of degree $\ell$.  This binomial belongs to $I_h$, whereas neither of its monomial terms belongs to $I_h$.  Therefore $I_h$ is not a monomial ideal.
\end{proof}

The two types of ideals considered in the main result are both minimally generated by six elements and lie just beyond the monomial case.
In view of Proposition~\ref{prop:inverse-image-ideals} (2) and (3), it is natural to expect that they arise in the following cases.

\begin{itemize}
	\item $H$ has multiplicity three and $h=\ell+1$.
	\item $H$ is symmetric of multiplicity four and $h=\ell+1$.
\end{itemize}

The calculations in the three cases considered in the remainder of this
section show that exactly one defining relation has degree \(\ell\) and that
it remains the unique binomial generator of $I_{\ell+1}$.
For each ideal $I=I_{\ell+1}$ arising in one of these cases, let $f-f'$ denote its unique binomial generator of degree $\ell$.  Since the homogeneous component of $\fkp_H$ of degree $\ell$ is a one-dimensional $k$-vector space spanned by $f-f'$, the monomials $f$ and $f'$ are the only monomials of weighted degree $\ell$.  Passing from $I_{\ell+1}$ to $I_\ell$ amounts precisely to adjoining the degree-$\ell$ monomials $f$ and $f'$.  Since $f-f'\in I_{\ell+1}$, it follows that
$$
I+(f)=I+(f')=I_\ell,
$$
and Proposition~\ref{prop:inverse-image-ideals}(1) shows that this ideal is integrally closed.

We now prove Theorem~\ref{thm:classification}.  In each case, we first determine the Ap\'ery set, the defining ideal $\fkp_H$, and the least relation degree $\ell$.  As in the proof of Proposition~\ref{prop:inverse-image-ideals}, the least elements of $H$ that are at least $\ell+1$ in the residue classes modulo $\mult(H)$ determine the monomial generators of $I_{\ell+1}$ modulo $\fkp_H$.  Combining these monomials with the defining relations gives a generating set of $I_{\ell+1}$.  For the six-generated cases, we then rewrite the parameters to identify the resulting ideals with the two families in the Introduction and give the converse choices of the semigroup parameters.  After treating these two cases, we consider the non-symmetric multiplicity-four case and show that the resulting ideal has seven minimal generators.

Proposition~\ref{prop:inverse-image-ideals} gives only upper bounds for
$\mu_S(I_h)$.  To determine the exact number of generators in the cases
below, we use the following lemma, which separates the contribution of
$\fkp_H$ from that of $t^hk[t]\cap R$.

\begin{lemma}\label{lem:number-of-generators}
For every $h\in\mathbb N$, there is an exact sequence of $k$-vector spaces
\[
0\longrightarrow
\frac{\fkp_H}{\fkp_H\cap(x_1,x_2,x_3)I_h}
\longrightarrow
\frac{I_h}{(x_1,x_2,x_3)I_h}
\longrightarrow
\frac{t^hk[t]\cap R}{\fkm(t^hk[t]\cap R)}
\longrightarrow0.
\]
Consequently,
\[
\mu_S(I_h)
=\dim_k\frac{\fkp_H}{\fkp_H\cap(x_1,x_2,x_3)I_h}
+\mu_R(t^hk[t]\cap R).
\]
In particular, if
\[
\fkp_H\cap(x_1,x_2,x_3)I_h=(x_1,x_2,x_3)\fkp_H,
\]
then
\[
\mu_S(I_h)=\mu_S(\fkp_H)+\mu_R(t^hk[t]\cap R).
\]
\end{lemma}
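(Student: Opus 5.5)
The plan is to obtain the sequence by tensoring the short exact sequence
\[
0\to\fkp_H\to I_h\xrightarrow{\varphi_H} t^hk[t]\cap R\to0
\]
with $S/\m$, where $\m=(x_1,x_2,x_3)$. This sequence is exact because $\fkp_H\subseteq I_h$ (we have $h\geq0$ and $0\in t^hk[t]$) and $\varphi_H$ is surjective onto $R$, so $\varphi_H(I_h)=t^hk[t]\cap R$.

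First we identify the right-hand quotient. Since $\varphi_H(\m I_h)=\varphi_H(\m)\varphi_H(I_h)=\fkm(t^hk[t]\cap R)$, the map $\varphi_H$ induces a surjection
\[
I_h/\m I_h\to(t^hk[t]\cap R)/\fkm(t^hk[t]\cap R).
\]
Its kernel is $(\m I_h+\fkp_H)/\m I_h$: an element $g\in I_h$ maps into $\fkm(t^hk[t]\cap R)=\varphi_H(\m I_h)$ exactly when $g\in\m I_h+\ker\varphi_H$. By the second isomorphism theorem, this kernel is isomorphic to $\fkp_H/(\fkp_H\cap\m I_h)$, and the isomorphism is induced by the inclusion $\fkp_H\to I_h$. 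This gives exactness on the left and in the middle, so the sequence is exact.

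The dimension formula then follows by additivity of $\dim_k$. Here $I_h$ is $\m$-primary and homogeneous (or $I_0=S$), so $\mu_S(I_h)=\dim_k I_h/\m I_h$ by graded Nakayama. Likewise $\mu_R$ of the homogeneous ideal $t^hk[t]\cap R$ equals $\dim_k$ of its quotient by $\fkm$ times itself, since $R$ is positively graded with $R_0=k$.

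For the final assertion, assume $\fkp_H\cap\m I_h=\m\fkp_H$. Then the first term becomes $\fkp_H/\m\fkp_H$, whose dimension is $\mu_S(\fkp_H)$, again by graded Nakayama since $\fkp_H$ is homogeneous.

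No step is a real obstacle; the only points needing care are checking that $\varphi_H(\m I_h)$ equals $\fkm$ times the image, and justifying Nakayama in the graded, non-local setting. The latter follows from homogeneity for a positive grading.
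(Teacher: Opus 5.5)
Your proof is correct and follows the paper's argument: restrict $\varphi_H$ to $I_h$, note $\varphi_H(\m I_h)=\fkm(t^hk[t]\cap R)$, and identify the kernel of the induced map with $(\fkp_H+\m I_h)/\m I_h\cong\fkp_H/(\fkp_H\cap\m I_h)$; you also spell out the kernel computation and the graded Nakayama step, which the paper leaves implicit. One small point: ``tensoring with $S/\m$'' by itself gives only right exactness, so it is your direct kernel computation, not the tensor product, that gives injectivity on the left.
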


\begin{proof}
The restriction of $\varphi_H$ to $I_h$ is surjective onto
$t^hk[t]\cap R$ with kernel $\fkp_H$, and
\[
 \varphi_H\bigl((x_1,x_2,x_3)I_h\bigr)
 =\fkm(t^hk[t]\cap R).
\]
Hence $\varphi_H$ induces the asserted exact sequence, and the formulas for
$\mu_S(I_h)$ follow by taking $k$-dimensions.
\end{proof}

In the applications below, the minimal homogeneous generators
$\rho_1,\rho_2,\ldots,\rho_s$ of $\fkp_H$ have pairwise distinct weighted
degrees.  Since $\fkp_H\cap(x_1,x_2,x_3)I_h$ is homogeneous, to verify the
equality
\[
\fkp_H\cap(x_1,x_2,x_3)I_h=(x_1,x_2,x_3)\fkp_H
\]
it is therefore enough to show that
$\rho_i\notin(x_1,x_2,x_3)I_h$ for every $1\leq i\leq s$.

\subsection{Multiplicity three}
\label{subsec:multiplicity-three}
Let
\[
 H=\langle 3,3p+1,3q+2\rangle,\qquad p,q\geq1.
\]
Every numerical semigroup of multiplicity three and embedding dimension
three can be written in this form: after reordering, the two minimal
generators other than $3$ lie in the two nonzero residue classes modulo $3$.
In either of the following membership tests, the coefficient of the generator
not divisible by $3$ must be congruent to $2$ modulo $3$, and its least
possible value is therefore $2$.  It follows that
$3q+2\in\langle3,3p+1\rangle$ if and only if $q\geq2p$, whereas
$3p+1\in\langle3,3q+2\rangle$ if and only if $p\geq2q+1$.  Hence the
integers $3,3p+1,3q+2$ form a minimal system of generators precisely when
\[
 p\leq q\leq2p-1
 \qquad\text{or}\qquad
 q<p\leq2q.
\]
In this case,
\[
 \Ap(H,3)=\{0,3p+1,3q+2\}.
\]

Give $x_1,x_2,x_3$ the weighted degrees $3$, $3p+1$, and $3q+2$, respectively.
The defining ideal is
\[
 \fkp_H=\bigl(x_1^{p+q+1}-x_2x_3,\ x_2^2-x_1^{2p-q}x_3,
                    x_3^2-x_1^{2q-p+1}x_2\bigr).
\]
The degrees of these three relations are
\[
 3(p+q+1),\qquad 6p+2,\qquad 6q+4.
\]
Let $\ell$ be the least of these degrees.

Suppose first that $p\leq q \le 2p-1$.  
Then $\ell = 6p+2$.
Since $\Fr(H)=3q-1 < 6p+3 = \ell + 1$, 
every integer at least $\ell + 1$ belongs to $H$.  Thus the least elements of $H$
at least $\ell + 1$ in the three residue classes modulo $3$ are
\[
 6p+3,\qquad 6p+4,\qquad 6p+5,
\]
represented by
\[
 x_1^{2p+1},\qquad x_1^{p+1}x_2,\qquad x_1^{2p-q+1}x_3,
\]
respectively.  Hence the relation of degree $6p+2$ remains binomial, while
the other two relations contribute only monomial generators to $I_{\ell+1}$, and we
obtain
\[
 I_{\ell+1}=\bigl(x_2^2-x_1^{2p-q}x_3,\ x_1^{2p+1},\ x_1^{p+1}x_2,\
             x_1^{2p-q+1}x_3,\ x_2x_3,\ x_3^2\bigr).
\]

Suppose that $q<p \le 2q$. Then the least relation degree is $\ell = 6q+4$.
In this case $\Fr(H)=3p-2<6q+5 = \ell + 1$, and the same calculation gives
\[
 I_{\ell + 1} =\bigl(x_3^2-x_1^{2q-p+1}x_2,\ x_1^{2q+2},\ x_1^{q+1}x_3,\
             x_1^{2q-p+2}x_2,\ x_2x_3,\ x_2^2\bigr).
\]
In either case, the three monomials obtained from the residue classes have
degrees $\ell+1,\ell+2,\ell+3$ and minimally generate
$t^{\ell+1}k[t]\cap R$.  The three generators of $\fkp_H$ given above have
distinct degrees.  In the first case, the relation of degree $\ell$
does not belong to $(x_1,x_2,x_3)I_{\ell+1}$.  If $q=p$, the other two
relations have degrees $\ell+1$ and $\ell+2$, so neither belongs to this
ideal.  If $q>p$, then, modulo $(x_1,x_2,x_3)I_{\ell+1}$, they are congruent
to $-x_2x_3$ and $x_3^2$, respectively.  Setting $x_1=0$, and then setting
$x_1=x_2=0$, shows that these monomials do not belong to
$(x_1,x_2,x_3)I_{\ell+1}$.  In the second case, the relation of degree
$\ell$ again does not belong to this ideal.  The other two relations are
congruent to $-x_2x_3$ and $x_2^2$, respectively, except that when $p=q+1$
the first of them has degree $\ell+2$.  The same substitutions, with $x_2$
and $x_3$ interchanged for the second monomial, give the required
noncontainments.  Hence
\[
 \fkp_H\cap(x_1,x_2,x_3)I_{\ell+1}=(x_1,x_2,x_3)\fkp_H,
\]
and Lemma~\ref{lem:number-of-generators} gives
\[
 \mu_S(I_{\ell+1})=3+3=6.
\]

To identify these expressions with \eqref{eq:family-one}, we make
the following final choices of variables: $(x,y,z)=(x_1,x_2,x_3)$ in the
first case and $(x,y,z)=(x_1,x_3,x_2)$ in the second case.  With these
choices, both cases are instances of the family
\[
 I=\bigl(y^2-x^{b-d}z,\ x^{2b+2-e},\ x^{b+1}y,\
             x^{b-d+1}z,\ yz,\ z^2\bigr),
 \qquad b>d\geq0,\quad e\in\{0,1\},
\]
where $(b,d,e)=(p,q-p,1)$ for the first case and $(b,d,e)=(q,p-q-1,0)$ for the second case.

Conversely, let $b>d\geq0$ and $e\in\{0,1\}$.  If $e=1$, take
$(p,q)=(b,b+d)$, and if $e=0$, take $(p,q)=(b+d+1,b)$.  In either case,
the required inequalities for $p$ and $q$ hold, and the preceding calculation yields the
ideal in \eqref{eq:family-one}.

The grading inherited from $H$ is given by
$\deg x = 3$, $\deg y = 3b+2-e$, $\deg z = 3b+3d+4-2e$, and hence
$\ell=6b+4-2e$.
These are the weights arising from the numerical-semigroup presentation; the same ideal may admit other positive gradings.

\subsection{Symmetric multiplicity four}
\label{subsec:symmetric-multiplicity-four}
By Corollary~\ref{cor:emb4symm}, $H$ has the form
\[
 H=\langle4,2p,p+2q\rangle,
 \qquad p\geq3\text{ odd},\quad q\geq1.
\]
Its Ap\'ery set with respect to $4$ is
\[
 \Ap(H,4)=\{0,2p,p+2q,3p+2q\}.
\]
Give $x_1,x_2,x_3$ the degrees $4$, $2p$, and $p+2q$, respectively.  The
defining ideal is
\[
 \fkp_H=(x_2^2-x_1^p,\ x_3^2-x_1^qx_2).
\]
The relation degrees are $4p$ and $2p+4q$, so we put $\ell = \min\{4p, 2p+4q\}$.

Assume first that $2q<p$.  Then $\ell=2p+4q$.  Let $w$ be the
integer determined by
\[
 4(w-1)<p+2q+1\leq4w.
\]
The Ap\'ery set of $H$ shows that the least elements at least $\ell + 1$ in
the four residue classes modulo $4$ are represented by
\[
\begin{array}{c|c}
 \text{least element at least }\ell + 1&\text{corresponding monomial}\\ \hline
 2p+4q+2&x_1^{q+\frac{p+1}{2}}\\
 2p+4q+4&x_1^{q+1}x_2\\
 p+2q+4w&x_1^wx_3\\
 3p+2q&x_2x_3.
\end{array}
\]
The last entry is at least $\ell+1=2p+4q+1$ because $2q<p$.
Moreover, since $p$ is odd and $2q<p$, we have
$q+\frac{p+1}{2}\leq p$.
Since
$x_1^p$ is a multiple of $x_1^{q+\frac{p+1}{2}}$, the first relation
of $\fkp_H$ can be replaced by the monomial
$x_2^2$.  We therefore obtain
\[
 I_{\ell+1} =\bigl(x_3^2-x_1^qx_2,\ x_1^{q+\frac{p+1}{2}},\ x_1^wx_3,\ x_1^{q+1}x_2,\ 
             x_2x_3,\ x_2^2\bigr).
\]
To compare this ideal with \eqref{eq:family-one}, we need to rewrite the
exponents in the above generators of $I_{\ell+1}$.
Since $p+2q+1$ is even, exactly one of
$p+2q+1$ and $p+2q+3$ is divisible by $4$.  
Let $e \in \{0,1\}$ such that
\[
 p+2q+1+2e\equiv0\pmod4,
\]
and put
\[
 w=\frac{p+2q+1+2e}{4},\qquad
 b=w-1,\qquad d=w-q-1.
\]
Then $2w-e=q+\frac{p+1}{2}$ and $b>d\geq0$.
Finally, relabelling $(x_1,x_2,x_3)$ as $(x,z,y)$, the ideal becomes the
one in \eqref{eq:family-one}.
Under this relabelling, the grading inherited from $H$ is
$\deg x=4$, $\deg y=p+2q$, and $\deg z=2p$, and the least relation degree is
$\ell=2p+4q$.

Now suppose that $p<2q$.  In this case $\ell =4p$.
If $2q\geq3p+1$, then
$p+2q\geq4p+1=\ell+1$. 
Hence $x_3\in I_{\ell+1}$.  
Put $J=I_{\ell+1}\cap k[x_1,x_2]$.  Then
$I_{\ell+1}=(x_3)+JS$, and $J$ is an integrally closed
$(x_1,x_2)$-primary ideal because it is the contraction of the integrally
closed ideal $I_{\ell+1}$.
In this case, by \cite[Theorem 2.1]{EndoGotoHongUlrich2026}, the Rees algebra of $I_{\ell+1}S_{(x_1,x_2,x_3)}$ is a Cohen--Macaulay normal domain.  Since $I_{\ell+1}S_\fkp=S_\fkp$ for every prime ideal $\fkp\neq(x_1,x_2,x_3)$, the same holds for the Rees algebra of $I_{\ell+1}$.
For this reason, we exclude this case from the family considered in Theorem A. 
We therefore consider the remaining case $p<2q\leq3p$.
Let $r$ be determined by
\[
 4(r-1)+p+2q<4p+1\leq4r+p+2q.
\]
The least elements of $H$ at least $\ell+1$ in the four residue classes modulo $4$ are as follows:
\[
\begin{array}{c|c}
 \text{least element at least }\ell + 1&\text{corresponding monomial}\\ \hline
 4p+4&x_1^{p+1}\\
 4p+2&x_1^{\frac{p+1}{2}}x_2\\
 p+2q+4r&x_1^rx_3\\
 3p+2q&x_2x_3.
\end{array}
\]
Since $2q>p$, the monomial $x_1^qx_2$ is a multiple of
$x_1^{\frac{p+1}{2}}x_2$, so the second relation in
\(\fkp_H\) is replaced by $x_3^2$.  Writing
$p=2b+1$, the inequalities $p < 2q \le 3p$ and
the defining inequality for $r$ give $b\geq1$ and $1\leq r\leq b+1$.
Consequently,
\[
 I_{\ell+1} =\bigl(x_2^2-x_1^{2b+1},\ x_1^{2b+2},\ x_1^{b+1}x_2,\
             x_1^rx_3,\ x_2x_3,\ x_3^2\bigr),
 \qquad b\geq1,\quad1\leq r\leq b+1.
\]

We next verify that the two generating sets obtained above are
minimal.  In each case, the four monomials in the corresponding table
minimally generate $t^{\ell+1}k[t]\cap R$.  Indeed, the degrees of the first
three differ by less than $4$.  A positive difference between the degree of
$x_2x_3$ and either of the first two degrees is an odd integer smaller than
$p+2q$, while its difference from the remaining degree is smaller than $2p$
and congruent to $2$ modulo $4$.  None of these differences belongs to $H$.

The two generators of $\fkp_H$ have distinct degrees.  If $2q<p$, the
relation of degree $\ell$ does not belong to
$(x_1,x_2,x_3)I_{\ell+1}$.  For the other relation, if
$p=2q+1$, its degree is $\ell+2$; otherwise, modulo
$(x_1,x_2,x_3)I_{\ell+1}$, it is congruent to $x_2^2$, which does not belong
to that ideal, as is seen by setting $x_1=x_3=0$.  If $p<2q\leq3p$, the
first relation has degree $\ell$ and the same argument applies to the second:
if $2q=p+1$, its degree is $\ell+2$; otherwise it is congruent modulo
$(x_1,x_2,x_3)I_{\ell+1}$ to $x_3^2$, which does not belong to that ideal,
as is seen by setting $x_1=x_2=0$.
Thus, in both cases,
\[
 \fkp_H\cap(x_1,x_2,x_3)I_{\ell+1}=(x_1,x_2,x_3)\fkp_H.
\]
Lemma~\ref{lem:number-of-generators} now gives
\[
 \mu_S(I_{\ell+1})=2+4=6.
\]
Finally, relabelling $(x_1,x_2,x_3)$ as $(x,y,z)$ gives
an ideal of the form \eqref{eq:family-two}.
The grading inherited from $H$ is given by
$\deg x=4$, $\deg y=2p=4b+2$, and
$\deg z=p+2q$, and the least relation degree is $\ell=4p=8b+4$.
The defining inequality for $r$ shows that $\deg z$ is one of
$8b+5-4r$ and $8b+7-4r$.  The ideal itself is homogeneous for any positive
choice of $\deg z$.

Conversely, let $b\geq1$ and $1\leq r\leq b+1$.  Set
$p=2b+1$ and $q=3b+3-2r$.  Then $p$ is odd, $q\geq1$, and
$p<2q\leq3p$.  Moreover,
\[
 4(r-1)+p+2q<4p+1\leq4r+p+2q,
\]
so the preceding calculation yields the ideal corresponding to the given
pair $(b,r)$.
In Subsection~\ref{sec:family-two}, we use the grading obtained from this
realization; thus $\deg x=4$, $\deg y=4b+2$, and
$\deg z=8b+7-4r$.

For convenience, the parametrizations obtained in the multiplicity-three
and symmetric multiplicity-four cases are summarized below.  
\[
\begin{array}{c|c|c}
 \text{Semigroup}&\text{Range}&\text{Result}\\ \hline\hline
 \langle3,3p+1,3q+2\rangle
   &p\leq q\leq2p-1
   &\eqref{eq:family-one},\quad(b,d,e)=(p,q-p,1)\ \\ \hline
 \langle3,3p+1,3q+2\rangle
   &q<p\leq2q
   &\eqref{eq:family-one},\quad(b,d,e)=(q,p-q-1,0)\ \\ \hline
 \langle4,2p,p+2q\rangle
   &2q<p
   &\eqref{eq:family-one},\quad(b,d,e)=(w-1,w-q-1,e)\ \\ \hline
 \langle4,2p,p+2q\rangle
   &p<2q\leq3p
   &\eqref{eq:family-two},\quad (b,r)=\bigl((p-1)/2,\ \lceil(3p+1-2q)/4\rceil\bigr) \\ \hline
 \langle4,2p,p+2q\rangle
   &2q\geq3p+1
   &I_{\ell+1}=(x_3)+JS
\end{array}
\]

\subsection{Non-symmetric multiplicity four}
\label{subsec:nonsymmetric-multiplicity-four}

Let $H$ be non-symmetric of multiplicity four.  By
Corollary~\ref{cor:emb4symm}, after ordering the generators we may write
\[
 H=\langle4,u,v\rangle,
 \qquad 4<u<v,
\]
where $u$ and $v$ are odd.  The minimality of the generators implies that
$u\not\equiv v\pmod4$ and $v<3u$.  Hence $u+v\equiv0\pmod4$.

Give $x_1,x_2,x_3$ the weighted degrees $4,u,v$, respectively.  By
Theorem~\ref{thm:herzog},
\[
 \fkp_H=\left(
 x_1^{\frac{u+v}{4}}-x_2x_3,
 x_2^3-x_1^{\frac{3u-v}{4}}x_3,
 x_3^2-x_1^{\frac{v-u}{2}}x_2^2
\right).
\]
Moreover,
\[
 \Ap(H,4)=\{0,u,v,2u\}.
\]
The degrees of the three relations are $u+v$, $3u$, and $2v$.  If
$v<2u$, then $u+v$ is the least of these degrees.  If $2u<v<3u$, then
$3u$ is the least.  We therefore treat these two cases separately.

Suppose first that $v<2u$, so that $\ell=u+v$.  Put
\[
 b=\frac{u+v}{4},
\]
and let $e=0$ if $(u,v)\equiv(1,3)\pmod4$ and $e=1$ if
$(u,v)\equiv(3,1)\pmod4$.  Put
\[
 c=\frac{v+2e+1}{4}.
\]
Then
\[
 u=4(b-c)+2e+1,
 \qquad v=4c-2e-1,
\]
and the inequalities $4<u<v<2u$ are equivalent to
\[
 c\geq e+2, \qquad \text{ and }
 \qquad 3c-2e\leq2b\leq4c-2e-2.
\]
Since $\ell+1=4b+1$, the least elements of $H$ that are at least
$\ell+1$ in the four residue classes modulo $4$ are represented as follows:
\[
\begin{array}{c|c|c}
 \text{Ap\'ery element}&\text{least element at least }\ell+1
   &\text{corresponding monomial}\\ \hline
 0&4b+4&x_1^{b+1}\\
 u&4b+2e+1&x_1^cx_2\\
 v&4b-2e+3&x_1^{b-c+1}x_3\\
 2u&4b+2&x_1^{2c-b-e}x_2^2.
\end{array}
\]
The first relation has degree $\ell$ and remains binomial.  Furthermore,
\[
 \frac{3u-v}{4}-(b-c+1)=2b-3c+2e\geq0
\]
and
\[
 \frac{v-u}{2}-(2c-b-e)=2c-b-e-1\geq0.
\]
Thus $x_1^{\frac{3u-v}{4}}x_3$, which appears as the latter monomial term in the second relation, is a multiple
of $x_1^{b-c+1}x_3$, and $x_1^{\frac{v-u}{2}}x_2^2$ is a multiple of $x_1^{2c-b-e}x_2^2$.  Consequently,
\[
 I_{\ell+1}=\bigl(
 x_1^b-x_2x_3,
 x_1^{b+1},
 x_1^cx_2,
 x_1^{2c-b-e}x_2^2,
 x_1^{b-c+1}x_3,
 x_2^3,
 x_3^2
 \bigr).
\]

Suppose next that $2u<v<3u$, so that $\ell=3u$.  Let $e=0$ if
$u\equiv1\pmod4$ and $e=1$ if $u\equiv3\pmod4$, and put
\[
 c=\frac{u-2e-1}{4},
 \qquad b=\frac{3u-v}{4}.
\]
Then
\[
 u=4c+2e+1,
 \qquad v=3u-4b,
\]
and the inequalities $2u<v<3u$ are equivalent to
\[
 c\geq1,
 \qquad 1\leq b\leq c.
\]
Since $\ell+1=3u+1$, the least elements in the four residue classes are
represented as follows:
\[
\begin{array}{c|c|c}
 \text{Ap\'ery element}&\text{least element at least }\ell+1
   &\text{corresponding monomial}\\ \hline
 0&\ell+1+2e&x_1^{3c+2e+1}\\
 u&\ell+2&x_1^{2c+e+1}x_2\\
 2u&\ell+3-2e&x_1^{c+1}x_2^2\\
 v&\ell+4&x_1^{b+1}x_3.
\end{array}
\]
The second relation has degree $\ell$ and remains binomial.  The equalities
\[
 \frac{u+v}{4}-(3c+2e+1)=c-b\geq0
\]
and
\[
 \frac{v-u}{2}-(c+1)=3c+2e-2b\geq0
\]
show that the other two relations may be replaced by $x_2x_3$ and $x_3^2$.
We therefore obtain
\[
 I_{\ell+1}=\bigl(
 x_2^3-x_1^bx_3,
 x_1^{3c+2e+1},
 x_1^{2c+e+1}x_2,
 x_1^{c+1}x_2^2,
 x_1^{b+1}x_3,
 x_2x_3,
 x_3^2
 \bigr).
\]

In both cases, the four monomials obtained from the four residue classes have
weighted degrees $\ell+1,\ell+2,\ell+3,\ell+4$, in some order.  Their images
therefore form a minimal system of generators of
$t^{\ell+1}k[t]\cap R$, since the least positive element of $H$ is $4$.
The generating sets of $I_{\ell+1}$ obtained above also satisfy
\[
 \fkp_H\cap(x_1,x_2,x_3)I_{\ell+1}
 =(x_1,x_2,x_3)\fkp_H.
\]
Indeed, the relation of degree $\ell$ cannot belong to
$(x_1,x_2,x_3)I_{\ell+1}$, whose nonzero homogeneous elements have degree at
least $\ell+4$.  In the case $v<2u$, if
either $2b-3c+2e$ or $2c-b-e-1$ is positive, the corresponding relation is
congruent modulo $(x_1,x_2,x_3)I_{\ell+1}$ to $x_2^3$ or $x_3^2$,
respectively.  If the difference is zero, that relation has degree at most
$\ell+3$.  The substitutions
$x_1=x_3=0$ and $x_1=x_2=0$ show that neither $x_2^3$ nor $x_3^2$ belongs to
$(x_1,x_2,x_3)I_{\ell+1}$.  In the case $2u<v<3u$, the third relation is
congruent to $x_3^2$.  If $c>b$, the first relation is congruent to
$-x_2x_3$; if $c=b$, it has degree at most $\ell+3$.  After setting $x_1=0$,
the monomial $x_2x_3$ does not belong to
$(x_2,x_3)(x_2^3,x_2x_3,x_3^2)$, and the assertion for $x_3^2$ follows by
setting $x_1=x_2=0$.  Since the three relation degrees are distinct,
this proves the claim.
Lemma~\ref{lem:number-of-generators} now gives
\[
 \mu_S(I_{\ell+1})=3+4=7.
\]
This proves Theorem~\ref{thm:classification}(3), and the converse constructions in the preceding two
subsections prove Theorem~\ref{thm:classification}(4).  The proof of Theorem~\ref{thm:classification} is complete.

\section{Proof of Theorem A}
\label{sec:main-theorem}

\subsection{The first six-generated family}
\label{sec:family-one}

In this subsection, we treat the first family \eqref{eq:family-one} in the Introduction. Put
$$
\begin{aligned}
 I={}&( g_1 = y^2-x^{b-d}z,\ g_2 = x^{2b+2-e},\ g_3 = x^{b+1}y,\
        g_4 = x^{b-d+1}z,\ g_5 = yz,\ g_6 = z^2),\\
 &\hspace{35mm} b>d\geq0,\qquad e\in\{0,1\}.
\end{aligned}
$$

\subsubsection{Cohen--Macaulayness}

\begin{proposition}
\label{prop:family-one-local-reductions}
Define $Q\subseteq I$ by
\[
\begin{array}{c|c|l}
 e&d&Q\\ \hline
 1&0,1&(g_1,g_5,g_2+g_6)\\
 1&d\geq2&(g_1,g_2+g_5,g_4+g_6)\\
 0&0&(g_1+g_5,g_3+g_5,g_2+g_4+g_5+g_6)\\
 0&d\geq1&(g_1,g_3+g_5,g_2+g_4+g_6).
\end{array}
\]
Then
\[
 I^2=QI+\m I^2.
\]
Consequently, $\calR(I)$ is Cohen--Macaulay by Proposition~\ref{prop:local-reduction-criterion}.
\end{proposition}

\begin{proof}
	We consider separately the four cases in the definition of $Q$.
	
	\medskip
	
	\noindent
	\underline{$e=1$, $d =0, 1$}: Since $Q=(g_1, g_5, g_2+g_6)$, we have $I=Q+ (g_3, g_4, g_6)$.
	Hence $I^2=QI + (g_ig_j \mid i,j \in \{3,4,6\})$.
	Since $d\in\{0,1\}$, all the powers of $x$ occurring below are non-negative.
	\begin{itemize}
		\item $g_4g_6=-xg_1g_6+xg_5^2\in QI+\m I^2$.
		\item $g_3^2=xg_1g_2+g_4(g_2+g_6)-g_4g_6\in QI+\m I^2$.
		\item $
		 g_3g_4=x^{1-d}g_5(g_2+g_6)-x^{1-d}g_5g_6\in QI.
		$
		\item $
		 g_3g_6=x^dg_4g_5\in QI.
		$
		\item $
		 g_4^2=-xg_1g_4+x^{1-d}g_3g_5\in QI.
		$
		\item $
		 g_2g_6=-x^{2d}g_1g_4+x^dg_3g_5\in QI,
		$
		we obtain
		$
		 g_6^2=(g_2+g_6)g_6-g_2g_6\in QI.
		$
	\end{itemize}
	Thus all the six products belong to $QI+\m I^2$, and hence
	$I^2=QI+\m I^2$.

	\medskip
	
	\noindent
	\underline{$e=1$, $d\geq2$}: Since $Q=(g_1,g_2+g_5,g_4+g_6)$, we have $I=Q+(g_3,g_5,g_6)$.
	Hence $I^2=QI+(g_ig_j\mid i,j\in\{3,5,6\})$.
	Since $d\geq2$, all the powers of $x$ occurring below are positive.
	\begin{itemize}
		\item $g_3^2=xg_1g_2+(g_4+g_6)g_2-x^{2d-1}g_4^2\in QI+\m I^2$.
		\item $g_3g_5=x^dg_1g_4+x^{d-1}g_4^2\in QI+\m I^2$.
		\item $g_3g_6=x^dg_4g_5\in\m I^2$.
		\item $g_5^2=(g_2+g_5)g_5-x^{d-1}g_3g_4\in QI+\m I^2$.
		\item $g_5g_6=(g_2+g_5)g_6-x^{2d-1}g_4^2\in QI+\m I^2$.
		\item $g_6^2=(g_4+g_6)g_6-x(g_5^2-g_1g_6)\in QI+\m I^2$.
	\end{itemize}
	Thus all the six products belong to $QI+\m I^2$, and hence
	$I^2=QI+\m I^2$.

	\medskip
	
	\noindent
	\underline{$e=0$, $d=0$}: Since
	$Q=(g_1+g_5,g_3+g_5,g_2+g_4+g_5+g_6)$, we have
	$I=Q+(g_2,g_4,g_5)$.
	Hence $I^2=QI+(g_ig_j\mid i,j\in\{2,4,5\})$.
	We first note that
	$g_2g_6=-xg_1g_4+xg_3g_5\in\m I^2$ and
	$g_4g_6=-xg_1g_6+xg_5^2\in\m I^2$.
	\begin{itemize}
		\item $g_2g_4=x(g_3^2-g_1g_2)\in\m I^2$.
		\item $g_4^2=g_2g_6\in\m I^2$.
		\item $g_4g_5=(g_2+g_4+g_5+g_6)g_4-g_2g_4-g_4^2-g_4g_6\in QI+\m I^2$.
		\item Since $g_3g_4=g_2g_5$, we have
		$g_2g_5=(g_3+g_5)g_4-g_4g_5\in QI+\m I^2$.
		\item Since $g_3g_6=g_4g_5$, we have
		$g_5^2=(g_2+g_4+g_5+g_6)g_5-g_2g_5-(g_3+g_5)g_6\in QI+\m I^2$.
		\item $g_2^2=(g_2+g_4+g_5+g_6)g_2-g_2g_4-g_2g_5-g_2g_6\in QI+\m I^2$.
	\end{itemize}
	Thus all the six products belong to $QI+\m I^2$, and hence
	$I^2=QI+\m I^2$.

	\medskip
	
	\noindent
	\underline{$e=0$, $d\geq1$}: Since $Q=(g_1,g_3+g_5,g_2+g_4+g_6)$, we have
	$I=Q+(g_2,g_4,g_5)$.
	Hence $I^2=QI+(g_ig_j\mid i,j\in\{2,4,5\})$.
	We first note that
	$g_2g_6=-x^{2d+1}g_1g_4+x^{d+1}g_3g_5\in\m I^2$ and
	$g_4g_6=-xg_1g_6+xg_5^2\in\m I^2$.
	\begin{itemize}
		\item $g_2g_4=x(g_3^2-g_1g_2)\in\m I^2$.
		\item $g_2g_5=x^dg_3g_4\in\m I^2$.
		\item $g_4^2=(g_2+g_4+g_6)g_4-g_2g_4-g_4g_6\in QI+\m I^2$.
		\item Since $g_3g_6=x^dg_4g_5\in\m I^2$, we have
		$g_5g_6=(g_3+g_5)g_6-g_3g_6\in QI+\m I^2$, and hence
		$g_4g_5=(g_2+g_4+g_6)g_5-g_2g_5-g_5g_6\in QI+\m I^2$.
		\item $g_5^2=(g_3+g_5)g_5-x^dg_1g_4-x^{d-1}g_4^2\in QI+\m I^2$.
		\item $g_2^2=(g_2+g_4+g_6)g_2-g_2g_4-g_2g_6\in QI+\m I^2$.
	\end{itemize}
	Thus all the six products belong to $QI+\m I^2$, and hence
	$I^2=QI+\m I^2$.
\end{proof}

\begin{remark}
Proposition~\ref{prop:family-one-local-reductions} shows that $QS_\m$ is a
reduction of $IS_\m$; it does not assert that $Q$ itself is a reduction of
$I$ in $S$.  Since no global reduction is needed for the proof of
Theorem~\ref{thm:main}, we do not pursue one here.
\end{remark}

\subsubsection{Normality}

We next prove that $I$ is normal.
Recall from Subsection~\ref{subsec:multiplicity-three} that
$\deg x=3$, $\deg y=3b+2-e$, and $\deg z=3b+3d+4-2e$; in particular,
the least relation degree is $\ell=6b+4-2e$.

Let $f=y^2$ and $J=I+(f)$. Since $\varphi_H(f) = t^{6b+4-2e} = t^\ell$, 
$$
J=(y^2, x^{b-d}z, x^{2b+2-e}, 	x^{b+1}y, yz, z^2)
$$
is a monomial ideal with six minimal generators.
As observed in Section~\ref{sec:target-ideals}, $J=I_\ell$ and hence $J$ is integrally closed.  Since $J$ is a monomial ideal with six minimal generators, it is normal by \cite{AtakaMatsuoka2026}.

\begin{lemma}\label{lem:common_firstfamily}
The following assertions hold.
\begin{enumerate}
	\item $\m f \subseteq I$.
	\item $g_6 f \in I^2$.
\end{enumerate}
	
\end{lemma}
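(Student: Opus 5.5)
Both assertions come down to explicit membership checks, using $f=y^2=g_1+x^{b-d}z$.

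For (1), I will show that $xf$, $yf$ and $zf$ each lie in $I$.
\begin{itemize}
\item Since $f=g_1+x^{b-d}z$, we get $xf=xg_1+x^{b-d+1}z=xg_1+g_4\in I$.
\item For $zf=y^2z=y\cdot g_5$, the membership in $I$ is immediate.
\item For $yf=y^3$, write $y^3=yg_1+x^{b-d}yz=yg_1+x^{b-d}g_5\in I$.
\end{itemize}
Hence $\m f\subseteq I$.

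For (2), I will write $g_6f=y^2z^2=(yz)^2=g_5^2\in I^2$. This is immediate once the generators are written out.

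Neither part needs the grading or the parameter constraints beyond $b-d\geq1$, which keeps the exponent $b-d$ nonnegative. The only point to watch is that $f$ is the monomial $y^2$ rather than the binomial $g_1$. So there is no real obstacle; the lemma just records these identities for use in the coefficient-separation argument of Subsection~\ref{subsec:normality}. In that argument, (2) lets us discard every $\lambda$ with $\lambda_6>0$.
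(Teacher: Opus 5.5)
Your proposal is correct and matches the paper's proof: the same identities $xf=xg_1+g_4$, $yf=yg_1+x^{b-d}g_5$ and $zf=y\,g_5$ give (1), and $g_6f=(yz)^2=g_5^2$ gives (2).
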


\begin{proof}
	(1) $y^2z = y(yz) \in I$ is obvious.
	Furthermore, $xy^2 = xg_1 + g_4$ and $y^3 = yg_1 + x^{b-d}(yz)$ imply $xy^2, y^3 \in I$.
	
	(2) It is easy that $g_6 f = y^2z^2 = (yz)^2 \in I^2$.
\end{proof}

We now apply the strategy described in Subsection~\ref{subsec:normality}.

Suppose that $I^s$ is not integrally closed for some positive integer $s$.
Since $\ol{I^s}$ is homogeneous, take a homogeneous element
$\xi\in\ol{I^s}\setminus I^s$ of degree $\delta$.  Since
$\xi\in\ol{I^s}\subseteq\ol{J^s}=J^s$, we may choose the least positive
integer $m$ such that
$$
\xi \in I^s + fI^{s-1} + \cdots + f^m I^{s-m}
$$
and write
$$
\xi = \xi_0 + f^m \omega
$$
with $\xi_0\in I^s+fI^{s-1}+\cdots+f^{m-1}I^{s-m+1}$ and
$\omega\in I^{s-m}$.  We may assume that $\omega$ is homogeneous of degree
$\delta-m\ell$.  By Lemma~\ref{lem:common_firstfamily}(2) and the reductions
in Subsection~\ref{subsec:normality}, we may put
$$
\Lambda_0=\{\lambda\in\Lambda^{s-m}_{\delta-m\ell}\mid\lambda_6=0\}
$$
and write
$$
\omega = \sum_{\lambda \in \Lambda_0} c_\lambda g^\lambda
$$
with $c_\lambda\in k$.

\begin{lemma}\label{lem:firstfamily_cases}
	The following assertions hold.
	\begin{enumerate}
		\item If $e=0$, then $fg_2 \in I^2$.
		\item Suppose $d=0$. Then $fg_4 \in I^2$. Furthermore, if $e=1$, then $fg_3 \in I^2$.
		\item Suppose $d=1$. Then $g_3g_5 \equiv g_4^2 \pmod{\m I^2}$. Furthermore, if $e=1$, then $g_3^2 \equiv g_2g_4 \pmod{\m I^2}$, $g_2g_5 = g_3g_4$, and
		$g_3^3\equiv g_2^2g_5\pmod{\m I^3}$.
		\item Suppose $d\geq2$. Then $g_3g_5\in\m I^2$.  If $e=1$, then $g_2g_5\in\m I^2$ and $g_3^2\equiv g_2g_4\pmod{\m I^2}$.
	\end{enumerate}
\end{lemma}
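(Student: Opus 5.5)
The plan is to verify each assertion by writing explicit polynomial identities among $f=y^2$ and the generators $g_1=y^2-x^{b-d}z$, $g_2=x^{2b+2-e}$, $g_3=x^{b+1}y$, $g_4=x^{b-d+1}z$, $g_5=yz$, $g_6=z^2$. Each assertion then reduces to a monomial comparison. The basic tool is the substitution $y^2=g_1+x^{b-d}z$, used whenever $f$ or a square of $y$ appears. In (1) and (2), a product $fg_i$ is rewritten as $g_1g_i$ plus a monomial multiple of $x^{b-d}z$. We then exhibit that monomial as a product of two generators.

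\textbf{Parts (1) and (2).} For (1), with $e=0$ we have $fg_2=x^{2b+2}y^2=(x^{b+1}y)^2=g_3^2\in I^2$. For (2), with $d=0$ we have $g_1=y^2-x^bz$ and $g_4=x^{b+1}z$. Then $fg_4=x^{b+1}y^2z=(x^{b+1}y)(yz)=g_3g_5$. If moreover $e=1$, then $g_2=x^{2b+1}$, and
\[
fg_3=x^{b+1}y^3=x^{b+1}y(g_1+x^bz)=g_1g_3+x^{2b+1}yz=g_1g_3+g_2g_5\in I^2.
\]

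\textbf{Part (3), $d=1$.} We have $g_4=x^bz$ and $g_1=y^2-x^{b-1}z$. First, $g_3g_5=x^{b+1}y^2z=x^{b+1}z(g_1+x^{b-1}z)=xg_1g_4+g_4^2$, which gives the first congruence. For $e=1$ we have $g_2=x^{2b+1}$. Then
\[
g_3^2=x^{2b+2}y^2=x^{2b+2}g_1+x^{3b+1}z=xg_1g_2+g_2g_4\equiv g_2g_4 \pmod{\m I^2},
\]
and directly $g_2g_5=x^{2b+1}yz=g_3g_4$. For the cubic congruence, compute
\[
g_3^3=x^{3b+3}y^3=x^{3b+3}y(g_1+x^{b-1}z)=x^{b+2}g_1g_2y\cdot(\ldots)+x^{4b+2}yz .
\]
Concretely, $x^{3b+3}yg_1=x\,g_1g_2g_3\cdot x^{0}$, since $g_2g_3=x^{3b+2}y$. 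Also $x^{4b+2}yz=g_2^2g_5\cdot x^{0}$, since $g_2^2=x^{4b+2}$. Hence $g_3^3=xg_1g_2g_3+g_2^2g_5\equiv g_2^2g_5 \pmod{\m I^3}$.

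\textbf{Part (4), $d\ge2$.} We have $g_3g_5=x^{b+1}y^2z=x^{b+1}zg_1+x^{2b-d+1}z^2$. The first term is $x^dg_1g_4$, which lies in $\m I^2$ because $d\ge2$. The second term is $x^{d-1}g_4^2$, since $g_4^2=x^{2b-2d+2}z^2$, and it lies in $\m I^2$ because $d\geq2$. For $e=1$, we get $g_2g_5=x^{2b+1}yz=x^{d}\,g_3g_4$, because $g_3g_4=x^{2b-d+2}yz$; this lies in $\m I^2$. Also
\[
g_3^2=xg_1g_2+x^{3b-d+2}z ,
\]
and $g_2g_4=x^{3b-d+2}z$, so $g_3^2\equiv g_2g_4 \pmod{\m I^2}$.

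The cubic congruence in (3) is the only step needing care, namely checking that the exponents of $x$ match exactly. The exponent bookkeeping in (4) also has to confirm that every leftover factor $x^{d}$ or $x^{d-1}$ has positive exponent. Beyond that the argument is routine.
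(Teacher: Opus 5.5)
Your argument is correct and is essentially the paper's proof: each assertion follows from the same explicit identities, obtained by substituting $y^2=g_1+x^{b-d}z$, for example $g_3g_5=x^dg_1g_4+x^{d-1}g_4^2$ and $g_3^3=xg_1g_2g_3+g_2^2g_5$. There is one arithmetic slip in (4): since $g_3g_4=x^{2b-d+2}yz$, the correct identity is $g_2g_5=x^{d-1}g_3g_4$, not $x^{d}g_3g_4$ (it must reduce to the equality $g_2g_5=g_3g_4$ of (3) when $d=1$), and the conclusion $g_2g_5\in\m I^2$ still holds precisely because $d\geq2$; also delete the stray placeholder ``$x^{b+2}g_1g_2y\cdot(\ldots)$'' from the cubic computation in (3).
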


\begin{proof}
The assertions follow from the following identities:
\begin{align*}
 fg_2&=g_3^2 &&(e=0),\\
 fg_4&=g_3g_5 &&(d=0),\\
 fg_3&=g_1g_3+g_2g_5 &&(d=0,\ e=1),\\
 g_4^2-g_3g_5&=-xg_1g_4 &&(d=1),\\
 g_3^2-g_2g_4&=xg_1g_2 &&(d=1,\ e=1),\\
 g_2g_5&=g_3g_4 &&(d=1,\ e=1),\\
 g_3^3-g_2^2g_5&=xg_1g_2g_3 &&(d=1,\ e=1),\\
 g_3g_5&=x^dg_1g_4+x^{d-1}g_4^2 &&(d\geq2),\\
 g_2g_5&=x^{d-1}g_3g_4 &&(d\geq2,\ e=1),\\
 g_3^2-g_2g_4&=xg_1g_2 &&(d\geq2,\ e=1).
\end{align*}
\end{proof}

Using Lemma~\ref{lem:firstfamily_cases}, we can impose the
following additional conditions on the elements of $\Lambda_0$.  In each
case, let $\Lambda$ denote the resulting set.  We may then write
$$\omega=\sum_{\lambda\in\Lambda}c_\lambda g^\lambda.$$

For $d=0$, the products involving $f$ in Lemma~\ref{lem:firstfamily_cases}
give the first two rows below.  Suppose that $d=1$ and $e=1$.  We first use
$g_4^2\equiv g_3g_5$ to arrange $0\leq\lambda_4\leq1$.  If $\lambda_4=1$
and either $\lambda_2>0$ or $\lambda_3>0$, the other two quadratic relations remove
$g_4$.  When $\lambda_4=0$, the cubic relation allows us to arrange
$0\leq\lambda_3\leq2$.  If $d=1$ and $e=0$, the equality $fg_2=g_3^2$
removes $g_2$, and $g_4^2\equiv g_3g_5$ again gives
$0\leq\lambda_4\leq1$.  Finally, suppose that $d\geq2$.  If $e=1$, use
$g_3^2\equiv g_2g_4$ to arrange $0\leq\lambda_3\leq1$; when $\lambda_5>0$,
the two products belonging to $\m I^2$ remove $g_2$ and $g_3$.  If $e=0$,
the equality $fg_2=g_3^2$ removes $g_2$, and $g_3g_5\in\m I^2$ removes
$g_3$ whenever $\lambda_5>0$.

\[
\begin{array}{c|l}
 \text{Case}&\text{Additional Conditions} \\ \hline \hline
 d=0,\, e=1 & \lambda_3 = \lambda_4 = 0\\
 \hline
 d=0,\, e=0 & \lambda_2 = \lambda_4 = 0\\
 \hline
 d=1,\, e=1 & \begin{gathered}0\leq\lambda_3\leq2,\quad 0\leq\lambda_4\leq1,\\
                    \text{if }\lambda_4=1,\text{ then }\lambda_2=\lambda_3=0
                    \end{gathered}\\
 \hline
 d=1,\, e=0 & \lambda_2 = 0, \, 0 \le \lambda_4 \le 1\\
 \hline
 d\ge 2, \, e=1 & \begin{gathered}0\leq\lambda_3\leq1,\\
                    \text{if }\lambda_5>0,\text{ then }\lambda_2=\lambda_3=0
                    \end{gathered}\\
 \hline
 d\ge 2, \, e=0 & \begin{gathered}\lambda_2=0,\\
                    \text{if }\lambda_5>0,\text{ then }\lambda_3=0
                    \end{gathered}
\end{array}
\]

We now prove, in each of the six cases, that
$c_\lambda=0$ for every $\lambda\in\Lambda$.  For a homomorphism
$\psi_\varepsilon\colon S\to k[[t]]$, we write
$\ord_\varepsilon(g)=\ord_t\psi_\varepsilon(g)$ for $g\in S$.

\medskip

\noindent
\underline{$d=0$, $e=1$}: 
For each $\varepsilon \in k^{\times}$, let $\psi_\varepsilon : S \to k[[t]]$ be a ring homomorphism defined by
$$
x \mapsto t^3,\quad y \mapsto \varepsilon t^{3b+1}, \quad z \mapsto \varepsilon^2 t^{3b+2} - t^{3b+3}.
$$
We note the following:
\[
\begin{array}{c|c|c}
 \text{Element}&\psi_\varepsilon&\ord_\varepsilon\\ \hline
 f=y^2&\varepsilon^2t^{6b+2}&6b+2\\
 g_1=y^2-x^bz&t^{6b+3}&6b+3\\
 g_2=x^{2b+1}&t^{6b+3}&6b+3\\
 g_5=yz&\varepsilon^3t^{6b+3}-\varepsilon t^{6b+4}&6b+3
\end{array}
\]

Put $\nu=6b+3$. The omitted generators $g_3,g_4,g_6$ have
orders $\nu+1,\nu+2,\nu+1$, respectively. Hence
$\psi_\varepsilon(I)k[[t]]\subseteq(t^\nu)$. Set
\[
 G=\{i\mid\ord_t\psi_\varepsilon(g_i)=\nu\}=\{1,2,5\}.
\]
Every $\lambda\in\Lambda$ has support in $G$.
By Lemma~\ref{lem:coefficient-separation}, it suffices to check that
$\lambda\mapsto3\lambda_5$ is injective on $\Lambda$.

For a fixed integer $i$, an element
$\lambda\in\Lambda$ with $\lambda_5=i$ is determined by the following two
equations in $\lambda_1$ and $\lambda_2$:
$$
\begin{cases}
	\lambda_1 + \lambda_2 = s-m-i, \\
	\lambda_1 \deg g_1 + \lambda_2 \deg g_2
	= \delta-m\ell-i\deg g_5.
\end{cases}
$$
Since $\deg g_2-\deg g_1=1$, this system has at most one
solution. Thus the required map is injective and
Lemma~\ref{lem:coefficient-separation} gives $c_\lambda=0$ for every
$\lambda\in\Lambda$.

\medskip

\noindent
\underline{$d=0$, $e=0$}: 
For each $\varepsilon\in k^\times$, define
$\psi_\varepsilon\colon S\to k[[t]]$ by
$$
x\longmapsto t^2,\qquad
y\longmapsto t^{2b+1},\qquad
z\longmapsto t^{2b+2}-\varepsilon t^{2b+3}.
$$
Put $\nu=4b+3$.  The elements that occur in $\omega$ have the following
images:
\[
\begin{array}{c|c|c}
 \text{Element}&\psi_\varepsilon&\ord_\varepsilon\\ \hline
 f=y^2&t^{4b+2}&\nu-1\\
 g_1=y^2-x^bz&\varepsilon t^{4b+3}&\nu\\
 g_3=x^{b+1}y&t^{4b+3}&\nu\\
 g_5=yz&t^{4b+3}-\varepsilon t^{4b+4}&\nu
\end{array}
\]
The omitted generators $g_2,g_4,g_6$ all have order $\nu+1$.
Thus $\psi_\varepsilon(I)k[[t]]\subseteq(t^\nu)$. Set
\[
 G=\{i\mid\ord_t\psi_\varepsilon(g_i)=\nu\}=\{1,3,5\}.
\]
Every $\lambda\in\Lambda$ has support in $G$. Lemma~\ref{lem:coefficient-separation} reduces the claim to
injectivity of $\lambda\mapsto\lambda_1$ on $\Lambda$.

Fix $i$.  An element $\lambda\in\Lambda$ with $\lambda_1=i$ is determined
by
$$
\begin{cases}
 \lambda_3+\lambda_5=s-m-i,\\
 \lambda_3\deg g_3+\lambda_5\deg g_5
 =\delta-m\ell-i\deg g_1.
\end{cases}
$$
Since $\deg g_5-\deg g_3=1$, this system has at most one solution.
Therefore all the coefficients $c_\lambda$ vanish.

\medskip

\noindent
\underline{$d=1$, $e=1$}:
For $\varepsilon\in k^\times$, let $\psi_\varepsilon\colon S\to k[[t]]$
be given by
$$
x\longmapsto t,\qquad
y\longmapsto t^b,\qquad
z\longmapsto t^{b+1}-\varepsilon t^{b+2}.
$$
With $\nu=2b+1$, we have
\[
\begin{array}{c|c|c}
 \text{Element}&\psi_\varepsilon&\ord_\varepsilon\\ \hline
 f=y^2&t^{\nu-1}&\nu-1\\
 g_1&\varepsilon t^\nu&\nu\\
 g_2&t^\nu&\nu\\
 g_3&t^\nu&\nu\\
 g_4&t^\nu-\varepsilon t^{\nu+1}&\nu\\
 g_5&t^\nu-\varepsilon t^{\nu+1}&\nu
\end{array}
\]
The remaining generator $g_6$ has order $\nu+1$, so
$\psi_\varepsilon(I)k[[t]]\subseteq(t^\nu)$. Set
\[
 G=\{i\mid\ord_t\psi_\varepsilon(g_i)=\nu\}=\{1,2,3,4,5\}.
\]
Every $\lambda\in\Lambda$ has support in $G$.
We check the injectivity of $\lambda\mapsto\lambda_1$ on $\Lambda$
to apply Lemma~\ref{lem:coefficient-separation}.

First suppose that $\lambda_4=0$.  After fixing $\lambda_1=i$, the remaining
exponents satisfy
$$
\lambda_2+\lambda_3+\lambda_5=s-m-i
$$
and
$$
\lambda_3+3\lambda_5
=\delta-m\ell-i\deg g_1-(6b+3)(s-m-i).
$$
The right-hand side is fixed.  Since $0\leq\lambda_3\leq2$, its residue
modulo $3$ determines $\lambda_3$, after which $\lambda_5$ and $\lambda_2$
are determined in this order.  Thus there is at most one such element of
$\Lambda$.

If $\lambda_4=1$, then $\lambda_2=\lambda_3=0$, and the number of factors
determines $\lambda_5$.  It remains to see that the two forms cannot occur
with the same value of $i$ and the same weighted degree.  Suppose that
$(i,\lambda_2,\lambda_3,0,\lambda_5,0)$ and
$(i,0,0,1,\mu_5,0)$ did so.  The number of factors gives
$\mu_5=\lambda_2+\lambda_3+\lambda_5-1$.  Comparing their weighted degrees
then yields
$$
3\lambda_2+2\lambda_3=1,
$$
which has no solution in non-negative integers. Hence the required
map is injective, and Lemma~\ref{lem:coefficient-separation} gives
$c_\lambda=0$ for every $\lambda\in\Lambda$.

\medskip

\noindent
\underline{$d=1$, $e=0$}:
Use the same map as in the preceding case, again with $\nu=2b+1$.  For the
elements occurring in $\omega$, we have
\[
\begin{array}{c|c|c}
 \text{Element}&\psi_\varepsilon&\ord_\varepsilon\\ \hline
 f=y^2&t^{\nu-1}&\nu-1\\
 g_1&\varepsilon t^\nu&\nu\\
 g_3&t^\nu&\nu\\
 g_4&t^\nu-\varepsilon t^{\nu+1}&\nu\\
 g_5&t^\nu-\varepsilon t^{\nu+1}&\nu
\end{array}
\]
The omitted generators $g_2,g_6$ have order $\nu+1$. Hence
$\psi_\varepsilon(I)k[[t]]\subseteq(t^\nu)$. Set
\[
 G=\{i\mid\ord_t\psi_\varepsilon(g_i)=\nu\}=\{1,3,4,5\}.
\]
Every $\lambda\in\Lambda$ has support in $G$.
We apply Lemma~\ref{lem:coefficient-separation} by checking
injectivity of $\lambda\mapsto\lambda_1$ on $\Lambda$.

Fix $\lambda_1=i$.  Since $\lambda_2=\lambda_6=0$, the number of factors is
$$
\lambda_3+\lambda_4+\lambda_5=s-m-i.
$$
The weighted-degree equation, after subtracting
$(6b+5)(s-m-i)$ and dividing by $2$, has the form
$$
\lambda_4+2\lambda_5=c
$$
for a fixed integer $c$.  Since $0\leq\lambda_4\leq1$, the parity of $c$
determines $\lambda_4$, and then $\lambda_5$ and $\lambda_3$ are uniquely
determined.  Thus every coefficient $c_\lambda$ is zero.

\medskip

\noindent
\underline{$d\geq2$, $e=1$}:
We first use the homomorphism $\psi_\varepsilon\colon S\to k[[t]]$ defined by
$$
x\longmapsto t,\qquad
y\longmapsto t^b,\qquad
z\longmapsto t^{b+d}-\varepsilon t^{b+d+1}.
$$
Put $\nu_1=2b+1$.  Then
\[
\begin{array}{c|c|c}
 \text{Element}&\psi_\varepsilon&\ord_\varepsilon\\ \hline
 f=y^2&t^{\nu_1-1}&\nu_1-1\\
 g_1&\varepsilon t^{\nu_1}&\nu_1\\
 g_2&t^{\nu_1}&\nu_1\\
 g_3&t^{\nu_1}&\nu_1\\
 g_4&t^{\nu_1}-\varepsilon t^{\nu_1+1}&\nu_1\\
 g_5&t^{\nu_1+d-1}-\varepsilon t^{\nu_1+d}&\nu_1+d-1
\end{array}
\]
The remaining generator $g_6$ has order $2b+2d>\nu_1$.
Thus $\psi_\varepsilon(I)k[[t]]\subseteq(t^{\nu_1})$. Set
\[
 G_1=\{i\mid\ord_t\psi_\varepsilon(g_i)=\nu_1\}=\{1,2,3,4\}.
\]
The elements of $\Lambda$ with support in $G_1$ are precisely those with $\lambda_5=0$. For this subset,
Lemma~\ref{lem:coefficient-separation} requires injectivity of
$\lambda\mapsto\lambda_1$.

After fixing $\lambda_1=i$, the remaining exponents satisfy
$$
\lambda_2+\lambda_3+\lambda_4=s-m-i
$$
and
$$
\lambda_3+2\lambda_4
=\delta-m\ell-i\deg g_1-(6b+3)(s-m-i).
$$
Since $0\leq\lambda_3\leq1$, the parity of the right-hand side determines
$\lambda_3$, and the two equations then determine $\lambda_4$ and
$\lambda_2$.  Hence the coefficients with $\lambda_5=0$ vanish.

It remains to treat the elements of $\Lambda$ with $\lambda_5>0$.  For these
elements, $\lambda_2=\lambda_3=0$.  Define a second homomorphism by
$$
x\longmapsto t,\qquad
y\longmapsto t^{b-d+1},\qquad
z\longmapsto t^{b-d+2}-\varepsilon t^{b-d+3}.
$$
For $\nu_2=2b-2d+3$, we have
\[
\begin{array}{c|c|c}
 \text{Element}&\psi_\varepsilon&\ord_\varepsilon\\ \hline
 f=y^2&t^{\nu_2-1}&\nu_2-1\\
 g_1&\varepsilon t^{\nu_2}&\nu_2\\
 g_4&t^{\nu_2}-\varepsilon t^{\nu_2+1}&\nu_2\\
 g_5&t^{\nu_2}-\varepsilon t^{\nu_2+1}&\nu_2
\end{array}
\]
For the omitted generators, the orders exceed $\nu_2$ by
\[
 \ord_\varepsilon(g_2)-\nu_2=2d-2,\qquad
 \ord_\varepsilon(g_3)-\nu_2=d-1,\qquad
 \ord_\varepsilon(g_6)-\nu_2=1.
\]
Hence $\psi_\varepsilon(I)k[[t]]\subseteq(t^{
\nu_2})$. For this second map, set
\[
 G_2=\{i\mid\ord_t\psi_\varepsilon(g_i)=\nu_2\}=\{1,4,5\}.
\]
 Every remaining exponent vector has support in
$G_2$, so the two subsets cover $\Lambda$. Apply
Lemma~\ref{lem:coefficient-separation} to the remaining sum.

For a fixed value $\lambda_1=i$, the exponents $\lambda_4$ and $\lambda_5$
satisfy
$$
\begin{cases}
 \lambda_4+\lambda_5=s-m-i,\\
 \lambda_4\deg g_4+\lambda_5\deg g_5
 =\delta-m\ell-i\deg g_1.
\end{cases}
$$
Since $\deg g_5-\deg g_4=3d-2\neq0$, the solution is unique if it exists.
The remaining coefficients therefore vanish.

\medskip

\noindent
\underline{$d\geq2$, $e=0$}:
Use first the map from the preceding case with $\nu_1=2b+1$.
Here $g_2$ has order $\nu_1+1$, and $g_6$ has order
$2b+2d>\nu_1$. Thus $\psi_\varepsilon(I)k[[t]]\subseteq(t^{\nu_1})$. Set
\[
 G_1=\{i\mid\ord_t\psi_\varepsilon(g_i)=\nu_1\}=\{1,3,4\}.
\]
The elements of $\Lambda$ with support in $G_1$ are precisely those with $\lambda_5=0$. We check injectivity of $\lambda\mapsto\lambda_1$
on this subset for Lemma~\ref{lem:coefficient-separation}.

After fixing $\lambda_1=i$, the remaining exponents satisfy
$$
\begin{cases}
 \lambda_3+\lambda_4=s-m-i,\\
 \lambda_3\deg g_3+\lambda_4\deg g_4
 =\delta-m\ell-i\deg g_1.
\end{cases}
$$
Here $\deg g_4-\deg g_3=2$, so there is at most one solution.  Hence all the
coefficients with $\lambda_5=0$ vanish.

Every remaining element of $\Lambda$ has $\lambda_5>0$ and therefore
$\lambda_3=0$. Apply the second map from the preceding case. Now
\[
 \ord_\varepsilon(g_2)-\nu_2=2d-1,\qquad
 \ord_\varepsilon(g_3)-\nu_2=d-1,\qquad
 \ord_\varepsilon(g_6)-\nu_2=1,
\]
so $\psi_\varepsilon(I)k[[t]]\subseteq(t^{\nu_2})$. For this second map, set
\[
 G_2=\{i\mid\ord_t\psi_\varepsilon(g_i)=\nu_2\}=\{1,4,5\}.
\]

Every remaining exponent vector has support in $G_2$, and the two
subsets therefore cover $\Lambda$. Apply Lemma~\ref{lem:coefficient-separation}
to the remaining sum. For fixed $\lambda_1=i$, the exponents
$\lambda_4$ and $\lambda_5$ satisfy the same
number-of-factors equation as above and the corresponding weighted-degree
equation.  Since $\deg g_5-\deg g_4=3d-1\neq0$, the solution is unique if it
exists.  Thus the remaining coefficients vanish as well.

In all six cases, every coefficient $c_\lambda$ is zero.  Hence $\omega=0$,
contrary to the minimality of $m$.  It follows that $I^s$ is integrally
closed for every $s\geq1$, and therefore $I$ is normal.

Combining this with Proposition~\ref{prop:family-one-local-reductions}, we obtain the following.

\begin{theorem}
\label{thm:family-one}
Let $I$ be an ideal in the family~\eqref{eq:family-one}.  Then the Rees algebra $\calR(I)$ is a Cohen--Macaulay normal domain.
\end{theorem}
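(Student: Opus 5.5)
The theorem is essentially an assembly of what has just been established, and the plan is to combine three ingredients. First, $\calR(I)$ is a domain: $S$ is a domain, and $\calR(I)=S[IT]\subseteq S[T]$ is a subring of the polynomial ring $S[T]$. Second, Cohen--Macaulayness is exactly the content of Proposition~\ref{prop:family-one-local-reductions}. There, for each of the four cases $(e,d)$, an explicit three-generated $Q\subseteq I$ is given with $I^2=QI+\m I^2$. Proposition~\ref{prop:local-reduction-criterion} then yields that $\calR(I)$ is Cohen--Macaulay. So nothing further is needed for this part beyond citing these two results.

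Third, for normality I would use the standard fact, recalled in the Introduction, that for a normal domain $S$ the normalization of $\calR(I)$ is $\bigoplus_{n\geq0}\ol{I^n}T^n$. Hence $\calR(I)$ is normal if and only if $\ol{I^n}=I^n$ for all $n\geq1$. The latter is precisely what the argument preceding the theorem proves. We reduce to an infinite field via a purely transcendental extension and faithful flatness, as noted at the start of Section~\ref{sec:general-setup}. We then assume some $I^s$ is not integrally closed and pick a homogeneous $\xi\in\ol{I^s}\setminus I^s$. Next we use the normality of the monomial ideal $J=I+(y^2)=I_\ell$, which is integrally closed and six-generated, hence normal by \cite{AtakaMatsuoka2026}, to write $\xi=\xi_0+f^m\omega$ with $m$ minimal. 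Finally, Lemma~\ref{lem:common_firstfamily} and Lemma~\ref{lem:firstfamily_cases} shrink the index set to $\Lambda$.

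The substantive step is the coefficient separation in each of the six cases $(d\in\{0,1,\geq2\},\,e\in\{0,1\})$. In each case one exhibits the maps $\psi_\varepsilon$ above, checks that $\ord_\varepsilon(f)=\nu-1$ and $\ord_\varepsilon(g_i)\geq\nu$, and verifies injectivity of the parameter-exponent map on $\Lambda_G$ using the factor-count and weighted-degree equations. Lemma~\ref{lem:coefficient-separation} then forces $\omega=0$, contradicting the minimality of $m$.

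I expect the main obstacle to be ensuring that the supports covered by the chosen maps exhaust $\Lambda$, particularly in the $d\geq2$ cases, where two maps are needed. The choice of the second map, with $y\mapsto t^{b-d+1}$, is what makes $g_5$ attain minimal order there. Once that is in place, normality, Cohen--Macaulayness, and the domain property together give the theorem.
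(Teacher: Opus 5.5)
Your proposal is correct and follows the paper's own proof. The paper also gets the domain property from $\calR(I)\subseteq S[T]$, Cohen--Macaulayness from Proposition~\ref{prop:family-one-local-reductions} via Proposition~\ref{prop:local-reduction-criterion}, and normality from the minimal-$m$ argument built on the normality of $J=I+(y^2)=I_\ell$, finished by Lemma~\ref{lem:coefficient-separation} with a second map $y\mapsto t^{b-d+1}$ to cover the exponent vectors with $\lambda_5>0$ when $d\geq2$.
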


\subsection{The second six-generated family}
\label{sec:family-two}

In this subsection, we treat the second family \eqref{eq:family-two} in the
Introduction.  Put
$$
\begin{aligned}
 I={}&(g_1=y^2-x^{2b+1},\ g_2=x^{2b+2},\ g_3=x^{b+1}y,\
        g_4=x^rz,\ g_5=yz,\ g_6=z^2),\\
 &\hspace{35mm} b\geq1,\qquad 1\leq r\leq b+1.
\end{aligned}
$$

\subsubsection{Cohen--Macaulayness}

\begin{proposition}
\label{prop:family-two-reductions}
Define $Q\subseteq I$ by
$$
 Q=
 \begin{cases}
  (g_1,g_3+g_6,g_5),&r=b+1,\\
  (g_1,g_3+g_6,g_4),&1\leq r\leq b.
 \end{cases}
$$
Then the stronger equality $I^2=QI$ holds.  Consequently, $\calR(I)$ is Cohen--Macaulay by
Proposition~\ref{prop:local-reduction-criterion}.
\end{proposition}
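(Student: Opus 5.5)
The plan is to mimic the proof of Proposition~\ref{prop:family-one-local-reductions}, except that every product will now be placed in $QI$ itself, with no error term in $\m I^2$. In each case I write $I=Q+(\text{three remaining generators})$, so that $I^2=QI+(\text{products of the remaining generators})$, and exhibit an explicit identity for each of the six products.

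The basic tools are the following.
\begin{enumerate}[label=(\roman*)]
 \item The substitution $y^2=g_1+x^{2b+1}$.
 \item The fact that pure $x$-powers times $z$ are multiples of $g_4$, and that pure $x$-powers of exponent at least $2b+2$ are multiples of $g_2$.
 \item The trick $g_6=(g_3+g_6)-g_3$, which converts $z^2$ into $x^{b+1}y$ modulo $Q$.
\end{enumerate}
Since $Q\subseteq I$, every term of the form (element of $Q$)$\cdot$(element of $I$) is admissible.

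\emph{Case $1\le r\le b$}, with $Q=(g_1,g_3+g_6,g_4)$. Here I take the remaining generators to be $g_2,g_5,g_6$. The identities I expect to use are:
\begin{itemize}
 \item $g_2g_5=x^{b+1-r}g_3g_4$;
 \item $g_2g_6=x^{2b+2-2r}g_4^2$;
 \item $g_5^2=g_1g_6+x^{2b+1-2r}g_4^2$;
 \item $g_6^2=(g_3+g_6)g_6-x^{b+1-r}g_4g_5$;
 \item $g_5g_6=(g_3+g_6)g_5-g_3g_5$, where $g_3g_5=x^{b+1-r}g_1g_4+x^{b-r}g_2g_4$. This is the step that uses $r\le b$.
 \item $g_2^2=xg_3^2-x^{2b+3}g_1$. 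Here $g_3^2=(g_3+g_6)g_3-g_3g_6$, and $g_3g_6=x^{b+1-r}g_4g_5$ as before.
\end{itemize}
All exponents of $x$ appearing are non-negative precisely because $1\le r\le b$.

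\emph{Case $r=b+1$}, with $Q=(g_1,g_3+g_6,g_5)$. Now $g_4=x^{b+1}z$, and the remaining generators are $g_2,g_4,g_6$. I will produce analogous identities:
\begin{itemize}
 \item $g_2g_6=g_4^2$. This is the delicate product, since $g_4^2$ itself has to be placed in $QI$.
 \item $g_4^2$: rewrite $x^{2b+2}z^2$ through $x^{2b+1}=y^2-g_1$ as $x z\cdot y^2z-xzg_1z$, giving $g_4^2=xg_5^2-xg_1g_6$.
 \item $g_2g_4$: rewrite $x^{2b+2}$ in the same way, which yields $g_2g_4=x^{b+1}zy^2-x^{b+1}zg_1=g_3g_5-x^{b+1}zg_1$, and this lies in $QI$ after writing $x^{b+1}z=g_4$.
 \item $g_4g_6$ and $g_6^2$: handle these with $z^2=(g_3+g_6)-g_3$ together with $g_3g_6=g_4g_5$ and $g_3g_4=g_2g_5$.
 \item $g_2^2$: this is as in the first case.
\end{itemize}

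The main obstacle is that we want equality $I^2=QI$ on the nose. The absorbing $\m I^2$ term used for family~\eqref{eq:family-one} is therefore unavailable, and every product must close up exactly. The danger lies in circular rewriting, for instance $g_3^2\leftrightarrow g_3g_6\leftrightarrow g_4g_5$. I will order the verification so that each identity only invokes products already known to lie in $QI$ or visibly containing a factor from $Q$. I would first check, in both cases, the mixed products $g_3g_5$, $g_3g_6$ and $g_4^2$, since everything else reduces to these. Once $I^2=QI$ is established, $I^2=QI+\m I^2$ holds trivially, and Proposition~\ref{prop:local-reduction-criterion} gives Cohen--Macaulayness.
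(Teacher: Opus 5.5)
Your proposal is correct and follows essentially the same route as the paper: the same decompositions $I=Q+(g_2,g_5,g_6)$ and $I=Q+(g_2,g_4,g_6)$, and essentially the same six explicit identities in each case. The one slip is in the case $r=b+1$, where your formula for $g_2g_4$ drops a factor of $x$ (its right-hand side equals $x^{3b+2}z$, not $x^{3b+3}z$); the correct identity is $g_2g_4=xg_3g_5-xg_1g_4$, which still lies in $QI$.
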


\begin{proof}
We consider the two cases in the definition of $Q$.

\medskip

\noindent
\underline{$r=b+1$}: Since $Q=(g_1,g_3+g_6,g_5)$, we have
$I=Q+(g_2,g_4,g_6)$.  Hence it is enough to treat the six products
$g_ig_j$ with $i,j\in\{2,4,6\}$.
\begin{itemize}
 \item $g_2^2=xg_3(g_3+g_6)-xg_4g_5-xg_1g_2\in QI$.
 \item $g_2g_4=xg_3g_5-xg_1g_4\in QI$.
 \item $g_2g_6=xg_5^2-xg_1g_6\in QI$.
 \item $g_4^2=g_2g_6\in QI$.
 \item $g_4g_6=(g_3+g_6)g_4-g_2g_5\in QI$.
 \item $g_6^2=(g_3+g_6)g_6-g_4g_5\in QI$.
\end{itemize}
Thus $I^2=QI$.

\medskip

\noindent
\underline{$1\leq r\leq b$}: Since $Q=(g_1,g_3+g_6,g_4)$, we have
$I=Q+(g_2,g_5,g_6)$.  The six remaining products are treated as follows:
\begin{itemize}
 \item
 $g_2^2=xg_3(g_3+g_6)-x^{b+2-r}g_4g_5-xg_1g_2\in QI$.
 \item $g_2g_5=x^{b+1-r}g_3g_4\in QI$.
 \item $g_2g_6=x^{2b+2-2r}g_4^2\in QI$.
 \item $g_5^2=g_1g_6+x^{2b+1-2r}g_4^2\in QI$.
 \item
 $g_5g_6=(g_3+g_6)g_5-x^{b+1-r}g_1g_4-x^{b-r}g_2g_4\in QI$.
 \item $g_6^2=(g_3+g_6)g_6-x^{b+1-r}g_4g_5\in QI$.
\end{itemize}
Thus $I^2=QI$ also in this case.  In particular, unlike the calculation for
the first family, these choices of $Q$ are reductions of $I$ already in $S$.
\end{proof}

\subsubsection{Normality}

We next prove that $I$ is normal.  Recall from
Subsection~\ref{subsec:symmetric-multiplicity-four} that we have chosen the
grading given by $\deg x=4$, $\deg y=4b+2$, and
$\deg z=8b+7-4r$.  For this grading, the least relation degree is
$\ell=8b+4$.  The ideal $I$ itself is homogeneous for any positive choice of
$\deg z$.

Let $f=y^2$ and $J=I+(f)$.  We have
$$
 J=(y^2,x^{2b+1},x^{b+1}y,x^rz,yz,z^2).
$$
As observed in Section~\ref{sec:target-ideals}, $J=I_\ell$ and hence $J$
is integrally closed.  Since $J$ is a monomial ideal with six minimal
generators, it is normal by \cite{AtakaMatsuoka2026}.

\begin{lemma}\label{lem:common_secondfamily}
The following assertions hold.
\begin{enumerate}
 \item $\m f\subseteq I$.
 \item $fg_2,fg_6\in I^2$.
 \item If $r=b+1$, then $fg_4\in I^2$.
 \item If $1\leq r\leq b$, then $fg_5\in I^2$.
\end{enumerate}
\end{lemma}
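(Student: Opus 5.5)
All four assertions follow from explicit polynomial identities, using $f=g_1+x^{2b+1}$, which is immediate from $g_1=y^2-x^{2b+1}$. Since $I$ and $I^2$ are ideals, it suffices in each case to write the relevant product as an $S$-linear combination of generators of $I$, respectively of products $g_ig_j$. As in Lemma~\ref{lem:common_firstfamily}, I would simply record these identities. The only care needed is that every power of $x$ that appears has a non-negative exponent, and this is where the hypotheses $b\geq1$ and $r\leq b$ enter.

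For (1), I would check the three products $xf$, $yf$, $zf$ separately:
\[
 xf=xg_1+x^{2b+2}=xg_1+g_2,\qquad
 yf=yg_1+x^{2b+1}y=yg_1+x^{b}g_3,\qquad
 zf=y(yz)=yg_5 .
\]
The middle identity uses $2b+1\geq b+1$, which holds because $b\geq0$. Hence $\m f\subseteq I$.

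For (2), both products are squares of generators:
\[
 fg_2=y^2x^{2b+2}=(x^{b+1}y)^2=g_3^2,\qquad
 fg_6=y^2z^2=(yz)^2=g_5^2 .
\]
For (3), when $r=b+1$ we have $g_4=x^{b+1}z$, so
\[
 fg_4=x^{b+1}y\cdot yz=g_3g_5\in I^2 .
\]

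For (4), I would combine the expression for $yf$ from part (1) with $g_5=yz$:
\[
 fg_5=y^3z=(yg_1+x^bg_3)z=g_1g_5+x^{b}z\,g_3=g_1g_5+x^{b-r}g_3g_4 .
\]
This lies in $I^2$ precisely because $b-r\geq0$, which is the hypothesis $r\leq b$. This is also why (3) and (4) split at $r=b+1$: in that case the exponent $b-r$ would be $-1$, and one uses $fg_4$ instead of $fg_5$. The only step needing any thought is (4), where $fg_5$ is first rewritten through the relation $g_1$ before it can be expressed as a sum of products of two generators; the remaining parts are direct monomial checks.
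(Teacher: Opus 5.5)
Your proposal is correct and follows the paper's proof. Both arguments simply record the identities $xf=xg_1+g_2$, $yf=yg_1+x^bg_3$, $fg_2=g_3^2$, $fg_6=g_5^2$, $fg_4=g_3g_5$ (for $r=b+1$) and $fg_5=g_1g_5+x^{b-r}g_3g_4$ (for $r\leq b$); the only difference is that your $zf=yg_5$ is a slightly more direct substitute for the paper's $zf=zg_1+x^{2b+1-r}g_4$.
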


\begin{proof}
The first assertion follows from
$$
 xf=xg_1+g_2,\qquad
 yf=yg_1+x^bg_3,\qquad
 zf=zg_1+x^{2b+1-r}g_4.
$$
The remaining assertions follow from
$$
 fg_2=g_3^2,\qquad fg_6=g_5^2,
$$
together with
$$
 fg_4=g_3g_5\quad(r=b+1),\qquad
 fg_5=g_1g_5+x^{b-r}g_3g_4\quad(1\leq r\leq b).
$$
\end{proof}

We apply the strategy described in Subsection~\ref{subsec:normality}.
Suppose that $I^s$ is not integrally closed for some positive integer $s$.
Choose $\xi$, $m$, and $\omega$ as in that subsection, so that
$$
 \xi=\xi_0+f^m\omega,
$$
where
$\xi_0\in I^s+fI^{s-1}+\cdots+f^{m-1}I^{s-m+1}$ and
$\omega\in I^{s-m}$.  By Lemma~\ref{lem:common_secondfamily}, we may write
$$
 \omega=\sum_{\lambda\in\Lambda}c_\lambda g^\lambda,
 \qquad c_\lambda\in k,
$$
where
$$
 \Lambda=
 \begin{cases}
 \{\lambda\in\Lambda^{s-m}_{\delta-m\ell}
       \mid\lambda_2=\lambda_4=\lambda_6=0\},&r=b+1,\\
 \{\lambda\in\Lambda^{s-m}_{\delta-m\ell}
       \mid\lambda_2=\lambda_5=\lambda_6=0\},&1\leq r\leq b.
 \end{cases}
$$

Put $\nu=4b+3$, and take $\varepsilon_1,\varepsilon_2\in k^\times$.  If
$\ch k\neq2$, define a homomorphism
$\psi_{\varepsilon_1,\varepsilon_2}\colon S\to k[[t]]$ by
$$
 x\longmapsto t^2,\qquad
 y\longmapsto t^{2b+1}+\varepsilon_1t^{2b+2}.
$$
If $\ch k=2$, use instead
$$
 x\longmapsto t^2(1+\varepsilon_1t),\qquad
 y\longmapsto t^{2b+1}.
$$
In either case, complete the definition by setting
$$
 z\longmapsto
 \begin{cases}
  \varepsilon_2t^{2b+2},&r=b+1,\\
  \varepsilon_2t^{\nu-2r},&1\leq r\leq b.
 \end{cases}
$$
Let $\kappa=2$ if $\ch k\neq2$, and let $\kappa=1$ if
$\ch k=2$.  For the elements that occur in $\omega$, the
leading terms are as follows:
\[
\begin{array}{c|c|c}
 \text{Element}&\psi_{\varepsilon_1,\varepsilon_2}&
 \ord_t\psi_{\varepsilon_1,\varepsilon_2}\\ \hline
 f=y^2&t^{\nu-1}+\text{(higher-order terms)}&\nu-1\\
 g_1&\kappa\varepsilon_1t^\nu+\text{(higher-order terms)}&\nu\\
 g_3&t^\nu+\text{(higher-order terms)}&\nu\\
 g_5\quad(r=b+1)&\varepsilon_2t^\nu+\text{(higher-order terms)}&\nu\\
 g_4\quad(1\leq r\leq b)&\varepsilon_2t^\nu+\text{(higher-order terms)}&\nu
\end{array}
\]
Moreover, $\psi_{\varepsilon_1,\varepsilon_2}(I)k[[t]]\subseteq(t^\nu)$.

Suppose first that $r=b+1$. The omitted generators $g_2,g_4,g_6$
have orders $\nu+1,\nu+1,\nu+1$, respectively. Set
\[
 G=\{i\mid\ord_t\psi_{\varepsilon_1,\varepsilon_2}(g_i)=\nu\}=\{1,3,5\}.
\]
Every $\lambda\in\Lambda$ has support in $G$.
In Lemma~\ref{lem:coefficient-separation}, the parameter exponent map is
$\lambda\mapsto(\lambda_1,\lambda_5)$.

The parameter exponents determine $\lambda_1$ and $\lambda_5$, respectively, and the
number-of-factors equation
$$
 \lambda_1+\lambda_3+\lambda_5=s-m
$$
then determines $\lambda_3$.  Thus distinct elements of $\Lambda$ give
distinct monomials in $\varepsilon_1$ and $\varepsilon_2$, and hence
$c_\lambda=0$ for every
$\lambda\in\Lambda$.

If $1\leq r\leq b$, the omitted generators $g_2,g_5,g_6$ have orders
$\nu+1$, $\nu+2b+1-2r$, and $\nu+4b+3-4r$, respectively.
Set
\[
 G=\{i\mid\ord_t\psi_{\varepsilon_1,\varepsilon_2}(g_i)=\nu\}=\{1,3,4\}.
\]
Every $\lambda\in\Lambda$ has support in $G$. Apply Lemma~\ref{lem:coefficient-separation}
with parameter exponent map $\lambda\mapsto(\lambda_1,\lambda_4)$.

Here the exponents of $\varepsilon_1$ and $\varepsilon_2$ determine
$\lambda_1$ and $\lambda_4$, while
$$
 \lambda_1+\lambda_3+\lambda_4=s-m
$$
determines $\lambda_3$.  Therefore every coefficient $c_\lambda$ is again
zero.

In both cases, $\omega=0$, contrary to the minimality of $m$.  It follows
that $I^s$ is integrally closed for every $s\geq1$, and hence $I$ is normal.

Combining this with Proposition~\ref{prop:family-two-reductions}, we obtain
the following.

\begin{theorem}
\label{thm:family-two}
Let $I$ be an ideal in the family~\eqref{eq:family-two}.  Then the Rees
algebra $\calR(I)$ is a Cohen--Macaulay normal domain.
\end{theorem}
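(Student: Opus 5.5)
The theorem is an assembly of results already established in this subsection, so I will just combine them. There are three properties to check: domain, normal, and Cohen--Macaulay.

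First, $\calR(I)=S[IT]$ is a subring of the polynomial ring $S[T]$, so it is a domain. Its integral closure in $S[T]$, which coincides with its normalization because $S$ is a normal domain, is $\bigoplus_{n\geq0}\ol{I^n}T^n$. Hence $\calR(I)$ is normal precisely when $I^s=\ol{I^s}$ for every $s\geq1$.

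Second, normality of $I$ comes from the coefficient-separation argument given just before the statement, whose ingredients are the following:
\begin{itemize}
\item the normality of the monomial ideal $J=I+(y^2)=I_\ell$, by \cite{AtakaMatsuoka2026};
\item the inclusions of Lemma~\ref{lem:common_secondfamily}, which reduce $\Lambda$ to exponent vectors supported in $\{1,3,5\}$ or $\{1,3,4\}$;
\item the homomorphisms $\psi_{\varepsilon_1,\varepsilon_2}$, treated separately for $\ch k\neq2$ and $\ch k=2$, to which Lemma~\ref{lem:coefficient-separation} is applied.
\end{itemize}
Together these give $\omega=0$, contradicting the minimality of $m$. That lemma needs $k$ infinite. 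As noted at the start of Section~\ref{sec:general-setup}, we may pass to a purely transcendental extension $k(u)$, and integral closedness of the powers descends by faithful flatness of $S\to k(u)\otimes_k S$. So I will state this reduction explicitly.

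Third, Cohen--Macaulayness follows from Proposition~\ref{prop:family-two-reductions}. It gives a three-generated $Q$ with $I^2=QI\subseteq QI+\m I^2$, and Proposition~\ref{prop:local-reduction-criterion} then shows that $\calR(I)$ is Cohen--Macaulay.

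There is no real obstacle left. The only point needing care is that the normality argument and the grading are compatible. The argument uses the weighted grading $\deg x=4$, $\deg y=4b+2$, $\deg z=8b+7-4r$, for which $I$, $J$ and $f$ are homogeneous and $\ell=8b+4$. Normality itself does not depend on the choice of grading, so this causes no problem.
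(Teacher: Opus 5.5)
Your proposal is correct and follows the paper's proof. You combine the coefficient-separation normality argument, which uses $J=I+(y^2)=I_\ell$ normal by \cite{AtakaMatsuoka2026}, Lemma~\ref{lem:common_secondfamily}, and the maps $\psi_{\varepsilon_1,\varepsilon_2}$, with the equality $I^2=QI$ of Proposition~\ref{prop:family-two-reductions} fed into Proposition~\ref{prop:local-reduction-criterion}; your explicit passage to an infinite field by a purely transcendental extension is the same reduction the paper records at the start of Section~\ref{sec:general-setup}.
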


\section{A seven-generated example}
\label{sec:nonsymmetric-example}

We now consider the non-symmetric numerical semigroup
$$
 H=\langle4,9,15\rangle.
$$
In the notation used for the case $v<2u$ in
Subsection~\ref{subsec:nonsymmetric-multiplicity-four}, this example
corresponds to $(b,c,e)=(6,4,0)$.
Relabel $x_1,x_2,x_3$ as $x,y,z$, respectively, so that
$\deg x=4$, $\deg y=9$, and $\deg z=15$.
Its Ap\'ery set with respect to $4$ is $\{0,9,15,18\}$, and the defining
ideal of $k[H]$ is
$$
 \fkp_H=(x^6-yz,\ y^3-x^3z,\ z^2-x^3y^2).
$$
The degrees of these three relations are $24$, $27$, and $30$, respectively.
Thus the least relation degree is $\ell=24$.  The least elements of $H$ that
are at least $25$ in the four residue classes modulo $4$ are represented by
$$
 x^7,\qquad x^4y,\qquad x^3z,\qquad x^2y^2.
$$
It follows that
$$
 I=I_{25}=(g_1=yz-x^6,\ g_2=x^7,\ g_3=x^4y,\ g_4=x^2y^2,
             \ g_5=x^3z,\ g_6=y^3,\ g_7=z^2).
$$
Indeed, the last two generators are obtained from the relations
$y^3-x^3z$ and $z^2-x^3y^2$.  By
Proposition~\ref{prop:inverse-image-ideals}(1), the ideal $I$ is integrally
closed, and Theorem~\ref{thm:classification}(3) shows that these seven
generators are minimal.

\subsection{Cohen--Macaulayness}

\begin{proposition}
\label{prop:example-local-reduction}
Let
$$
 Q=(g_1,g_4+g_5,g_3+g_6+g_7).
$$
Then
$$
 I^2=QI+\m I^2.
$$
Consequently, the reduction number of $IS_\m$ is exactly one, and
$\calR(I)$ is Cohen--Macaulay.
\end{proposition}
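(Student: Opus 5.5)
The plan is to follow the scheme of Subsection~\ref{subsec:cm-strategy}. Since $Q=(g_1,g_4+g_5,g_3+g_6+g_7)$, we have
\[
I=Q+(g_2,g_3,g_5,g_6),
\]
so $I^2=QI+(g_ig_j\mid i,j\in\{2,3,5,6\})$. It therefore suffices to show that each of the ten products $g_ig_j$ with $i,j\in\{2,3,5,6\}$ lies in $QI+\m I^2$. Proposition~\ref{prop:local-reduction-criterion} then gives the Cohen--Macaulayness of $\calR(I)$, together with the fact that $QS_\m$ is a minimal reduction of $IS_\m$ with reduction number at most one.

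For the ten products, I will first write down the monomial and binomial identities among the generators, in the same way as in Lemmas~\ref{lem:firstfamily_cases} and~\ref{lem:common_secondfamily}. For example,
\[
g_2g_6=x^7y^3=x\,g_3g_4\in\m I^2,\qquad g_2g_4=x\,g_3^2,
\]
and the identity $yz=g_1+x^6$ lets us rewrite any monomial containing $yz$ in terms of $g_1$ and powers of $x$. Further useful products include
\[
g_3g_5=x^7yz,\quad g_5^2=x^6z^2=x^6g_7,\quad g_5g_6=x^3y^3z,\quad g_6^2=y^6,\quad g_3g_6=x^4y^4=x^2y^2\cdot x^2y^2=g_4^2 .
\]
The strategy for the remaining products is the one used in Proposition~\ref{prop:family-one-local-reductions}. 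Multiply $g_j$ by a generator of $Q$ such as $g_4+g_5$ or $g_3+g_6+g_7$, and expand. Each cross term is then either already in $\m I^2$ (after absorbing a factor of $x$, $y$ or $z$), or it is another product from the list. In the latter case we solve a small triangular system, ordered by weighted degree. The degrees involved are $\deg g_1=24$, $\deg g_2=28$, $\deg g_3=25$, $\deg g_4=26$, $\deg g_5=\deg g_6=27$ and $\deg g_7=30$. Because of this, $g_3g_6$ and $g_4^2$ coincide, $g_5^2$ and $g_6^2$ can be traded against $g_7$ terms, and so on.

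The main obstacle is the handful of products of lowest degree, namely $g_3^2$, $g_3g_5$ and $g_3g_6$. Here no factor of $\m$ can be split off directly, and the inhomogeneity of $g_4+g_5$ and $g_3+g_6+g_7$ means the expansions mix degrees. For these products I would first try the following: express $g_3^2=x^8y^2$ as $x^2y^2(yz-g_1)\cdot$(adjusted) using $x^6=yz-g_1$, giving
\[
g_3^2=-g_1g_4+x^2y^3z.
\]
Then rewrite $x^2y^3z=z\cdot x^2y^3$, which lies in $\m I^2$ via $y^3=g_6$ and $x^2z\cdot y^3$. If this fails, I would search for a combination $g_3(g_3+g_6+g_7)-g_3g_6-g_3g_7$ with $g_3g_7=x^4yz^2=z\,g_3\,z$ absorbed using $yz=g_1+x^6$.

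Finally, the claim that the reduction number is \emph{exactly} one follows because it cannot be zero. Indeed, $IS_\m=QS_\m$ would force $\mu(IS_\m)\le3$, whereas $\mu_S(I)=7$ by Theorem~\ref{thm:classification}(3).
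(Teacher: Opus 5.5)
\textbf{There is a genuine gap.} Your reduction to ten products is sound. The complement $(g_2,g_3,g_5,g_6)$ works as well as the paper's $(g_2,g_3,g_5,g_7)$, since $g_4=(g_4+g_5)-g_5$ and $g_7=(g_3+g_6+g_7)-g_3-g_6$. Your argument that the reduction number cannot be zero is also fine. But the proposition consists precisely of verifying those ten memberships, and the proposal carries out none of them.

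The one explicit attempt fails. From $g_3^2=-g_1g_4+x^2y^3z$ you conclude $x^2y^3z\in\m I^2$, and this is impossible. The monomial $x^2y^3z$ has weighted degree $8+27+15=50$, whereas every nonzero homogeneous element of $\m I^2$ has degree at least $4+2\cdot 24=52$. Writing it as $x^2z\cdot y^3$ only places it in $\m I$, since $x^2z\notin I$. Moreover, because $g_1g_4\in QI$, your identity only says $g_3^2\equiv x^2y^3z\pmod{QI}$, and $x^2y^3z=g_1g_4+g_3^2$ leads straight back to $g_3^2$.

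You correctly singled out $g_3^2$, $g_3g_5$, $g_3g_6$ as the hard products. The point is that they sit at or below the initial degree $52$ of $\m I^2$, and in degree $52$ the ideal $\m I^2$ contains only the multiples of $xg_1^2$. So they have to be reached through the inhomogeneous generators $g_4+g_5$ and $g_3+g_6+g_7$ of $Q$. Expanding against these produces cross terms of larger degree, which must then be placed in $QI+\m I^2$ by further identities.

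Your fallback $g_3(g_3+g_6+g_7)-g_3g_6-g_3g_7$ is the right first move, but it leaves $g_3g_6=g_4^2$ and $g_3g_7=x^4zg_1+g_2g_5$ unresolved. The missing chain, as in the paper, is:
\begin{itemize}
\item $g_4^2=(g_4+g_5)g_4-g_4g_5$, with $g_4g_5=x^5yg_1+g_2g_3$;
\item $g_2g_3=(g_3+g_6+g_7)g_2-g_2g_6-g_2g_7$, with $g_2g_6=xg_3g_4$ and $g_2g_7=xg_5^2$ in $\m I^2$;
\item $g_2g_5=(g_4+g_5)g_2-g_2g_4$, with $g_2g_4=xg_3^2\in\m I^2$.
\end{itemize}
Here $x^5yg_1=g_1\cdot xg_3$ and $x^4zg_1=g_1\cdot xg_5$ lie in $QI$.

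Similar explicit identities are needed for the remaining products:
\begin{itemize}
\item $g_3g_5$, via $(g_3+g_6+g_7)g_5$, $g_5g_6=x^3y^2g_1+g_2g_4$, and $g_5g_7=(g_4+g_5)g_7-x^2yzg_1-x^8yz$;
\item $g_5^2=(g_4+g_5)g_5-x^5yg_1-g_2g_3$;
\item $g_2^2=xg_3g_5-xg_1g_2$;
\item with your choice of complement, $g_6^2=(g_3+g_6+g_7)g_6-g_3g_6-g_6g_7$, where $g_6g_7=y^2zg_1+xg_4g_5$ and $y^2z=yg_1+x^2g_3\in I$.
\end{itemize}
Without such a concrete chain, the proposal describes the method but does not prove $I^2=QI+\m I^2$.
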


\begin{proof}
Since $I=Q+(g_2,g_3,g_5,g_7)$, it is enough to treat the ten products
$g_ig_j$ with $i,j\in\{2,3,5,7\}$.  We first note that
$g_2g_6=xg_3g_4$, $x^8yz=xg_3g_5$, and
$g_2g_4=xg_3^2$ all belong to $\m I^2$.  Moreover,
$$
 g_5g_6=x^3y^2g_1+g_2g_4\in QI+\m I^2.
$$
The remaining products are treated in the following order.
\begin{itemize}
 \item $g_2g_7=xg_5^2\in\m I^2$.
 \item
 $g_2g_3=(g_3+g_6+g_7)g_2-g_2g_6-g_2g_7\in QI+\m I^2$.
 \item
 $g_5^2=(g_4+g_5)g_5-x^5yg_1-g_2g_3\in QI+\m I^2$.
 \item
 $g_5g_7=(g_4+g_5)g_7-x^2yzg_1-x^8yz\in QI+\m I^2$.
 \item
 $g_3g_5=(g_3+g_6+g_7)g_5-g_5g_6-g_5g_7
 \in QI+\m I^2$.
 \item
 $g_2g_5=(g_4+g_5)g_2-g_2g_4\in QI+\m I^2$.
 \item
 $g_3g_7=x^4zg_1+g_2g_5\in QI+\m I^2$.
 \item
 Since $g_4g_5=x^5yg_1+g_2g_3$, we have
 $g_4^2=(g_4+g_5)g_4-g_4g_5\in QI+\m I^2$.  Hence
 $g_3^2=(g_3+g_6+g_7)g_3-g_4^2-g_3g_7\in QI+\m I^2$.
 \item
 We have $g_6g_7=y^2zg_1+xg_4g_5\in QI+\m I^2$.  Therefore
 $g_7^2=(g_3+g_6+g_7)g_7-g_3g_7-g_6g_7\in QI+\m I^2$.
 \item $g_2^2=xg_3g_5-xg_1g_2\in QI+\m I^2$.
\end{itemize}
Thus $I^2=QI+\m I^2$.

Since $I$ is $\m$-primary, localization at $\m$ preserves the number of
minimal generators.  Hence $IS_\m$ is not a parameter ideal, and its
reduction number cannot be zero.  The equality just proved shows that its
reduction number is one.  Proposition~\ref{prop:local-reduction-criterion}
now shows that $\calR(I)$ is Cohen--Macaulay.
\end{proof}

\subsection{Normality}

\begin{proposition}
\label{prop:example-normality}
The ideal $I$ is normal.
\end{proposition}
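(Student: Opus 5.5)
The plan is to run the strategy of Subsection~\ref{subsec:normality} with $f=yz$, which has degree $\ell=24$. Then $J=I+(f)=I+(x^6)=I_{24}$. This is the monomial ideal
\[
J=(x^6,\ yz,\ x^4y,\ x^2y^2,\ x^3z,\ y^3,\ z^2),
\]
in which $x^7$ has become redundant, so $J$ has seven minimal generators. By Proposition~\ref{prop:inverse-image-ideals}(1) it is integrally closed, and by \cite[Theorem~3.1]{AtakaMatsuoka2026} it is normal. The condition $\m f\subseteq I$ follows from three identities:
\[
xyz=xg_1+g_2,\qquad y^2z=yg_1+x^2g_3,\qquad yz^2=yg_7.
\]
Take $\xi\in\ol{I^s}\setminus I^s$ homogeneous, choose $m$ minimal, and write $\xi=\xi_0+f^m\omega$ with $\omega=\sum c_\lambda g^\lambda$ and $c_\lambda\in k$, exactly as in Subsection~\ref{subsec:normality}.

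The next step removes as many generators as possible using products $fg_i\in I^2$. I expect the following identities:
\[
fg_2=g_3g_5,\qquad fg_4=g_1g_4+g_3^2,\qquad fg_6=g_1g_6+g_3g_4,\qquad fg_7=g_1g_7+g_5^2.
\]
These allow us to assume $\lambda_2=\lambda_4=\lambda_6=\lambda_7=0$. The products $fg_3$ and $fg_5$ do not obviously lie in $I^2$; for instance, $fg_5=g_1g_5+x^9z$, and $x^9z$ does not seem to be a product of two generators. I therefore keep $g_1,g_3,g_5$. The resulting set $\Lambda$ consists of those $\lambda\in\Lambda^{s-m}_{\delta-m\ell}$ with support in $\{1,3,5\}$.

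For the coefficient separation I would use a single one-parameter map $\psi_\varepsilon\colon S\to k[[t]]$, namely
\[
x\mapsto t^2,\qquad y\mapsto t^5,\qquad z\mapsto t^7+\varepsilon t^8.
\]
This is chosen so that $\ord f=12$ and the three surviving generators have order exactly $\nu=13$. Under this map $\psi_\varepsilon(g_1)=\varepsilon t^{13}$, while $g_3$ and $g_5$ both map to $t^{13}+(\text{higher-order terms})$. The other generators have orders
\[
\ord g_2=14,\quad \ord g_4=14,\quad \ord g_6=15,\quad \ord g_7=14,
\]
so $\psi_\varepsilon(I)k[[t]]\subseteq(t^{13})$ and $G=\{1,3,5\}$. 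Lemma~\ref{lem:coefficient-separation} then requires injectivity of $\lambda\mapsto\lambda_1$ on $\Lambda$. Once $\lambda_1$ is fixed, we have $\lambda_3+\lambda_5=s-m-\lambda_1$. The weighted-degree equation then fixes $25\lambda_3+27\lambda_5$, and since $\deg g_5-\deg g_3=2\neq0$, the pair $(\lambda_3,\lambda_5)$ is uniquely determined. Hence every $c_\lambda$ vanishes, so $\omega=0$, contradicting the minimality of $m$.

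The main obstacle is the reduction step. I must confirm the four identities for $fg_i$ exactly, and in particular that no further relation is needed, since $g_3$ and $g_5$ are simply retained. If any of the identities fails, I would retain the corresponding generator instead. Its order under $\psi_\varepsilon$ is then $14$ or more, so it lies outside $G$, and I would treat its support with a second map (for example, one adapted to $x\mapsto t$), in the manner of the $d\geq2$ cases of the first family. Choosing the orders of $x,y,z$ is the other delicate point. The conditions $6a=\ord y+\ord z$, $\ord y\geq 2a+1$ and $\ord z\geq 3a+1$ force $a\geq2$, and $a=2$ is the choice that makes $g_3$ and $g_5$ attain order $\nu$ simultaneously.
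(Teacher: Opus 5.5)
Your proposal is correct and follows essentially the same route as the paper: the same $f=yz$ and $J=I_{24}$, the same four identities that remove $g_2,g_4,g_6,g_7$, and a single separation at $\nu=13$ with $G=\{1,3,5\}$. The only difference is the separating map. The paper uses the two-parameter map $y\mapsto\varepsilon_2t^5$, $z\mapsto\varepsilon_2^{-1}t^7(1+\varepsilon_1t)$, so that $(\lambda_1,\lambda_3-\lambda_5)$ together with $|\lambda|=s-m$ already determines $\lambda$ without the grading. Your one-parameter map instead invokes the weighted-degree equation with $\deg g_5-\deg g_3=2$, which is equally valid since $\Lambda$ has fixed weighted degree.
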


\begin{proof}
Set $f=yz$ and $J=I+(f)$.  Then
$$
 J=(yz,x^6,x^4y,x^2y^2,x^3z,y^3,z^2)=I_{24}.
$$
Thus $J$ is an integrally closed monomial ideal generated by at most seven
elements, and hence it is normal by \cite{AtakaMatsuoka2026}.  Moreover,
$$
 xf=xg_1+g_2,\qquad
 yf=yg_1+x^2g_3,\qquad
 zf=zg_1+x^3g_5,
$$
so $\m f\subseteq I$.  We shall also use the identities
$$
 fg_2=g_3g_5,\qquad
 fg_4=g_1g_4+g_3^2,\qquad
 fg_6=g_1g_6+g_3g_4,\qquad
 fg_7=g_1g_7+g_5^2.
$$

We use the setup of Subsection~\ref{subsec:normality} with $\mu=7$
and apply Lemma~\ref{lem:coefficient-separation}.

Suppose that $I^s$ is not integrally closed for some positive integer $s$.
Choose $\xi$, $m$, and $\omega$ as in that subsection, so that
$$
 \xi=\xi_0+f^m\omega,
$$
where
$\xi_0\in I^s+fI^{s-1}+\cdots+f^{m-1}I^{s-m+1}$ and
$\omega\in I^{s-m}$ is homogeneous of degree $\delta-24m$.
The four identities above allow us to remove every term involving
$g_2$, $g_4$, $g_6$, or $g_7$.  We may therefore write
$$
 \omega=\sum_{\lambda\in\Lambda}c_\lambda g^\lambda,
 \qquad c_\lambda\in k,
$$
where $g^\lambda=g_1^{\lambda_1}g_2^{\lambda_2}\cdots g_7^{\lambda_7}$ and
$$
\begin{aligned}
 \Lambda=\{\lambda=(\lambda_1,\lambda_2,\ldots,\lambda_7)\in\mathbb N^7
 \mid {}&\lambda_1+\lambda_2+\cdots+\lambda_7=s-m,\\
 &\lambda_1\deg g_1+\lambda_2\deg g_2+\cdots+\lambda_7\deg g_7
       =\delta-24m,\\
 &\lambda_2=\lambda_4=\lambda_6=\lambda_7=0\}.
\end{aligned}
$$

For $\varepsilon_1,\varepsilon_2\in k^\times$, define a homomorphism
$\psi_{\varepsilon_1,\varepsilon_2}\colon S\to k[[t]]$ by
$$
 x\longmapsto t^2,\qquad
 y\longmapsto\varepsilon_2t^5,\qquad
 z\longmapsto\varepsilon_2^{-1}t^7(1+\varepsilon_1t).
$$
Put $\nu=13$.  The elements occurring in $\omega$ have the following
images:
\[
\begin{array}{c|c|c}
 \text{Element}&\psi_{\varepsilon_1,\varepsilon_2}&
 \ord_t\psi_{\varepsilon_1,\varepsilon_2}\\ \hline
 f&t^{12}(1+\varepsilon_1t)&\nu-1\\
 g_1&\varepsilon_1t^{13}&\nu\\
 g_3&\varepsilon_2t^{13}&\nu\\
 g_5&\varepsilon_2^{-1}t^{13}(1+\varepsilon_1t)&\nu
\end{array}
\]
Moreover,
$$
 \ord_t\psi_{\varepsilon_1,\varepsilon_2}(g_2)
 =\ord_t\psi_{\varepsilon_1,\varepsilon_2}(g_4)
 =\ord_t\psi_{\varepsilon_1,\varepsilon_2}(g_7)=14,
 \qquad \ord_t\psi_{\varepsilon_1,\varepsilon_2}(g_6)=15.
$$
Hence
$\psi_{\varepsilon_1,\varepsilon_2}(I)k[[t]]\subseteq(t^\nu)$.

Set
\[
 G=\{i\mid\ord_t\psi_{\varepsilon_1,\varepsilon_2}(g_i)=\nu\}=\{1,3,5\}.
\]
Every $\lambda\in\Lambda$ has support in $G$. The parameter exponent
map in Lemma~\ref{lem:coefficient-separation} is
$\lambda\mapsto(\lambda_1,\lambda_3-\lambda_5)$.

It remains to check that distinct elements of $\Lambda$ give distinct
parameter monomials.  The exponent of $\varepsilon_1$ determines $\lambda_1$,
and the exponent of $\varepsilon_2$ determines $\lambda_3-\lambda_5$.  On the other hand,
the number of factors gives
$$
 \lambda_3+\lambda_5=s-m-\lambda_1.
$$
These two integer equations determine $\lambda_3$ and $\lambda_5$ uniquely
whenever a solution exists.  Thus every $c_\lambda$ is zero.  This gives
$\omega=0$, contrary to the minimality of $m$.  It follows that $I^s$ is
integrally closed for every $s\geq1$, and hence $I$ is normal.
\end{proof}

Combining Propositions~\ref{prop:example-local-reduction}
and~\ref{prop:example-normality}, we obtain the following.

\begin{theorem}
\label{thm:example-rees}
For the ideal $I=I_{25}$ arising from $H=\langle4,9,15\rangle$, the Rees
algebra $\calR(I)$ is a Cohen--Macaulay normal domain.
\end{theorem}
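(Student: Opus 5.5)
This theorem follows by combining Proposition~\ref{prop:example-local-reduction} and Proposition~\ref{prop:example-normality}; only the passage to the Rees algebra remains. Since $S$ is a domain, $\calR(I)=S[IT]\subseteq S[T]$ is a domain.

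Cohen--Macaulayness has already been established. Proposition~\ref{prop:example-local-reduction} verifies the equality $I^2=QI+\m I^2$ for the three-generated ideal $Q=(g_1,g_4+g_5,g_3+g_6+g_7)$. Proposition~\ref{prop:local-reduction-criterion} then applies: Nakayama gives a reduction $QS_\m$ of $IS_\m$ with reduction number one. Valabrega--Valla and Goto--Shimoda give Cohen--Macaulayness of $\calR_{S_\m}(IS_\m)$. Localizing at primes not lying over $\m$ yields polynomial rings $S_\fkp[T]$, which are Cohen--Macaulay, so $\calR(I)$ is Cohen--Macaulay.

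For normality, I would use the standard fact recalled in the Introduction. Since $S$ is a normal domain, the integral closure of $\calR(I)$ in its fraction field is $\bigoplus_{n\geq0}\ol{I^n}T^n$. This holds because $S[T]$ is normal and the integral closure of $\calR(I)$ in $S[T]$ is this graded ring; moreover $\calR(I)$ and $S[T]$ have the same fraction field. By Proposition~\ref{prop:example-normality}, $\ol{I^n}=I^n$ for all $n\geq1$. Hence $\calR(I)$ equals its normalization, so it is a normal domain.

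The only substantive steps are the two propositions. Of these, the main obstacle was the normality argument. There the reduction via $f=yz$ to the normal monomial ideal $J=I_{24}$ (normal by \cite{AtakaMatsuoka2026}) and the two-parameter separating map $\psi_{\varepsilon_1,\varepsilon_2}$ had to kill all coefficients $c_\lambda$ in $\omega$. Since both propositions are taken as established, the present theorem is a formal consequence.
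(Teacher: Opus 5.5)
Your proposal is correct and takes the same route as the paper, which obtains the theorem directly by combining Proposition~\ref{prop:example-local-reduction} (Cohen--Macaulayness via Proposition~\ref{prop:local-reduction-criterion}) with Proposition~\ref{prop:example-normality}. You also spell out the standard step the paper leaves implicit: $\calR(I)$ and $S[T]$ have the same fraction field, so the normalization of $\calR(I)$ is $\bigoplus_{n\geq0}\ol{I^n}T^n$, and hence normality of $I$ gives normality of $\calR(I)$.
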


\section{Further questions}
\label{sec:further-questions}

We conclude with several questions that are not settled in this paper.
Whenever $H$ is a numerical semigroup considered below, $\ell$ denotes the
least weighted degree of a nonzero homogeneous element of the defining ideal
$\fkp_H$ of $k[H]$.

\begin{question}
\label{que:seven-from-semigroups}
	Are all the seven-generated ideals $I_{\ell+1}$ arising
	from non-symmetric numerical semigroups of multiplicity four and embedding
	dimension three normal?  Is the same true for those arising from symmetric
	numerical semigroups of multiplicity five and embedding dimension three?
\end{question}

For the next question, let $H$ be a numerical
semigroup of embedding dimension three and arbitrary multiplicity.  Since
$I_\ell$ is an integrally closed monomial ideal, it is normal by
\cite[Theorem~3.1]{AtakaMatsuoka2026} whenever $\mu_S(I_\ell)\leq7$.

\begin{question}
	Suppose that $\mu_S(I_\ell)\leq7$ and
	$\mu_S(I_{\ell+1})\leq7$.  Must $I_{\ell+1}$ be normal?
\end{question}

The preceding question concerns only the passage from $I_\ell$ to
$I_{\ell+1}$.  The following formulation asks the same at every step.
Let $H$ be a numerical semigroup of embedding dimension three, and retain
the notation $S$, $\varphi_H$, and
\[
 I_h=\varphi_H^{-1}\bigl(t^hk[t]\cap k[H]\bigr)
\]
of Section~\ref{sec:target-ideals}.

\begin{question}
\label{que:successive-degree-bounds}
For $h\in H$, set
\[
 \sigma(h)=\min\{n\in H\mid n>h\}.
\]
Suppose that $\mu_S(I_h)\leq7$ and $\mu_S(I_{\sigma(h)})\leq7$.
If $I_h$ is normal, must $I_{\sigma(h)}$ be normal?
\end{question}

The following example shows that normality need not pass to the next
cutoff without restrictions on the number of generators, even when both
ideals are monomial.

\begin{example}\label{ex:successive-cutoffs-eight-generators}
Let $H=\langle6,14,21\rangle$, and give $S=k[x,y,z]$ the grading
$\deg x=6$, $\deg y=14$, and $\deg z=21$.  The defining ideal is
\[
 \fkp_H=(y^3-x^7,\ z^2-x^7),
\]
so $\ell=42$.  Since $41=6+14+21\in H$ and $42\in H$, we have
$\sigma(41)=42$.  The two successive ideals are
\[
\begin{aligned}
 I_{41}&=(x^7,x^5y,x^3y^2,y^3,x^4z,xyz,y^2z,z^2),\\
 I_{42}&=(x^7,x^5y,x^3y^2,y^3,x^4z,x^2yz,y^2z,z^2).
\end{aligned}
\]
Both are integrally closed monomial ideals with eight minimal generators.
A direct computation with the Newton polyhedron of $I_{41}^2$ shows
that $I_{41}^2$ is integrally closed.  Hence $I_{41}$ is normal by
\cite[Proposition~3.1]{ReidRobertsVitulli2003}.

On the other hand, $I_{42}=\ol{(x^7,y^3,z^2)}$ is not normal.
Indeed, put $w=x^6y^2z$.  Then $w\notin I_{42}^2$, whereas
\[
 w^2=(x^7)(x^5y)(y^3)(z^2)\in I_{42}^4.
\]
Thus $w\in\ol{I_{42}^2}\setminus I_{42}^2$.  In contrast,
$w=(xyz)(x^5y)\in I_{41}^2$.
\end{example}

\section*{Acknowledgments}
This work grew out of discussions with the late Professor Shiro Goto.
Together, we analyzed a particular example whose behavior led me to the
general results developed in this paper.

\bibliographystyle{amsplain}
\bibliography{refs}

\end{document}